\documentclass[reqno,10pt]{article}

\usepackage{amsmath, amsthm, amsfonts, amssymb,mathrsfs,color}
\usepackage{authblk}

\usepackage{charter}

\usepackage
[
a4paper,
hmargin=1.8cm,
vmargin=1.8cm
]{geometry}

\usepackage{tensor}

\usepackage{hyperref}
\hypersetup{
	colorlinks,
	citecolor=blue,
	filecolor=black,
	linkcolor=blue,
	urlcolor=blue
}

\usepackage{url}

\usepackage{enumitem}

\usepackage[normalem]{ulem}
\usepackage[numbers,sort&compress]{natbib}

\usepackage{indentfirst}

\def\R{\mathbb R}

\def\E{\mathbb E}

\def\p{\mathbb P}

\def\F{\mathcal F}

\def\D{\mathcal D}

\def\Q{\mathcal Q}

\def\d{{\,\rm{d}}}

\def\I{{\mathbf I}}

\def\H{\mathcal H}

\def\pas{{\mathbb P}\text{-}{\rm a.s.}}
\def\pv{\text{p.v.}}

\numberwithin{equation}{section}

\newtheorem{Theorem}{Theorem}[section]

\newtheorem{Lemma}{Lemma}[section]

\newtheorem{Definition}{Definition}[section]

\newtheorem{Hypothesis}{Assumption}[section]

\theoremstyle{definition}
\newtheorem{Example}{Example}[section]
\newtheorem{Remark}{Remark}[section]

\newcommand{\IP}[1]{\left\langle#1\right\rangle }
\newcommand{\bIP}[1]{\big\langle#1\big\rangle }

\begin{document}
 
	\allowdisplaybreaks

	\title{\textbf{{Pseudo-differential noise and nonlocal singularity formation in the stochastic C\'ordoba--C\'ordoba--Fontelos equation}}}

	\author{\textbf{Diego Alonso-Or\'an}}
	\affil{{\small Departamento de An\'alisis Matem\'atico, Universidad de La Laguna, Astrof\'isico Francisco S\'anchez s/n, 38271 La Laguna, Spain. \href{mailto:dalonsoo@ull.edu.es}{dalonsoo@ull.edu.es}}}
	
	\author{\textbf{Rafael Granero-Belinch\'on}}
	\affil{{\small Departamento de Matem\'aticas, Estad\'istica y Computaci\'on, Universidad de Cantabria. Avda. Los
			Castros s/n, Santander, Spain. \href{mailto:rafael.granero@unican.es}{rafael.granero@unican.es}}}

	\author{\textbf{Yingting Miao}}
	\affil{{\small School of Mathematics and Physics, Xi'an Jiaotong-Liverpool University, 
			Suzhou 215123, China. \href{mailto:Yingting.Miao@xjtlu.edu.cn}{Yingting.Miao@xjtlu.edu.cn}}}
	
	\author{\textbf{Hao Tang}}
	\affil{{\small Center for Applied Mathematics, Tianjin University, Tianjin 300072, China. \href{mailto:haotang@tju.edu.cn}{haotang@tju.edu.cn}}}

	\date{\today}
	
	\maketitle

	\begin{abstract}
		We study the stochastic C\'ordoba--C\'ordoba--Fontelos equation driven by multiplicative Stratonovich noise. The noise amplitude is allowed to be a pseudo-differential operator whose leading part is nearly skew-adjoint. This class contains classical transport noise and also permits genuinely nonlocal perturbations. We first develop a local-in-time theory for maximal classical solutions in Sobolev spaces, proving existence, uniqueness, and a blow-up criterion. 
		We then consider the special case of Stratonovich transport. For sufficiently
		large initial nonlocal steepness at a global maximum, we prove finite-time
		blow-up with arbitrarily high prescribed probability and obtain an
		explicit upper bound on the lifespan. Finally, on the event that the
		nonlocal steepness at the transported maximum diverges, we establish a
		conditional Type-I upper bound. When the terminal Ces\`aro average of the
		normalised nonlocal energy converges, we further identify the exact
		leading-order blow-up rate in terms of its limiting value.
	\end{abstract}

	\noindent\textbf{Keywords:} Stochastic C\'ordoba--C\'ordoba--Fontelos equation; Pseudo-differential noise; finite-time blow-up; blow-up rate.
	
	\medskip
	\noindent\textbf{2020 Mathematics Subject Classification:} 60H15; 35Q35.

	\tableofcontents

	\section{Introduction}\label{Section:introduction}

	We consider the following nonlocal transport equation on the real line:
	\begin{equation}\label{eq:deterministic-CCF}
		\partial_t v+(\mathcal{H} v)\partial_xv=0,
		\quad t>0,\quad x\in\mathbb{R},
	\end{equation}
	where the Hilbert transform is defined by the principal-value integral
	\[
	\mathcal{H}f(x)
	=
	\frac{1}{\pi}\pv\int_{\mathbb{R}}\frac{f(y)}{y-x} \,\mathrm{d}y.
	\]
	Equation~\eqref{eq:deterministic-CCF} was introduced by C\'ordoba,
	C\'ordoba, and Fontelos as a one-dimensional model retaining several
	nonlocal mechanisms that arise in active-scalar and fluid-interface
	dynamics, see\cite{Cordoba-Cordoba-Fontelos-2005-Annals}.  It is now  commonly
	referred to as the C\'ordoba--C\'ordoba--Fontelos (CCF) equation.
	
	\subsection{Motivation: robustness of nonlocal characteristic-compression under noise}
	
	Several structural features distinguish the CCF equation from local
	one-dimensional transport models. Let $X(t,x)$ denote the characteristic
	flow associated with a sufficiently regular solution $v$:
	\begin{equation}\label{eq:deterministic-characterize}
		\frac{\mathrm{d}}{\mathrm{d}t}X(t,x)
		=
		\mathcal{H} v(t,X(t,x)),
		\quad
		X(0,x)=x.
	\end{equation}
	As long as the flow remains well-defined and invertible,
	\[
	v(t,X(t,x))=v_0(x).
	\]
	Thus, the range of $v$ is preserved along the characteristics. The
	loss-of-regularity mechanism is therefore associated with the compression and
	degeneration of the characteristic map, rather than with the growth of the
	scalar amplitude. This is analogous to the inviscid Burgers equation.
	In the Burgers equation, however, the transport velocity is locally determined by the
	solution itself, whereas in \eqref{eq:deterministic-CCF}, it is generated
	nonlocally via the zeroth-order singular integral \(\mathcal{H}\).
	
	The connection with nonlocal vortex-stretching models becomes apparent
	after spatially differentiating the equation. Setting $g=\partial_xv$, we obtain
	\[
	\partial_tg+(\mathcal{H}v)\partial_xg
	=
	-(\mathcal{H}g)g.
	\]
	Equivalently, if $H_0=-\mathcal{H}$ denotes the opposite Hilbert-transform
	convention, then
	\[
	\partial_tg-(H_0v)\partial_xg=gH_0g.
	\]
	The differentiated equation therefore contains the stretching term of the
	Constantin--Lax--Majda model
	\cite{Constantin-Lax-Majda-1985-CPAM,Okamoto-etal-2008-Nonlinearity},
	coupled with a nonlocal transport drift. It is also structurally related to
	the De Gregorio model \cite{DeGregorio-1990-JSP,DeGregorio-1996-MMAS}, although the relation
	between the transported quantity and the velocity is different.
	
	The CCF equation \eqref{eq:deterministic-CCF} also provides a useful
	one-dimensional comparison with the surface quasi-geostrophic (SQG)
	equation \cite{Constantin-etal-1994-Nonlinearity,Constantin-Wu-1999-SIMA}.
	In both models, the transport velocity is recovered from the advected
	scalar through a zeroth-order singular integral. The distinction between
	the two models is most transparent at the level of their characteristic
	flows. Let \(X(t,x)\) denote the flow satisfying
	\eqref{eq:deterministic-characterize}, and  define  its spatial Jacobian by
	\(\mathfrak J_X(t,x) \triangleq \partial_xX(t,x)\). Differentiating the flow equation
	with respect to the initial label \(x\) and  using  the identity
	$\partial_x \mathcal{H} = -\Lambda$, we obtain
	\begin{equation}\label{eq:compressible-of-Hv}
		\partial_t\mathfrak J_X(t,x)
		=
		-\Lambda v(t,X(t,x))\mathfrak J_X(t,x),
		\implies
		\mathfrak J_X(t,x)
		=
		\exp\left(
		-\int_0^t
		\Lambda v(t',X(t',x))\,\mathrm{d}t'
		\right).
	\end{equation}
	Here, $\Lambda$  denotes the fractional Laplacian
	$$\Lambda \triangleq (-\partial_x^2)^{1/2},$$
	and \(\mathfrak J_X(t,x)\) measures the local separation of neighbouring
	characteristics. If \(x_0\) is a global maximum point of \(v_0\), then
	\(X(t,x_0)\) remains a global maximum point of \(v(t,\cdot)\) for as
	long as the classical flow exists. The maximum principle for the fractional Laplacian implies
	\[
	\Lambda v(t,X(t,x_0))\ge 0,
	\qquad
	\text{which yields} \quad \partial_t\mathfrak J_X(t,x_0)\le 0.
	\]
	Thus, neighbouring CCF characteristics are locally drawn together near
	the advected maximum. This exact relation details the sense in which the CCF dynamics
	exhibits a \emph{nonlocal characteristic-compression mechanism}.
	
	For the SQG equation, by contrast, the velocity field is divergence-free, and its
	characteristic flow \(X_{\mathrm{SQG}}\) is area-preserving:
	\[
	\det \nabla X_{\mathrm{SQG}}(t,x)=1.
	\]
	Consequently, any contraction in one direction must be compensated by an expansion in a transverse direction. In the one-dimensional
	CCF model, \(\mathfrak J_X=\partial_xX\) is the full Jacobian of the flow. Its decrease directly represents a loss of separation between  neighbouring
	characteristics, with no transverse dimension available to compensate for the compression.
 For the blow-up of solutions in related nonlocal models, we refer to \cite{Li-Rodrigo-2008-Adv,Li-Rodrigo-2020-Adv,Dong-2014-Adv}.

	The significance of the CCF equation therefore lies not so much in the bare
	fact that finite-time singularities may occur as in the clarity
	of this nonlocal characteristic-compression mechanism and its
	structural links to Euler vorticity dynamics, Birkhoff--Rott
	vortex-sheet dynamics, the Constantin--Lax--Majda/De Gregorio
	models, and the critical SQG dynamics. In this sense, the CCF equation provides a
	\emph{canonical minimal setting} for studying the finite-time loss of
	\(C^1\) regularity driven by the convergence of  neighbouring
	characteristics, despite the conservation of the scalar range.
	
	It is precisely this structural simplicity that makes the CCF equation a natural benchmark for testing the robustness of this characteristic-compression mechanism under
	random perturbations. In realistic physical systems, reduced descriptions
	of fluid flows leave certain spatial and temporal scales
	unresolved, rendering the system subject to model uncertainty. Such effects are often
	represented through stochastic perturbations. This leads to the following
	robustness question:
	\begin{quote}
		\textbf{Robustness question:} \emph{Which features of the deterministic nonlocal characteristic-compression mechanism persist under random perturbations, and how does the noise alter the analytic and singularity-forming structure of the CCF dynamics?}
	\end{quote}
	
	Stochastic versions of the CCF equation have been studied in several
	settings. Alonso, Rohde, and Tang considered a stochastic CCF equation on
	the torus driven by Lie-transport noise and established local pathwise
	well-posedness together with a blow-up criterion, see
	\cite{Alonso-Rohde-Tang-2021-JNLS}. Alonso, Miao, and Tang studied It\^o
	multiplicative noise on the real line, see
	\cite{Alonso-Miao-Tang-2022-JDE}. In addition to local pathwise
	well-posedness and a blow-up criterion, they proved almost-sure global
	existence for certain sufficiently strong nonlinear noises under suitable
	structural and growth conditions. They also showed that, for a class of
	linear It\^o multiplicative noises and suitable initial data, finite-time
	singularity formation persists with positive probability.
	
	The present work extends the related results on the stochastic CCF equation in
	\cite{Alonso-Rohde-Tang-2021-JNLS} from local first-order differential
	operators to a class of spatially nonlocal pseudo-differential operators.
	It also complements the finite-time singularity result for linear It\^o
	multiplicative noise in \cite{Alonso-Miao-Tang-2022-JDE} by treating
	Stratonovich transport noise. Our two principal objectives are to establish
	a local theory for general admissible pseudo-differential noise and, in the
	transport-noise case, to study the corresponding nonlocal characteristic
	contraction  
	mechanism.
	
	\subsection{Stratonovich pseudo-differential noise}\label{Section:OPS noise}
	
	Let $
	(\Omega,\mathcal F,(\mathcal F_t)_{t\ge0},\mathbb P)$
	be a filtered probability space satisfying the usual conditions. Let $W$
	be a standard real-valued $(\mathcal F_t)_{t\ge0}$-Brownian motion. We write
	$\circ \d W(t)$ for Stratonovich integration.
	
	A classical source of transport noise is a stochastic Lagrangian flow of
	the form
	\[
	\mathrm{d}X(t,x)
	=
	h(t,X(t,x)) \d t
	+
	\sigma(t,X(t,x))\circ{\rm d}W(t),
	\quad
	X(0,x)=x;
	\]
	see, for example,
	\cite{Mikulevicius-Rozovskii-2004-SIAM,
		Mikulevicius-Rozovskii-2005-AoP,Holm-2015-ProcA}. If $V$ is transported as a
	scalar, the conservation law
	\[
	V(t,X(t,x))=V_0(x)
	\]
	formally leads to
	\[
	\mathrm{d} V+h\partial_xV \d t
	+\sigma\partial_xV\circ{\rm d}W(t)=0.
	\]
	If $\rho$ is transported as a density, the conserved quantity is instead
	\[
	\rho(t,X(t,x))\partial_xX(t,x)=\rho_0(x),
	\]
	and the corresponding stochastic continuity equation is
	\[
	\mathrm{d}\rho+\partial_x(h\rho) \d t
	+\partial_x(\sigma\rho)\circ{\rm d}W(t)=0.
	\]
	The precise first-order noise operator therefore depends on the physical
	quantity being transported.
	
	Abstracting this structure, we 
	refer to noise of the form
	\begin{equation*}
		\mathcal Qv \circ{\rm d}W(t),
		\quad
		\mathcal Q=a(x)\partial_x+b(x),
	\end{equation*}
	as \emph{transport-type noise}. Following the contributions in
	\cite{Mikulevicius-Rozovskii-2004-SIAM,
		Mikulevicius-Rozovskii-2005-AoP,Holm-2015-ProcA}, stochastic partial
	differential equations (SPDEs) driven by transport-type noise have been
	studied extensively, and substantial progress has been achieved.
	For incompressible
	fluid models, see, for example,
	\cite{Flandoli-Luo-2020-AoP,
		Flandoli-Luo-2021-PTRF,
		Crisan-Flandoli-Holm-2019-JNLS,
		Flandoli-Lisei-2004-SAA}.
	For shallow-water models, see
	\cite{Holden-Karlsen-Pang-2021-JDE,
		Holden-Karlsen-Pang-2023-DCDS,
		Albeverio-etal-2021-JDE,
		Galimberti-etal-2024-JDE,Crisan-Holm-2018-PhyD,Alonso-Rohde-Tang-2021-JNLS};
	for compressible fluid models, see
	\cite{Breit-etal-2022-SIMA,
		Karlsen-Tang-Wang-2026-arXiv}.
	
	In the context of advanced turbulence
	modelling, however, unresolved complex interactions frequently couple distant spatial
	regions or transfer energy across diverse Fourier scales. This provides an
	analytical motivation
	for allowing
	random perturbations to act through spatially nonlocal operators.
	This perspective aligns with contemporary approaches in anomalous transport and turbulence phenomenology, where local Eulerian dynamics are influenced by the global spatial domain, see, for
	example,
	\cite{Majda-Gershgorin-2013-PTRSL,
		Bernard-Erinin-2018-JFM,
		Hamba-2022-JFM}.
	
	Motivated by these considerations, we study the stochastic Cauchy problem
	\begin{equation}\label{eq:sccf}
		\mathrm{d} v+(\mathcal Hv)\partial_xv \d t
		=
		\mathcal Qv\circ{\rm d}W(t),
		\quad
		v(0)=v_0,
		\quad
		x\in\mathbb R,
	\end{equation}
	where  $\mathcal Q$ is \textit{a deterministic linear
		pseudo-differential operator}
	satisfying the structural assumptions stated
	in Section~\ref{sec:notation-background-results}. The admissible operators have a leading part that is
	skew-adjoint up to a lower-order perturbation. The class contains suitable
	local first-order operators of the form
	$\mathcal Q=a(x)\partial_x+b(x)$ and also genuinely nonlocal operators.

	We must emphasise that a
	general pseudo-differential noise is not generated by a pointwise Lagrangian
	stochastic flow.  Accordingly, when $\mathcal Q$ is genuinely
	nonlocal, equation~\eqref{eq:sccf} should be viewed as an analytic nonlocal
	extension of transport-type noise rather than as a classical Lagrangian
	transport model. In this case the equation couples two different nonlocal
	mechanisms: the Hilbert transform generates the nonlinear transport velocity
	and the characteristic-flow relation~\eqref{eq:compressible-of-Hv}, whereas
	$\mathcal Q$ determines the spatial action of the stochastic perturbation.
	
	Because $\mathcal Q$ may be unbounded, the Stratonovich notation in
	\eqref{eq:sccf} requires a precise interpretation. Indeed, noticing that
	\begin{equation*}
		\mathcal Qv\circ{\rm d}W(t)
		=
		\mathcal Qv \d W(t)
		+\frac12\mathcal Q^2v \d t,
	\end{equation*}
	then
	\eqref{eq:sccf} can be interpreted as
	\begin{equation}\label{Target problem SCCF}
		v(t)-v_0
		+
		\int_0^t
		\left[
		\mathcal Hv\,\partial_xv
		-
		\frac12\mathcal Q^2v
		\right](t')\d t'
		=
		\int_0^t \mathcal Qv(t')\d W(t'),
		\quad
		v(0)=v_0 .
	\end{equation}

	If $\mathcal Q$ has order $\delta_{\mathcal Q}>0$, then its
	pseudo-differential mapping properties give, formally,
	\[
	\mathcal Q:H^s\longrightarrow H^{s-\delta_{\mathcal Q}},
	\quad
	\mathcal Q^2:H^s\longrightarrow H^{s-2\delta_{\mathcal Q}},
	\]
	whereas
	\[
	(\mathcal Hv)\partial_xv\in H^{s-1}
	\]
	for $v\in H^s$. Thus the drift and diffusion terms naturally
	live in lower Sobolev spaces $H^{s-\max\{1, 2\delta_{\mathcal Q}\}}$, even though the solution is constructed in
	$H^s$. This derivative loss prevents a direct application of the standard
	theory for stochastic differential equations (SDEs) taking values in a
	separable Hilbert space.  For instance,  applying It\^{o}'s formula to \eqref{Target problem SCCF} generates singular quantities such as
	\begin{equation}\label{Q singular terms}
		\langle \mathcal{Q} v,v \rangle_{H^s}, \quad 
		\langle \mathcal{Q}^2 v,v \rangle_{H^s}, \quad 
		\text{and} \quad 
		\langle \mathcal{Q} v, \mathcal{Q} v \rangle_{H^s}.
	\end{equation}
	Controlling the terms in \eqref{Q singular terms} purely by $\|v\|_{H^s}^2$ is far from straightforward, especially when $\Q$ is of positive order. See also Remark \ref{Remark-local-theory-method} for further discussion.

	To the best of our knowledge, the systematic Sobolev theory essential for analysing nonlinear fluid SPDEs driven by positive-order Stratonovich pseudo-differential noise was first introduced by Tang and Wang in \cite{Tang-Wang-2022-arXiv}, and further developed in \cite{Tang-2023-JFA}.
	Subsequent advances and broader applications can be found in \cite{Alonso-Pang-Tang-2026-JLMS, Karlsen-Tang-Wang-2026-arXiv,Miao-2025-ZAMP}.

	\subsection{Nonlocality in the singularity formation}
	
	Stochastic perturbations, including transport noise, may affect singularity
	formation in qualitatively different ways. In some systems they suppress
	exceptional collisions or extend the lifespan with high probability; in
	others the deterministic breakdown mechanism persists with positive
	probability or almost surely. We recall a few representative examples.
	
	For the finite-dimensional point-vortex dynamics associated with the
	two-dimensional Euler equation, Flandoli, Gubinelli, and Priola proved that
	a generic stochastic perturbation compatible with the Eulerian description
	prevents vortex coalescence and yields global well-posedness for every
	initial configuration, see\cite{Flandoli-Gubinelli-Priola-2011-SPA}. In a
	high-frequency scaling regime, transport noise was shown in
	\cite{Flandoli-Galeati-Luo-2021-CPDE} to give long-time existence with high
	probability for several nonlinear PDEs that may otherwise develop
	finite-time singularities. For the three-dimensional Navier--Stokes
	equation, sufficiently strong high-mode transport noise yields vorticity
	bounds and high-probability well-posedness, see
	\cite{Flandoli-Luo-2021-PTRF}.
	
	Singularity formation may nevertheless persist under stochastic
	perturbations. For the inviscid stochastic Burgers equation with affine
	transport coefficient,
	\[
	\mathrm{d}u+u\partial_xu \d t
	=
	\xi(x)\partial_xu\circ{\rm d}W(t),
	\quad
	\xi(x)=a_{\mathrm{aff}}x+b_{\mathrm{aff}},
	\quad
	a_{\mathrm{aff}},\ b_{\mathrm{aff}}\in\mathbb R,
	\]
	it was proved in \cite{Alonso-etal-2019-NODEA} that, when the initial
	profile has a negative slope, two stochastic characteristics intersect in
	finite time almost surely. Wave breaking also persists in stochastic
	Camassa--Holm-type equations. Positive-probability wave breaking for the
	Camassa--Holm equation with Lie-transport noise was established in
	\cite{Crisan-Holm-2018-PhyD}. In a different setting, Rohde and Tang proved
	positive-probability wave breaking for a class of stochastic
	Dullin--Gottwald--Holm equations, including the Camassa--Holm equation,
	driven by non-autonomous linear It\^o multiplicative noise, see
	\cite{Rohde-Tang-2021-NoDEA}.
	
	A common feature of the preceding Burgers and Camassa--Holm analyses is the
	reduction of singularity formation to a scalar steepness observable tied to
	the Lagrangian or extremal geometry of the solution. Depending on the
	model, this observable may describe the separation of stochastic
	characteristics, the slope at a moving inflection point, or the minimum
	spatial slope.
	
	For the stochastic CCF equation, the local-in-time theory established in
	this paper (see Theorem \ref{Thm-local-solution}) gives the blow-up criterion
	\begin{equation*}
		\limsup_{t\uparrow\tau^*}
		\left(
		\|\partial_xv(t)\|_{L^\infty}
		+
		\|\Lambda v(t)\|_{L^\infty}
		\right)
		=\infty
		\quad
		\text{a.s. on}\ \{\tau^*<\infty\}.
	\end{equation*} 
	This criterion alone does not determine whether breakdown is caused by the
	local slope $\partial_xv$, the nonlocal velocity strain
	$\Lambda v=-\partial_x\mathcal Hv$, or both. Our singularity argument
	isolates the second quantity at a transported maximum point. Since the ability to follow such a maximum point is essential to our
	argument, and a
	general pseudo-differential perturbation need not admit a pointwise
	stochastic-flow representation, the finite-time singularity analysis is
	restricted to the scalar transport-noise equation
	\begin{equation}\label{eq:sccf-transport}
		\d v+(\mathcal Hv)\partial_xv \d t
		=
		\sigma(x)\partial_xv\circ{\rm d}W(t),
		\quad
		v(0)=v_0.
	\end{equation}
	For this noise, scalar values are preserved along stochastic
	characteristics.
	
	Let $x_0$ be a global  maximiser of the initial data,
	\[
	v_0(x_0)=\max_{x\in\mathbb R}v_0(x),
	\]
	and let $z_0(t)$ solve
	\begin{equation*}
		\d z_0(t)
		=
		(\mathcal Hv)(t,z_0(t)) \d t
		-
		\sigma(z_0(t))\circ{\rm d}W(t),
		\quad
		z_0(0)=x_0.
	\end{equation*}
	Under the assumptions of the blow-up theorem, the stochastic flow remains
	a diffeomorphism for $t<\tau^*$, and hence
	\[
	v(t,z_0(t))
	=
	v_0(x_0)
	=
	\max_{x\in\mathbb R}v(t,x),
	\quad
	t<\tau^*
	\quad\pas
	\]
	Thus $z_0(t)$ is the stochastic image of a fixed initial   maximiser, rather
	than a  maximiser selected independently at each time.
	
	We study the nonlocal quantity
	\[
	M(t)=(\Lambda v)(t,z_0(t)).
	\]
	For nonconstant $v_0$, since the scalar values are preserved by the stochastic flow,
	$v$ remains nonconstant for $t<\tau^*$,  the singular-integral
	representation of $\Lambda$ implies
	\[
	M(t)>0,
	\quad
	t<\tau^*
	\quad\pas
	\]
	Applying $\Lambda$ to \eqref{eq:sccf-transport} and evaluating the resulting
	stochastic field along $z_0(t)$ via the It\^o--Wentzell formula gives
	\begin{equation}\label{eq:M-Stratonovich}
		\d M(t)
		=
		-[\Lambda,\mathcal Hv]\partial_xv(t,z_0(t)) \d t
		+
		[\Lambda,\sigma]\partial_xv(t,z_0(t))\circ{\rm d}W(t),
	\end{equation}
	where, for a scalar coefficient $g$,
	\[
	[\Lambda,g]f
	=
	\Lambda(gf)-g\Lambda f.
	\]
	The drift retains the deterministic nonlocal compression mechanism. At the
	transported maximum point it satisfies a coercive estimate whose leading
	term is quadratic in $M(t)$, producing the Riccati-type growth underlying
	the blow-up argument.
	
	The stochastic term introduces a distinct difficulty. Unlike the local
	slope equation in Burgers-type models, transport noise acting on
	$\Lambda v$ produces a fractional commutator. Under the assumptions on
	$\sigma$, we prove the pointwise decomposition (see \eqref{eq:Gamma-decomp} below)
	\begin{equation}\label{eq:Lambda-sigma-decomposition}
		[\Lambda,\sigma]\partial_xv
		=
		\sigma'\Lambda v+\mathcal R_\sigma[v],
	\end{equation}
	where the lower-order remainder satisfies (see \eqref{eq:R-L-infty} below)
	\begin{equation*}
		\|\mathcal R_\sigma[v(t)]\|_{L^\infty}
		\le
		C_\sigma\|v_0\|_{L^\infty},
		\quad
		t<\tau^*
		\quad\pas
	\end{equation*}
	Because the stochastic coefficient in \eqref{eq:M-Stratonovich} depends on
	both the random field $v$ and the random evaluation point $z_0(t)$, the
	Stratonovich--It\^o correction must be computed for the coupled
	field--characteristic system. After localisation, we apply It\^o's formula
	to the reciprocal process
	\[
	Y(t)=\frac{1}{M(t)}.
	\]
	The coupled correction, the commutator decomposition
	\eqref{eq:Lambda-sigma-decomposition}, and the coercive drift estimate yield
	a closed one-sided inequality for $Y$, corresponding to a stochastic
	Riccati mechanism for $M$.
	
	Consequently,  for every $\varepsilon\in(0,1)$, there exists
	\[
	C^*=C^*(\sigma,\|v_0\|_{L^\infty},\varepsilon)>0
	\]
	such that, whenever $\Lambda v_0(x_0)>C^*$,
	\begin{equation*}
		\mathbb P\left(\tau^*\le\frac{8}{\Lambda v_0(x_0)}\right)
		\ge
		1-\varepsilon.
	\end{equation*}
	See Theorem \ref{Thm-blowup} and Remark \ref{Remark-blowup-novelty}.

	The same reciprocal process also gives a conditional rate theorem.  On the
	event that $M(t)$ itself diverges as $t\uparrow\tau^*$, we discover that it is
	the normalised nonlocal energy
	\[
	\eta(t)
	=
	\frac{
		\mathcal E[v(t,\cdot)](z_0(t))
	}{
		M(t)^2
	}
	\ge0
	\]
	that measures the contribution of the global scalar profile to the
	compression observed at the transported maximum. We first prove the
	conditional Type-I bound
	\[
	\limsup_{t\uparrow\tau^*}
	M(t)(\tau^*-t)
	\le2.
	\]
	If the terminal Ces\`aro average of $\eta$ has a strictly positive lower
	limit, then this universal upper bound is strictly improved. If, more
	precisely, the Ces\`aro average converges to a finite limit $\eta_*$, then
	the exact first-order asymptotics are given by
	\[
	M(t)(\tau^*-t)
	\longrightarrow
	\frac1{\frac12+\eta_*}.
	\]
	Thus the explicit stochastic integrating-factor contribution is
	asymptotically neutral at leading order. Any residual dependence of the
	first-order blow-up coefficient on the noise is encoded indirectly through
	the terminal Ces\`aro behaviour of the normalized nonlocal energy. When
	this average converges, its limit completely determines the asymptotic
	coefficient; see Theorem~\ref{Thm-blowup-rate} and Remark \ref{Remark-blowup-rate-novelty}.
	
	The use of a transported random   maximiser, the pointwise
	commutator decomposition, and the Stratonovich--It\^o correction
	for the coupled field--characteristic system constitute the main new
	ingredients  of our proof.

	\section{Notation, background and main results}
	\label{sec:notation-background-results}
	
	\subsection{Function spaces and related notation}
	
	Let $\mathscr S(\mathbb R;\mathbb R)$ denote the Schwartz space of
	real-valued rapidly decreasing $C^\infty$ functions on $\mathbb R$, and let
	$\mathscr S'(\mathbb R;\mathbb R)$ denote the space of real-valued tempered
	distributions. For $1\le p<\infty$, $L^p(\mathbb R;\mathbb R)$ is the
	standard Lebesgue space of real-valued measurable functions, and
	$L^\infty(\mathbb R;\mathbb R)$ is the space of essentially bounded
	real-valued functions. The $L^2$ inner product is
	\[
	\langle f,g\rangle_{L^2}
	\triangleq
	\int_{\mathbb R}f(x)g(x) \d x.
	\]
	
	Let $\mathrm i=\sqrt{-1}$. We use the Fourier transform convention
	\[
	(\mathscr Ff)(\xi)
	\triangleq
	\int_{\mathbb R}f(x){\rm e}^{-\mathrm ix\xi} \d x,
	\quad
	(\mathscr F^{-1}f)(x)
	\triangleq
	\frac{1}{2\pi}\int_{\mathbb R}f(\xi){\rm e}^{\mathrm ix\xi} \d\xi.
	\]
	For $s\in\mathbb R$, the Bessel potential operator
	$\mathcal D^s=(\I-\partial_x^2)^{s/2}$ is defined by
	\begin{equation*}
		\mathscr F(\mathcal D^sf)(\xi)
		\triangleq
		(1+|\xi|^2)^{s/2}\mathscr Ff(\xi).
	\end{equation*}
	The real Sobolev space $H^s(\mathbb R;\mathbb R)$ is
	\[
	H^s(\mathbb R;\mathbb R)
	\triangleq
	\left\{
	f\in\mathscr S'(\mathbb R;\mathbb R):
	\mathcal D^sf\in L^2(\mathbb R;\mathbb R)
	\right\},
	\]
	with norm and inner product
	\[
	\|f\|_{H^s}
	\triangleq
	\|\mathcal D^sf\|_{L^2},
	\quad
	\langle f,g\rangle_{H^s}
	\triangleq
	\langle\mathcal D^sf,\mathcal D^sg\rangle_{L^2}.
	\]
	For $s\ge0$, this is equivalently the completion of
	$C_0^\infty(\mathbb R;\mathbb R)$ with respect to the $H^s$ norm.
	
	The Hilbert transform $\mathcal H$ and the operator
	$\Lambda=(-\partial_x^2)^{1/2}$ are defined by
	\begin{equation*}
		[\mathscr F(\mathcal Hf)](\xi)
		=
		\mathrm i\,\mathrm{sgn}(\xi)[\mathscr Ff](\xi),
		\quad
		[\mathscr F(\Lambda f)](\xi)
		=
		|\xi|[\mathscr Ff](\xi).
	\end{equation*}
	For sufficiently regular functions, their singular-integral
	representations are
	\begin{equation}\label{eq: Hilbert Lambda kernel}
		\mathcal Hf(x)
		=
		\frac{1}{\pi}\,\pv\int_{\mathbb R}\frac{f(y)}{y-x} \d y,
		\quad
		\Lambda f(x)
		=
		\frac{1}{\pi}\,\pv\int_{\mathbb R}
		\frac{f(x)-f(y)}{(x-y)^2} \d y.
	\end{equation}
	With these conventions,
	\[
	\Lambda=-\mathcal H\partial_x=-\partial_x\mathcal H.
	\]
	Both $\mathcal H$ and $\Lambda$ commute with Bessel potential operators and
	with spatial derivatives on their natural domains.
	
	For $k\in\mathbb N_0\triangleq\mathbb N\cup\{0\}$, we use the standard
	space $W^{k,\infty}(\mathbb R;\mathbb R)$ with norm
	\[
	\|f\|_{W^{k,\infty}}
	\triangleq
	\sum_{\ell=0}^k\|\partial_x^\ell f\|_{L^\infty}.
	\]
	When no ambiguity can arise, we write
	\begin{equation*}
		H^s=H^s(\mathbb R;\mathbb R),
		\quad
		L^p=L^p(\mathbb R;\mathbb R),
		\quad
		W^{k,\infty}=W^{k,\infty}(\mathbb R;\mathbb R).
	\end{equation*}
	We also write $C_b^k(\mathbb R;\mathbb R)$ for the space of functions whose
	derivatives up to order $k$ are bounded and continuous.
	
	For topological spaces $E$ and $\widetilde E$,
	$C(E;\widetilde E)$ denotes the space of continuous maps from $E$ to
	$\widetilde E$. If $\widetilde E$ is a normed space,
	$C_b(E;\widetilde E)$ denotes the subspace of bounded continuous maps. If
	$E$ and $\widetilde E$ are Banach spaces,
	$\mathscr L(E;\widetilde E)$ denotes the space of bounded linear operators
	from $E$ to $\widetilde E$, equipped with the operator norm. Unless stated
	otherwise, $\mathcal A^*$ denotes the $L^2$-adjoint of an operator
	$\mathcal A$, initially defined on $\mathscr S(\mathbb R;\mathbb R)$.
	
	For $a,b\in\mathbb R$, we set
	\[
	a\wedge b\triangleq\min\{a,b\},
	\quad
	a\vee b\triangleq\max\{a,b\},
	\]
	and adopt the convention $\inf\varnothing=\infty$. For a stopping time
	$\tau$ and a Banach space $E$, the notation
	$v\in C([0,\tau);E)$ means that, for almost every $\omega$, the map
	$t\mapsto v(\omega,t)$ is continuous from $[0,\tau(\omega))$ into $E$.
	
	Throughout the paper, $C$ and $c$, possibly with subscripts, denote positive
	constants whose values may change from line to line. Their dependence is
	indicated when relevant. For nonnegative quantities $A$ and $B$, we write
	$A\lesssim B$ if $A\le CB$ for a constant independent of the parameters
	under consideration. For two linear operators $\mathcal A$ and
	$\mathcal B$, their commutator is
	\[
	[\mathcal A,\mathcal B]
	\triangleq
	\mathcal A\mathcal B-\mathcal B\mathcal A.
	\]
	
	\subsection{Structure of the pseudo-differential noise amplitude}
	
	For $m\in\mathbb R$, let
	$\mathcal S^m(\mathbb R\times\mathbb R;\mathbb C)$ denote the H\"ormander
	symbol class $S^m_{1,0}$. More precisely,
	\begin{align}
		\mathcal S^m(\mathbb R\times\mathbb R;\mathbb C)
		\triangleq
		\Bigg\{
		p\in C^\infty(\mathbb R\times\mathbb R;\mathbb C):
		|p|^{k,\ell;m}<\infty
		\text{ for all }k,\ell\in\mathbb N_0
		\Bigg\},
		\label{def Ss R}
	\end{align}
	where
	\[
	|p|^{k,\ell;m}
	\triangleq
	\max_{0\le j\le k,\,0\le r\le\ell}
	\sup_{(x,\xi)\in\mathbb R^2}
	\frac{|\partial_x^j\partial_\xi^r p(x,\xi)|}{(1+|\xi|)^{m-r}}.
	\]
	This is a Fr\'echet space with respect to the seminorms
	$\{|\cdot|^{k,\ell;m}\}_{k,\ell\in\mathbb N_0}$.
	
	The pseudo-differential operator associated with $p$ is defined, initially
	for $f\in\mathscr S(\mathbb R;\mathbb R)$, by
	\begin{equation}\label{OP define}
		[\mathrm{OP}(p)f](x)
		\triangleq
		\frac{1}{2\pi}\int_{\mathbb R}
		{\rm e}^{\mathrm ix\xi}p(x,\xi)[\mathscr Ff](\xi) \d\xi.
	\end{equation}
	We consider only operators that preserve real-valued functions. For the
	quantization in \eqref{OP define}, this is equivalent to
	\begin{equation}\label{Real symbol}
		p(x,-\xi)=\overline{p(x,\xi)}.
	\end{equation}
	We abbreviate
	\begin{equation*}
		\mathcal S^m
		\triangleq
		\left\{
		p\in\mathcal S^m(\mathbb R\times\mathbb R;\mathbb C):
		\eqref{Real symbol}\text{ holds}
		\right\},
		\quad
		\mathrm{OP}\mathcal S^m
		\triangleq
		\{\mathrm{OP}(p):p\in\mathcal S^m\}.
	\end{equation*}
	The corresponding Fourier-multiplier subclasses are
	\begin{equation*}
		\mathcal S_0^m
		\triangleq
		\{p\in\mathcal S^m:p(x,\xi)=p(\xi)\},
		\quad
		\mathrm{OP}\mathcal S_0^m
		\triangleq
		\{\mathrm{OP}(p):p\in\mathcal S_0^m\}.
	\end{equation*}
	A family in $\mathrm{OP}\mathcal S^m$ is called bounded if its symbols form
	a bounded set with respect to all seminorms in \eqref{def Ss R}. The same
	convention applies to $\mathrm{OP}\mathcal S_0^m$.
	
	We now introduce two classes of noise operators. In both classes, the
	principal part is skew-adjoint modulo an operator of order zero.
	
	\begin{Definition}\label{Ak class define}
		Let $\alpha\in[0,1]$. We define
		\[
		\mathbb A^\alpha
		\triangleq
		\left\{
		\mathcal A:\
		\text{there exist }\mathcal L\in\mathrm{OP}\mathcal S^\alpha
		\text{ and }\mathcal G\in\mathrm{OP}\mathcal S^0\text{ such that }
		\mathcal A=\mathcal L+\mathcal G
		\text{ and }
		\mathcal L+\mathcal L^*\in\mathrm{OP}\mathcal S^0
		\right\}.
		\]
		Such a pair $(\mathcal L,\mathcal G)$ is called an admissible decomposition
		of $\mathcal A$. A family $\mathscr N\subset\mathbb A^\alpha$ is called
		bounded if one can choose an admissible decomposition
		$\mathcal A=\mathcal L_{\mathcal A}+\mathcal G_{\mathcal A}$ for every
		$\mathcal A\in\mathscr N$ such that
		\[
		\{\mathcal L_{\mathcal A}:\mathcal A\in\mathscr N\}
		\text{ is bounded in }\mathrm{OP}\mathcal S^\alpha,
		\]
		and both
		\[
		\{\mathcal L_{\mathcal A}+\mathcal L_{\mathcal A}^*:
		\mathcal A\in\mathscr N\},
		\quad
		\{\mathcal G_{\mathcal A}:\mathcal A\in\mathscr N\}
		\]
		are bounded in $\mathrm{OP}\mathcal S^0$.
	\end{Definition}
	
	\begin{Definition}\label{Bk class define}
		Let $\beta\ge0$. We define
		\[
		\mathbb B^\beta
		\triangleq
		\left\{
		\mathcal B:\
		\text{there exist }\mathcal J\in\mathrm{OP}\mathcal S_0^\beta
		\text{ and }\mathcal V\in\mathrm{OP}\mathcal S_0^0\text{ such that }
		\mathcal B=\mathcal J+\mathcal V
		\text{ and }
		\mathcal J+\mathcal J^*\in\mathrm{OP}\mathcal S_0^0
		\right\}.
		\]
		Such a pair $(\mathcal J,\mathcal V)$ is called an admissible decomposition
		of $\mathcal B$. A family $\mathscr N\subset\mathbb B^\beta$ is called
		bounded if one can choose an admissible decomposition
		$\mathcal B=\mathcal J_{\mathcal B}+\mathcal V_{\mathcal B}$ for every
		$\mathcal B\in\mathscr N$ such that
		\[
		\{\mathcal J_{\mathcal B}:\mathcal B\in\mathscr N\}
		\text{ is bounded in }\mathrm{OP}\mathcal S_0^\beta,
		\]
		and both
		\[
		\{\mathcal J_{\mathcal B}+\mathcal J_{\mathcal B}^*:
		\mathcal B\in\mathscr N\},
		\quad
		\{\mathcal V_{\mathcal B}:\mathcal B\in\mathscr N\}
		\]
		are bounded in $\mathrm{OP}\mathcal S_0^0$.
	\end{Definition}
	
	\begin{Example}[Transport noise]\label{Example:transport-noise-1}
		If $\sigma\in\mathscr S(\mathbb R;\mathbb R)$, then
		\[
		\mathcal Q=\sigma(x)\partial_x\in\mathbb A^1.
		\]
		If $\sigma\in\mathbb R$ is constant, then
		\[
		\mathcal Q=\sigma\partial_x\in\mathbb B^1.
		\]
	\end{Example}
	
	\begin{proof}
		For $\sigma\in\mathscr S(\mathbb R;\mathbb R)$, the symbol of
		$\mathcal Q$ is $p(x,\xi)=\mathrm i\sigma(x)\xi$, which belongs to
		$\mathcal S^1$ and satisfies \eqref{Real symbol}. Moreover,
		\[
		(\sigma\partial_x)^*
		=
		-\sigma\partial_x-\sigma',
		\quad
		\mathcal Q+\mathcal Q^*=-\sigma'\in\mathrm{OP}\mathcal S^0.
		\]
		Thus $\mathcal Q\in\mathbb A^1$. If $\sigma$ is constant, then
		$\sigma\partial_x$ is a skew-adjoint Fourier multiplier of order one, and
		hence belongs to $\mathbb B^1$.
	\end{proof}
	
	\begin{Example}[Nonlocal noise amplitudes]
		Recall that $\D^s=(\I-\partial_x^2)^{s/2}$ for $s\in\R$.
		Let $\alpha\in[0,1]$ and $\beta\ge0$. If
		$\sigma\in\mathscr S(\mathbb R;\mathbb R)$, then
		\[
		\mathcal Q
		=
		\sigma(x)\partial_x\D^{\alpha-1}
		\in
		\mathbb A^\alpha.
		\]
		If $\sigma\in\mathbb R$, then
		\[
		\mathcal Q
		=
		\sigma\partial_x\D^{\beta-1}
		\in
		\mathbb B^\beta.
		\]
	\end{Example}
	
	\begin{proof}
		For
		$\mathcal Q=\sigma\partial_x\D^{\alpha-1}$ and
		$f,g\in\mathscr S(\mathbb R;\mathbb R)$,
		\begin{align*}
			\langle\mathcal Qf,g\rangle_{L^2}
			=-\langle f,\D^{\alpha-1}\partial_x(\sigma g)\rangle_{L^2} =\left\langle
			f,
			-\mathcal Qg
			+[\sigma,\D^{\alpha-1}]\partial_xg
			-\D^{\alpha-1}(\sigma'g)
			\right\rangle_{L^2}.
		\end{align*}
		Consequently,
		\[
		\mathcal Q^*+\mathcal Q
		=
		[\sigma,\D^{\alpha-1}]\partial_x-\D^{\alpha-1}\sigma'.
		\]
		Standard pseudo-differential calculus (see
		\cite{Taylor-1974-note,Benzoni-Gavage-Serre-2007-Book})  gives
		\[
		[\sigma,\D^{\alpha-1}]\in\mathrm{OP}\mathcal S^{\alpha-2},
		\quad
		\D^{\alpha-1}(\sigma'\cdot)\in\mathrm{OP}\mathcal S^{\alpha-1}.
		\]
		Since
		$\alpha\le1$, both terms in $\mathcal Q^*+\mathcal Q$ have order at most
		zero. Hence $\mathcal Q\in\mathbb A^\alpha$.
		
		If $\sigma\in\mathbb R$, then
		$\sigma\partial_x\D^{\beta-1}$ is a skew-adjoint Fourier
		multiplier of order $\beta$. It therefore belongs to $\mathbb B^\beta$.
	\end{proof}

	\subsection{Statement of the main results}
	
	We now formulate the structural assumption on the noise amplitude. Recall that the two classes of pseudo-differential operators $\mathbb{A}^\alpha$ and $\mathbb{B}^\beta$ are given by Definitions \ref{Ak class define} and \ref{Bk class define}, respectively.

	\begin{Hypothesis}\label{Hypo-Q}
		Let $\mathcal Q$ be a deterministic, real-preserving linear operator. Choose
		and fix one of the following admissible descriptions:
		\begin{enumerate}[label={\rm (\roman*)},leftmargin=0.79cm]
			\item $\mathcal Q=0$;
			\item there exists $\alpha\in[0,1]$ such that
			$\mathcal Q\in\mathbb A^\alpha$;
			\item there exists $\beta\ge0$ such that
			$\mathcal Q\in\mathbb B^\beta$.
		\end{enumerate}
		In cases \emph{(ii)} and \emph{(iii)}, the selected class and exponent are
		fixed throughout the paper, even if $\mathcal Q$ admits another admissible
		description.
	\end{Hypothesis}
	
	Corresponding to the description selected in Assumption~\ref{Hypo-Q}, set
	\begin{equation}\label{delta-Qi}
		\delta_{\mathcal Q}
		\triangleq
		\begin{cases}
			0,
			&\text{in case \emph{(i)}},\\[3pt]
			\alpha,
			&\text{in case \emph{(ii)}},\\[3pt]
			\beta,
			&\text{in case \emph{(iii)}}.
		\end{cases}
	\end{equation}
	Then, for every $r\in\mathbb R$,
	\[
	\mathcal Q\in\mathscr L(H^r;H^{r-\delta_{\mathcal Q}}),
	\quad
	\mathcal Q^2\in\mathscr L(H^r;H^{r-2\delta_{\mathcal Q}}).
	\]
	If
	\[
	s>\frac{3}{2}+(1\vee 2\delta_{\mathcal Q}),
	\]
	then $s-(1\vee 2\delta_{\mathcal Q})>3/2$ and hence
	\[
	H^{s-(1\vee 2\delta_{\mathcal Q})}
	\hookrightarrow
	C_b^1.
	\]
	Moreover, for $v\in H^s$,
	\[
	(\mathcal Hv)\partial_xv\in H^{s-1},
	\quad
	\mathcal Q^2v\in H^{s-2\delta_{\mathcal Q}},
	\quad
	\mathcal Qv\in H^{s-\delta_{\mathcal Q}}.
	\]
	Thus the It\^o equation \eqref{Target problem SCCF} is naturally interpreted
	in $H^{s-(1\vee 2\delta_{\mathcal Q})}$ and, by Sobolev embedding, also in
	$C_b^1(\mathbb R)$.
	
	\begin{Definition}\label{Def : solution}
		Let $s>\frac{3}{2}+\max\{2\delta_{\mathcal Q},1\}$.Let $\tau^*$ be a stopping time with $\mathbb P(\tau^*>0)=1$.
		An $H^s$-valued progressively measurable process
		\[
		v=(v(t))_{0\le t<\tau^*}
		\]
		is called a maximal $H^s$ classical solution to \eqref{Target problem SCCF} if the
		following conditions hold:
		
		\begin{itemize}[leftmargin=0.79cm]
			\setlength\itemsep{0.2em}
			
			\item  For a.e. $\omega\in\Omega$, and for every $T<\tau^*(\omega)$,
			$v\in C([0,T];H^s)$.
			
			\item $\pas$, for every $t\in[0,\tau^*)$,  \eqref{Target problem SCCF}
			holds as an identity in $C_b^1(\mathbb R)$.
			
			\item On the event $\{\tau^*<\infty\}$,
			$\limsup_{t\uparrow\tau^*}\|v(t)\|_{H^s}=\infty.$
			
		\end{itemize}
		
		If $\mathbb P(\tau^*=\infty)=1$, the maximal solution is called global.
	\end{Definition}

	Our first result establishes a local-in-time theory under
	Assumption~\ref{Hypo-Q}.
	
	\begin{Theorem}[Local-in-time theory]\label{Thm-local-solution}
		Under Assumption~\ref{Hypo-Q}, let
		\[
		s>\frac{3}{2}+\max\{2\delta_{\mathcal Q},1\}.
		\]
		For every $H^s$-valued $\mathcal F_0$-measurable random variable $v_0$,
		the Cauchy problem \eqref{Target problem SCCF} admits a pathwise unique maximal
		$H^s$ classical solution $(v,\tau^*)$ in the sense of
		Definition~\ref{Def : solution}.
		Moreover,  the maximal lifetime $\tau^*$ is independent of the Sobolev index $s$: if $v_0$
		belongs to two admissible Sobolev spaces, then the corresponding maximal
		lifetimes coincide; and
		the following blow-up criterion also holds:
		\begin{equation}\label{eq:blow-up criterion statement}
			\limsup_{t\uparrow\tau^*}
			\left(
			\|\partial_xv(t)\|_{L^\infty}
			+
			\|\Lambda v(t)\|_{L^\infty}
			\right)
			=\infty\quad
			\text{a.s. on}\ \left\{\tau^*<\infty\right\},
		\end{equation}
	\end{Theorem}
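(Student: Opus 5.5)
We follow the Tang--Wang strategy \cite{Tang-Wang-2022-arXiv,Tang-2023-JFA} for positive-order Stratonovich pseudo-differential noise. The plan has five steps: regularise, prove uniform energy estimates, pass to the limit, obtain uniqueness and maximality, and derive the blow-up criterion.

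\textbf{Step 1: regularised problem.} Let $J_\varepsilon$ be a Friedrichs mollifier, i.e.\ a Fourier multiplier in $\mathrm{OP}\mathcal S_0^{-\infty}$ that is bounded in $\mathrm{OP}\mathcal S_0^0$ uniformly in $\varepsilon$. Set $\mathcal Q_\varepsilon=J_\varepsilon\mathcal Q J_\varepsilon$ and consider
\[
\d v_\varepsilon+J_\varepsilon\big[(\mathcal H J_\varepsilon v_\varepsilon)\partial_xJ_\varepsilon v_\varepsilon\big]\d t=\tfrac12\mathcal Q_\varepsilon^2v_\varepsilon\d t+\mathcal Q_\varepsilon v_\varepsilon\d W.
\]
This is an SDE in $H^s$ with locally Lipschitz coefficients. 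A cut-off $\chi_R(\|v_\varepsilon\|_{W^{1,\infty}}+\|\Lambda v_\varepsilon\|_{L^\infty})$ in front of the nonlinearity gives a global solution.

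\textbf{Step 2: uniform $H^s$ estimate (the key step).} Apply It\^o's formula to $\|v_\varepsilon\|_{H^s}^2$.

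For the nonlinear term, the Kato--Ponce commutator estimate $\|[\mathcal D^s,f]\partial_x g\|_{L^2}\lesssim\|\partial_xf\|_{L^\infty}\|g\|_{H^s}+\|f\|_{H^s}\|\partial_xg\|_{L^\infty}$ applies with $f=\mathcal Hv$. Integrating by parts then gives the bound
\[
C\big(\|\partial_xv\|_{L^\infty}+\|\Lambda v\|_{L^\infty}\big)\|v\|_{H^s}^2,
\]
using $\partial_x\mathcal Hv=-\Lambda v$.

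For the noise, the It\^o correction and the Stratonovich drift combine into
\[
\langle\mathcal Q^2v,v\rangle_{H^s}+\|\mathcal Qv\|_{H^s}^2=\big\langle(\mathcal P^2+\mathcal P^*\mathcal P)\mathcal D^sv,\mathcal D^sv\big\rangle_{L^2},\qquad \mathcal P=\mathcal D^s\mathcal Q\mathcal D^{-s}.
\]
This equals $\tfrac12\langle[(\mathcal P+\mathcal P^*)^2+[\mathcal P^*,\mathcal P]]w,w\rangle$ with $w=\mathcal D^sv$. The admissibility gives $\mathcal L+\mathcal L^*\in\mathrm{OP}\mathcal S^0$. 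By the symbol calculus, $\mathcal P+\mathcal P^*$ has order $0$ (the commutator $[\mathcal D^s,\mathcal L]\mathcal D^{-s}$ has order $\alpha-1\le0$), and $[\mathcal P^*,\mathcal P]$ has order $2\alpha-1\le1$. Here the order-one part appears only when $\alpha=1$, and its real part is bounded by G\r{a}rding-type symmetry; in case (iii) everything commutes exactly. Hence the whole term is $\le C\|v\|_{H^s}^2$, uniformly in $\varepsilon$. I expect this to be the main obstacle, in particular checking the $\alpha=1$ case, where one must show that $\mathcal L^*\mathcal L-\mathcal L\mathcal L^*$ reduces to order zero modulo the skew part.

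Similarly, the martingale term $\langle\mathcal Qv,v\rangle_{H^s}=\tfrac12\langle(\mathcal P+\mathcal P^*)w,w\rangle$ is bounded by $C\|v\|_{H^s}^2$, so BDG applies.

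\textbf{Step 3: passage to the limit.} Take two mollification parameters and estimate $\|v_\varepsilon-v_\eta\|_{H^{s-(1\vee2\delta_{\mathcal Q})}}$ up to a stopping time where $\|v\|_{H^s}$ is bounded. This works because losing derivatives in the lower norm absorbs the $J_\varepsilon-J_\eta$ errors. The resulting Cauchy estimate in probability, via a Gy\"ongy--Krylov-type or direct stopping-time argument, yields a solution with the cut-off. Continuity in $H^s$ follows from the mollified energy identity (a Bona--Smith type argument).

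\textbf{Step 4: uniqueness and maximality.} Pathwise uniqueness follows from the same lower-norm difference estimate. Standard gluing removes the cut-off and yields a maximal solution satisfying the third condition of Definition~\ref{Def : solution}.

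\textbf{Step 5: blow-up criterion and $s$-independence.} Since the energy inequality in Step 2 is linear in $\|v\|_{H^s}^2$ with coefficient $\|\partial_xv\|_{L^\infty}+\|\Lambda v\|_{L^\infty}$, Gronwall together with a stochastic estimate for $\log(1+\|v\|_{H^s}^2)$ shows the following: if this quantity stays bounded up to $\tau^*$, then $\|v\|_{H^s}$ stays bounded, contradicting the blow-up alternative. The same argument for two Sobolev indices gives the independence of $\tau^*$ from $s$.
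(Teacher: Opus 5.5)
Your Steps 1--4 follow the paper's architecture: Friedrichs mollification with $\mathcal Q_n=J_n\mathcal QJ_n$, a cut-off on a Lipschitz-type norm multiplying only the nonlinearity, uniform $H^s$ bounds from the cancellation structure (the paper imports these from Karlsen--Tang--Wang rather than re-deriving them), a stopped Cauchy estimate in a lower Sobolev norm on the original basis, uniqueness from the same difference estimate, gluing, and $H^s$-continuity through the energy identity for $\|J_nv\|_{H^s}^2$.

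Where you genuinely differ is Step 5. In the paper, the blow-up criterion and the $s$-independence are built into the construction. The cut-off is $\chi_R\bigl(\|\partial_x(u-v_0)\|_{L^\infty}+\|\mathcal H\partial_x(u-v_0)\|_{L^\infty}\bigr)$, and $\tau^*$ is \emph{defined} as $\lim_R\tau(R)$, where $\tau(R)$ is the exit time of that same quantity from $[0,R)$. Hence on $\{\tau^*<\infty\}$ it reaches every level before $\tau^*$. The $H^{s_2}$ cut-off solution coincides with the $H^{s_1}$ one by uniqueness, and the exit functional does not involve $s$, so the lifetimes agree.

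Your a posteriori argument is also valid. The martingale integrand of $\log(1+\|v\|_{H^s}^2)$ is bounded by the cancellation estimate, so that martingale converges on $\{\tau^*<\infty\}$. The $s$-independence then follows from the criterion, which is stated in $s$-independent norms, together with uniqueness at the lower index. This route has the merit of not depending on how the maximal solution was built. Its cost is a top-order It\^o formula for the \emph{limit} solution. That formula is only available through $\|J_nv\|_{H^s}^2$, and it needs bounds uniform in $n$ for $\langle J_n\mathcal Q^2v,J_nv\rangle_{H^s}+\|J_n\mathcal Qv\|_{H^s}^2$ and $\langle J_n(\mathcal Hv\,\partial_xv),J_nv\rangle_{H^s}$. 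These are not the $J_\varepsilon\mathcal QJ_\varepsilon$ estimates of your Step 2. Since your own cut-off already acts on $\|v\|_{W^{1,\infty}}+\|\Lambda v\|_{L^\infty}$, gluing along its exit times would make Step 5 unnecessary.

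Several details need repair.

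\textbf{Step 2.} The G\r{a}rding remark is wrong: the quadratic form of a self-adjoint first-order operator such as $\Lambda$ is not bounded by $\|w\|_{L^2}^2$. It is also unnecessary. Since $[\mathcal P^*,\mathcal P]=[\mathcal P+\mathcal P^*,\mathcal P]$ is the commutator of an $\mathrm{OP}\mathcal S^0$ operator with an $\mathrm{OP}\mathcal S^\alpha$ operator, it has order $\alpha-1\le0$. This settles the $\alpha=1$ case you flagged.

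\textbf{Step 3.} Take the lower index strictly between $3/2$ and $s-(1\vee2\delta_{\mathcal Q})$, as the paper does. At the endpoint, the mollifier errors measured in that norm have no decay rate that is uniform on $H^s$-bounded sets.

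\textbf{Random initial data.} You never address that $v_0$ is merely $\mathcal F_0$-measurable with no moments, whereas your BDG and Gronwall bounds in expectation require $\mathbb E\|v_0\|_{H^s}^2<\infty$. The paper solves the approximate SDE for a deterministic parameter $a$ and substitutes $a=v_0(\omega)$ through a jointly measurable solution map. It then proves all estimates conditionally on $\mathcal F_0$, with $\mathcal F_0$-measurable, almost surely finite right-hand sides. The standard alternative is to glue over the $\mathcal F_0$-sets $\{k-1\le\|v_0\|_{H^s}<k\}$.

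\textbf{$H^s$-continuity.} This step hides a subtlety that the paper treats explicitly. Increment moment bounds, via Kolmogorov, only yield a continuous modification of $\|v(\cdot\wedge\tau_N)\|_{H^s}^2$. The paper instead applies a fractional Sobolev--Morrey estimate pathwise to $\|J_nv(\cdot\wedge\tau_N)\|_{H^s}^2$, with moment bounds uniform in $n$. It then passes to the limit along a path-dependent subsequence.
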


		\begin{Remark}[Methodological points in the local theory]
		\label{Remark-local-theory-method}
		The proof of Theorem~\ref{Thm-local-solution} follows a direct
		strong-solution route that differs in several respects from the
		martingale-compactness and variational constructions commonly used for
		stochastic fluid equations.
		
		\begin{enumerate}[label={\bf (\arabic*)},leftmargin=0.79cm]
			\setlength\itemsep{0.2em}

			\item \textbf{Failure of the Gelfand-triple and mild solution frameworks.}
			After conversion to It\^o form, the nonlinear drift, diffusion coefficient,
			and Stratonovich correction have the formal mapping properties
			\[
			(\mathcal Hu)\partial_xu\in H^{s-1},
			\qquad
			\mathcal Qu\in H^{s-\delta_{\mathcal Q}},
			\qquad
			\mathcal Q^2u\in H^{s-2\delta_{\mathcal Q}},
			\]
			although the solution itself is sought in $H^s$. Consequently, the
			coefficients are not invariant on the top-order state space whenever
			derivatives are lost. Moreover, 
			the essential noise contribution is the following cancellation
			mechanism (see \cite[Theorem~4.1 and Lemmas~4.1--4.2]
			{Karlsen-Tang-Wang-2026-arXiv}):
			\begin{equation}
				\left|
				\langle\mathcal Qf,f\rangle_{H^r}
				\right|
				\lesssim
				\|f\|_{H^r}^2,
				\quad
				\left|
				\langle\mathcal Q^2f,f\rangle_{H^r}
				+
				\|\mathcal Qf\|_{H^r}^2
				\right|
				\lesssim
				\|f\|_{H^r}^2,\quad f\in H^{r+2\delta_{\mathcal Q}}.\label{eq:cancel-theorem-remark}
			\end{equation}
			This means that though the operator $\tfrac{1}{2}\mathcal{Q}^2$ may be of parabolic type, the overall equation remains hyperbolic. Consequently, the available energy mechanism is not  the coercivity required by the standard variational  theory for monotone SPDEs on a Gelfand triple; see, for instance,
			\cite{Prevot-Rockner-2007-book}.  Similarly, the equation is not directly covered by classical semilinear
			mild-solution theory on a single Hilbert state space either; compare
			\cite{DaPrato-Zabczyk-2014-Book}.   More importantly, a construction carried out at
			a fixed Sobolev level in the fixed Gelfand-triple does \textit{not} by itself ensure consistency across
			different regularity indices. The independence of the maximal lifespan
			$\tau^*$ from the Sobolev index $s$ is therefore a separate
			cross-regularity consistency property that must be established. In this work, 
			the cut-off exit times coincides the maximal
			lifetimes (see Section \ref{Section: construction of the maximal solution}), which implies the the independence of $\tau^*$ from $s$.
			
			\item \textbf{Solution construction without compact embedding.}
			A common route in the stochastic-fluid literature is to construct
			regularised or Galerkin approximations, establish tightness of their laws,
			obtain a weak martingale solution through an almost-sure representation
			theorem, and then recover a pathwise solution by a
			Gy\"ongy--Krylov or Yamada--Watanabe-type argument. Representative
			examples include
			\cite{Breit-Feireisl-Hofmanova-2018-CPDE,
				Galimberti-etal-2024-JDE,GlattHoltz-Vicol-2014-AoP,Crisan-Flandoli-Holm-2019-JNLS,Alonso-Bethencourt-2020-JNLS,Alonso-Rohde-Tang-2021-JNLS}; in particular, \cite{Alonso-Rohde-Tang-2021-JNLS} develops an abstract framework for transport-noise-driven systems, while
			\cite{Galimberti-etal-2024-JDE} systematically employs tightness and
			almost-sure representations on suitable quasi-Polish path spaces.
			
			The present proof does not proceed through a martingale solution or a
			change of probability space. All approximate solutions are constructed on
			the original stochastic basis, with the same Brownian motion and initial
			data. Using  a renormalised version of \eqref{eq:cancel-theorem-remark} (see Lemma \ref{Lemma:Qn} below),
		one first proves uniform conditional $H^s$ estimates and then a
			stopped Cauchy estimate in $H^\theta$ with 
			$
			\frac32<\theta
			<
			s-\max\{2\delta_{\mathcal Q},1\}.
			$
			The stopping is subsequently removed by controlling the corresponding exit
			probabilities.  This direct construction is particularly convenient on the whole line.
			Indeed, for $s>\sigma$, the embedding
			$
			H^s(\mathbb R)\hookrightarrow H^\sigma(\mathbb R)
			$
			is continuous but not compact. Thus the tightness step available
			in many compact-domain arguments cannot be transferred directly to
			$\mathbb R$. Instead, the present proof obtains convergence directly
			in probability, and subsequently almost surely along a subsequence, in
			the lower Sobolev topology. Related regularisation and cancellation ideas 
			appear in
			\cite{Alonso-Rohde-Tang-2021-JNLS,Tang-Wang-2022-arXiv,Alonso-Miao-Tang-2022-JDE,
				Tang-2023-JFA}.
			
		 		\item \textbf{Initial data without an imposed moment condition.}
		 The approximate equation is first solved with a deterministic parameter
		 $a\in H^s$, while joint measurability of the parameterised solution map is
		 retained. The parameter is then evaluated at
		 $
		 a=v_0(\omega).
		 $
		 The relevant \textit{a priori} and Cauchy estimates are formulated
		 conditionally with respect to $\mathcal F_0$. In this way, $v_0$ is treated
		 as a frozen parameter, and the construction applies to every
		 $H^s$-valued, $\mathcal F_0$-measurable random variable without requiring,
		 for example, 	$\mathbb E\|v_0\|_{H^s}^p<\infty$
		 for any prescribed $p>0$.  The right-hand sides of the conditional
		 estimates are $\mathcal F_0$-measurable quantities depending on the
		 realised initial data and are finite almost surely. Thus no integrability
		 assumption on the law of $v_0$ is needed in order to close the local
		 construction.

		\end{enumerate}

	\end{Remark}

	We next specialise to scalar transport noise. Consider
	\[
	\d v+(\mathcal Hv)\partial_xv \d t
	=
	\sigma(x)\partial_xv\circ{\rm d}W(t),
	\quad
	v(0)=v_0.
	\]
	
	\begin{Hypothesis}\label{Hypo-sigma}
		The coefficient in \eqref{eq:sccf-transport} satisfies
		$
		\sigma\in\mathscr S(\mathbb R;\mathbb R).
		$
	\end{Hypothesis}
	
	Under Assumption~\ref{Hypo-sigma}, Example~\ref{Example:transport-noise-1}
	shows that
	\[
	\mathcal Q=\sigma(x)\partial_x\in\mathbb A^1.
	\]
	Thus $\delta_{\mathcal Q}=1$, and Theorem~\ref{Thm-local-solution} applies
	whenever $s>7/2$. Let $(v,\tau^*)$ denote the resulting maximal solution.
	For $t<\tau^*$, consider the stochastic characteristic flow
		\begin{equation}\label{eq:char-flow}
			\d\phi(t,x)
		=
		(\mathcal Hv)(t,\phi(t,x)) \d t
		-
		\sigma(\phi(t,x))\circ{\rm d}W(t),
		\quad
		\phi(0,x)=x.
	\end{equation}
	If $x_0$ is a global maximum point of $v_0$, then
		\begin{equation}\label{eq:z=phi(x0)}
		z_0(t)\triangleq\phi(t,x_0),
		\quad
		0\le t<\tau^*
	\end{equation}
	remains a global maximum point of $v(t,\cdot)$ for $0\le t<\tau^*$.
	The relevant steepness quantity is
		\begin{equation}\label{eq:M(t) define}
		M(t)\triangleq\Lambda v(t,z_0(t)),
		\quad
		0\le t<\tau^*.
	\end{equation}

\begin{Theorem}[High-probability finite-time breakdown under transport noise]
	\label{Thm-blowup}
	Suppose that Assumption~\ref{Hypo-sigma} holds. Let $s>7/2$, and let
	$v_0\in H^s$ be deterministic. Suppose that $v_0$ has a global maximum
	point $x_0\in\mathbb R$, that is,
	\[
	v_0(x_0)=\sup_{x\in\mathbb R}v_0(x).
	\]
	Let $(v,\tau^*)$ be the maximal $H^s$ solution to
	\eqref{eq:sccf-transport}. Then, for every
	$\varepsilon\in(0,1)$, there exists a constant
	\[
	C^*
	=
	C^*\bigl(
	\sigma,\|v_0\|_{L^\infty},\varepsilon
	\bigr)>0
	\]
	such that, if
	$\Lambda v_0(x_0)>C^*$,
	then
	\[
	\mathbb P\left(
	\tau^*
	\le
	\frac{8}{\Lambda v_0(x_0)}
	\right)
	\ge
	1-\varepsilon.
	\]
\end{Theorem}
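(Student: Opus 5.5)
We argue by contradiction on the event where the solution survives past the time $T_0\triangleq 8/\Lambda v_0(x_0)$, and we track the scalar quantity $M(t)$ from \eqref{eq:M(t) define} along the transported maximiser $z_0(t)=\phi(t,x_0)$.

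\textbf{Step 1: the flow and the sign of $M$.} For $t<\tau^*$, the coefficients of \eqref{eq:char-flow} are $C^2_b$ in space, because $\mathcal Hv\in H^{s}\hookrightarrow C^2_b$ and $\sigma\in\mathscr S$. Hence $\phi$ is a stochastic flow of diffeomorphisms. The transport structure gives $v(t,\phi(t,x))=v_0(x)$, so $z_0(t)$ stays a global maximiser and $\|v(t)\|_{L^\infty}=\|v_0\|_{L^\infty}$. The kernel formula \eqref{eq: Hilbert Lambda kernel} then yields $M(t)>0$, and also $v(t,z_0)-v(t,y)\ge0$ for all $y$.

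\textbf{Step 2: the equation for $M$.} We apply $\Lambda$ to the Itô form \eqref{Target problem SCCF} with $\mathcal Q=\sigma\partial_x$. We then evaluate the resulting field along $z_0$ via the Itô--Wentzell formula. This gives \eqref{eq:M-Stratonovich}, whose Stratonovich coefficient is $[\Lambda,\sigma]\partial_xv(t,z_0)$.

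\emph{Drift.} At a maximum, $\partial_xv(z_0)=0$. Writing $-[\Lambda,\mathcal Hv]\partial_xv$ as a singular integral and using the CCF identity, we aim for the coercive bound
\[
-[\Lambda,\mathcal Hv]\partial_xv(z_0)\ \ge\ \tfrac12 M^2+\mathcal E[v](z_0)\ \ge\ \tfrac12 M^2 .
\]
Here $\mathcal E[v](x)=\frac1\pi\int\frac{(v(x)-v(y))^2}{(x-y)^4}\,\mathrm{d}y\cdot(\ldots)$ is the nonnegative nonlocal energy; this is the deterministic Córdoba--Córdoba--Fontelos pointwise inequality.

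\emph{Noise coefficient.} We prove the decomposition \eqref{eq:Lambda-sigma-decomposition} by writing $\sigma(x)-\sigma(y)=\sigma'(x)(x-y)+O((x-y)^2)$ inside the kernel and integrating by parts. The remainder is then bounded by $C_\sigma\|v\|_{L^\infty}=C_\sigma\|v_0\|_{L^\infty}$. Thus the noise coefficient equals $\sigma'(z_0)M+R$ with $|R|\le C_\sigma\|v_0\|_{L^\infty}$.

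\textbf{Step 3: the reciprocal process $Y=1/M$.} Converting to Itô form requires the Stratonovich correction for the coupled system $(v,z_0)$. This is the step I expect to be the main obstacle. We compute it by applying the Itô--Wentzell formula once more to the coefficient field $[\Lambda,\sigma]\partial_xv$, evaluated at $z_0$. The correction then consists of terms that are at most quadratic in $M$ with bounded coefficients, such as $\tfrac12(\sigma')^2M$, $\sigma\sigma''M$ and $O(\|v_0\|_{L^\infty})$. It may contain a term of the form $c(z_0)M^2$ whose size is not small, and it must be absorbed. We also need to check that terms like $\Lambda\partial_x v(z_0)$ cancel between the field derivative and the motion of $z_0$, just as in the transport noise for $M$ itself.

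Itô's formula for $Y$, stopped at $\tau_R=\inf\{t:M\ge R\}\wedge\tau^*$, gives
\[
\mathrm dY\le\Big(-\tfrac12+C_1Y+C_2Y^2\Big)\mathrm dt-(\sigma'(z_0)Y+RY^2)\,\mathrm dW ,
\]
with $C_1,C_2$ depending on $\sigma$ and $\|v_0\|_{L^\infty}$. On the event where $Y\le Y_0\triangleq 1/\Lambda v_0(x_0)$ is small, the drift is at most $-\frac14$ once $Y_0\le c(\sigma,\|v_0\|_{L^\infty})$. To decouple the multiplicative part, we use the integrating factor $\exp(\int\sigma'(z_0)\,\mathrm dW+\ldots)$, whose modulus is controlled on $[0,1]$ with probability $\ge1-\varepsilon/2$.

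\textbf{Step 4: the probability estimate.} The martingale part has quadratic variation at most $C\,Y_{\max}^2T_0$, where $Y_{\max}$ is a bound on $Y$ over the relevant time window. By Doob's (or Bernstein's) inequality, with probability $\ge1-\varepsilon$ it stays below $Y_0$ on $[0,T_0]$, provided $\Lambda v_0(x_0)>C^*(\sigma,\|v_0\|_{L^\infty},\varepsilon)$.

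On this event, if $\tau^*>T_0$, we get $Y(T_0)\le 2Y_0-\tfrac14T_0+(\text{small})<0$, using $T_0=8Y_0$. This contradicts $Y>0$. Hence $\tau^*\le 8/\Lambda v_0(x_0)$ on that event.

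Two further points are needed. Continuity of $M$ until $\tau^*$ follows from the $C^1$-in-time regularity of $v$ in $C_b^1$. The limit $M\to\infty$ is consistent with the blow-up criterion \eqref{eq:blow-up criterion statement} of Theorem~\ref{Thm-local-solution}, so $Y\downarrow0$ forces $\tau^*$ to occur.
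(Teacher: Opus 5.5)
Your architecture matches the paper's. It runs through invariance along the stochastic flow and positivity of $M$, then the Riccati drift $\frac12M^2+\mathcal E[v](z_0)$ at the transported maximum and the splitting $\Gamma=\sigma'(z_0)M+\mathcal R_\sigma[v](t,z_0)$ with $|\mathcal R_\sigma|\le C_\sigma\|v_0\|_{L^\infty}$. It finishes with $Y=1/M$, a drift bound $\le-\frac14$ while $Y$ is small, Doob's inequality, and a contradiction with $Y>0$ at $T_0=8Y_0$.

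\textbf{The gap.} The missing piece is the step you flag yourself: you never show that $\kappa_\Gamma=\frac{\mathrm d}{\mathrm dt}\langle\Gamma,W\rangle$ satisfies $\kappa_\Gamma\ge-C(M+\|v_0\|_{L^\infty})$. Your remark that the correction \emph{may contain a term $c(z_0)M^2$ that must be absorbed} shows the argument is not closed. A non-small $M^2$ term of the wrong sign could not be absorbed into $\frac12M^2$, and your $Y$-inequality with drift $-\frac12+C_1Y+C_2Y^2$ tacitly assumes no such term exists.

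\textbf{How it closes.} No $M^2$ term can arise, for a structural reason. The noise coefficient $\sigma v_x$ of $v$ is linear in $v$, and the noise coefficient $-\sigma(z_0)$ of $z_0$ does not involve $v$. Hence $\kappa_\Gamma$ is a linear functional of $v(t,\cdot)$. Your It\^o--Wentzell route then works. Set $G=[\Lambda,\sigma]v_x$; this is $C^2_b$, and its noise coefficient lies in $H^{s-2}\hookrightarrow C^1_b$ for $s>7/2$. Using $\partial_x[\Lambda,\sigma]f=[\Lambda,\sigma']f+[\Lambda,\sigma]f_x$, one gets
\[
\kappa_\Gamma=\bigl([\Lambda,\sigma](\sigma v_x)_x-\sigma[\Lambda,\sigma']v_x-\sigma[\Lambda,\sigma]v_{xx}\bigr)(t,z_0(t)).
\]
Now write $[\Lambda,g]f(z)=-g'(z)\mathcal Hf(z)+\frac1\pi\int\mathsf K_g(z,y)f(y)\,\mathrm dy$. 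The two $\sigma\sigma'(z_0)\Lambda v_x(z_0)$ terms cancel, leaving $\kappa_\Gamma=\sigma'(z_0)\Gamma-\sigma\sigma''(z_0)M+\Xi$. Here $\Xi$ is $\frac1\pi\int v$ against $\partial_y(\sigma\partial_y\mathsf K_\sigma)-\sigma(z_0)\partial_y^2\mathsf K_\sigma+\sigma(z_0)\partial_y\mathsf K_{\sigma'}$. Since $(\partial_z+\partial_y)\mathsf K_\sigma=\mathsf K_{\sigma'}$, this is exactly the paper's $\Xi$, bounded by $C_\sigma\|v_0\|_{L^\infty}$ via Lemma~\ref{Lem:sigma-kernel}.

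\textbf{Comparison with the paper.} The paper reaches the same formula differently. It obtains $\langle\sigma'(z_0)M,W\rangle$ from $\mathrm d\langle M,W\rangle=\Gamma\,\mathrm dt$ and $\mathrm d\langle z_0,W\rangle=-\sigma(z_0)\,\mathrm dt$. It computes the remainder bracket from $-\frac1\pi\int\partial_y\mathsf K_\sigma(z_0,y)v\,\mathrm dy$ using It\^o's product formula at fixed $y$ and stochastic Fubini, so it never forms $\Lambda v_x$. Your route shows the cancellation more directly.

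\textbf{Smaller repairs.}
\begin{itemize}
\item You must stop at $\tau_Y=\tau^*\wedge\inf\{t:Y(t)\ge2Y_0\}$ rather than work \emph{on the event $Y\le Y_0$}, since $Y$ can exceed $Y_0$ immediately.
\item Use the globally stopped martingale $\mathcal N(t)=\int_0^t\mathbf 1_{\{r<\tau_Y\}}\nu_YY\,\mathrm dW$, whose bracket up to $T_0$ is deterministically $O(Y_0^3)$.
\item On $\{\sup_{[0,T_0]}|\mathcal N|\le Y_0/2\}$, first exclude $\tau_Y\le T_0$, since it would give $2Y_0\le\frac32Y_0$. Then $Y(T_0)\le Y_0-2Y_0+\frac12Y_0<0$. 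As written, your bound $2Y_0-\frac14T_0+(\text{small})$ equals the small term and is not negative.
\item The integrating factor is unnecessary here; the paper uses it only for the rate theorem. It also would not remove the $\mathcal R_\sigma Y^2$ part of the noise.
\item $v$ is not $C^1$ in time. Continuity of $M$ follows from $v\in C([0,\tau^*);H^s)$, $H^{s-1}\hookrightarrow C_b$, and continuity of $z_0$.
\item No appeal to the blow-up criterion is needed: $\tau^*>T_0$ together with $Y(T_0)<0$ already contradicts positivity.
\end{itemize}
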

\begin{Remark}[Nonlocal compression versus transport-induced rearrangement]
	\label{Remark-blowup-novelty}
	Theorem~\ref{Thm-blowup} and its proof highlight the following four
	structural features of the stochastic CCF dynamics.
	
	\begin{enumerate}[label={\bf (\arabic*)},leftmargin=0.79cm]
		\setlength\itemsep{0.2em}
		
		\item \textbf{Dominance of nonlocal compression in the steep-data
			regime.}
		For every prescribed confidence level $1-\epsilon$, Theorem
		~\ref{Thm-blowup} shows that, if the initial nonlocal steepness
	$\Lambda v_0(x_0)$
		exceeds a corresponding threshold, then
		\[
		\mathbb P\left(
		\tau^*\le\frac8{\Lambda v_0(x_0)}
		\right)
		\ge1-\epsilon.
		\]
		Thus, in the steep-data regime, breakdown occurs on the time scale
		$\frac{1}{\Lambda v_0(x_0)}$ with arbitrarily high prescribed probability.  This gives a
		rigorous sense in which the nonlocal characteristic-compression
		mechanism dominates the stochastic transport-induced rearrangement:
		on the high-probability event supplied by the theorem, the latter is
		not strong enough to prevent the Riccati-type growth of the nonlocal
		strain.
		
		This conclusion complements the finite-time singularity result for
		linear It\^o multiplicative noise in
		\cite{Alonso-Miao-Tang-2022-JDE}, but the mechanism is different.  In
		the present Stratonovich transport setting, scalar values are
		preserved exactly along the stochastic characteristic flow:
		$v(t,\phi(t,x))=v_0(x)$ and	
		$\|v(t)\|_{L^\infty}
		=
		\|v_0\|_{L^\infty},
	$ for
		$0\le t<\tau^*$.
		The singularity therefore cannot be attributed to a direct stochastic
		amplification of the scalar amplitude.  Rather, the proof forces a
		loss of regularity through nonlocal compression detected along the
		transported maximum, while the scalar range remains unchanged.
		
		\item \textbf{Local noise becomes nonlocal at the level of the
			fractional observable.}
		A recurrent strategy in stochastic Burgers- and
		Camassa--Holm-type wave-breaking arguments is to track an extremal
		local steepness variable, often
		$
		\inf_{x\in\mathbb R}\partial_xv(t,x),
		$
		or an equivalent Lagrangian slope observable; see, for example,
		\cite{Alonso-etal-2019-NODEA,Crisan-Holm-2018-PhyD,
			Rohde-Tang-2021-NoDEA}.
		
		The observable in the stochastic CCF equation is fundamentally
		different:
		\[
		M(t)=\Lambda v(t,z_0(t)),
		\]
		where $z_0(t)$ is the stochastic image of an initial global maximum of
		$v_0$.  Although
		$v_x(t,z_0(t))=0$, 
		the point $z_0(t)$ is not selected as an extremum of $\Lambda v$.
		Consequently, one cannot invoke extremum-point identities or sign
		conditions for the spatial derivatives of $\Lambda v$ in order to
		close the evolution of $M$.
		
		Moreover, although the original noise contains only the classical
		local derivative $\sigma(x)\partial_xv$, applying $\Lambda$ produces
		the fractional commutator
		\[
		[\Lambda,\sigma]\partial_xv.
		\]
		Thus, a local transport perturbation becomes nonlocal at the level of
		the observable that drives the singularity.
		
		\item \textbf{Extraction of the singular principal part and the
			coupled field--characteristic correction.}
		To analyse
		\[
	[\Lambda,\sigma]v_x(t,z_0(t)),
		\]
		we separate the leading singular part of the commutator kernel and
		control the regular remainder using
		Lemma~\ref{Lem:sigma-kernel}.  This gives
		\[
		[\Lambda,\sigma]v_x(t,z_0(t))
		=
		\sigma'(z_0(t))M(t)
		+
		\mathcal R_\sigma[v](t,z_0(t)),
		\]
		with
		\[
		\left|
		\mathcal R_\sigma[v](t,z_0(t))
		\right|
		\le
		C_\sigma\|v_0\|_{L^\infty}.
		\]
		The leading contribution is therefore extracted explicitly as
		$\sigma'(z_0)M$, whereas the genuinely nonlocal remainder is controlled
		by the exactly conserved scalar amplitude.
		
		The stochastic correction is substantially more delicate than a
		pointwise scalar Stratonovich--It\^o conversion.  First, a localised
		It\^o--Wentzell formula is used to evaluate the stochastic field
		$\Lambda v$ along the random point $z_0(t)$.  To identify the bracket
		of the resulting noise coefficient, the remainder is represented as
		an integral whose kernel depends on $z_0(t)$.  Since the
		infinite-dimensional field $v(t,\cdot)$ and the characteristic
		$z_0(t)$ are driven by the same Brownian motion, the moving kernel and
		the field have a nontrivial quadratic covariation.
		
		Applying It\^o's formula to the characteristic-dependent kernel,
		followed by the product It\^o formula, kernel integration by parts, and
		stochastic Fubini, identifies both the second-order correction
		generated by the motion of $z_0$ and the cross-variation between the
		moving kernel and the stochastic field.  The resulting equation,
		summarised in Lemma~\ref{lem:dM lemma}, is
		\[
		\mathrm dM(t)
		=
		\left[
		\frac12M(t)^2+\mathscr R_M(t)
		\right]\mathrm dt
		+
		\Gamma(t)\,\mathrm dW(t),
		\]
		where
		\[
		\mathscr R_M(t)
		\ge
		-c_1M(t)-c_2\|v_0\|_{L^\infty},
		\qquad
		|\Gamma(t)|
		\le
		c_\sigma
		\left(
		M(t)+\|v_0\|_{L^\infty}
		\right).
		\]
		It is this coupled field--characteristic calculation that converts the
		original local transport perturbation into estimates compatible with
		the nonlocal Riccati mechanism.
		
		\item \textbf{Reciprocal reduction and the removal of non-uniform upper
			localisation.}
		Even after the fractional commutator and the coupled
		Stratonovich--It\^o correction have been controlled, a direct
		one-level martingale comparison for $M$ encounters a further
		probabilistic obstruction.  Since the noise coefficient $\Gamma$ satisfies only a
		linear-growth estimate, such an argument naturally requires an upper
		localisation $M<N$.  The corresponding quadratic-variation bound
		contains the factor
		\[
		\left(
		N+\|v_0\|_{L^\infty}
		\right)^2,
		\]
		so the resulting probability estimate deteriorates as
		$N\to\infty$; see
		Remark~\ref{Remark:Y=1/M necessary}.  A direct one-level estimate can
		therefore yield a high-probability passage to a fixed upper level, but
		it cannot be passed directly to arbitrarily large levels.
		
		The strict positivity of $M$ permits the reciprocal transformation
		\[
		Y(t)=\frac1{M(t)},
		\qquad
		Y_0=\frac1{M_0}.
		\]
		This maps the entire large-$M$ regime to the small-$Y$ boundary
		region and converts the stochastic Riccati growth of $M$ into a
		one-sided boundary comparison for $Y$.  On the stopped region
		\[
		0<Y(t)<2Y_0,
		\]
		the smallness of $Y_0$ allows the lower-order drift terms to be
		absorbed into the negative Riccati contribution, while the stopped
		martingale coefficient is uniformly of order $Y_0$.  A single
		high-probability estimate then shows that, if the classical solution
		were to survive until
		\[
		8Y_0=\frac8{M_0},
		\]
		the resulting one-sided inequality would force $Y$ to become negative,
		contradicting its strict positivity.
		
		The reciprocal reduction is therefore not merely a convenient change
		of variables.  It incorporates all large-$M$ scales into a single
		stopped estimate and avoids both an iteration over successive upper
		levels and the non-uniform upper localisation that obstructs the
		direct argument.
		
	\end{enumerate}
\end{Remark}

We also have a conditional description of the
	blow-up rate along the transported maximum.  The condition below is stated
	on the event on which this particular observable is responsible for the
	breakdown; the blow-up criterion \eqref{eq:blow-up criterion statement} alone does not
	imply that $M(t)$ diverges. 
	
	We introduce
		\begin{equation}
		\mathcal E[f](z^*)
		\triangleq
		\left\|
		\frac{f(z^*)-f(\cdot)}{z^*-\cdot}
		\right\|_{\dot H^{1/2}}^2
		\ge0,\quad \quad \|g\|_{\dot H^{1/2}}\triangleq \|\Lambda^{1/2}g\|_{L^2}.
		\label{eq:E-functional}
	\end{equation}
	
	\begin{Theorem}[Conditional blow-up rate along the transported maximum]
		\label{Thm-blowup-rate}
		Suppose that Assumption~\ref{Hypo-sigma} holds. Let $s>7/2$, let
		$v_0\in H^s$ be deterministic, and suppose that $x_0$ is a global
		maximum point of $v_0$ such that
		\[
		M_0=\Lambda v_0(x_0)>0.
		\]
		Let $(v,\tau^*)$ be as in Theorem \ref{Thm-blowup}. Let $z_0$, $M$ and $\mathcal E[f](z^*)$ be given in \eqref{eq:z=phi(x0)}, \eqref{eq:M(t) define} and \eqref{eq:E-functional}, respectively.
		Define
		\begin{equation}\label{eq:Omega-M-rate-main}
			\Omega_M
			\triangleq
			\left\{
			\tau^*<\infty,
			\ \lim_{t\uparrow\tau^*}M(t)=\infty
			\right\}.
		\end{equation}
		On $\Omega_M$, set
		\begin{equation}\label{eq:eta-rate-main}
			\eta(t)
			\triangleq
			\frac{\mathcal E[v(t,\cdot)](z_0(t))}{M(t)^2},
			\qquad
			\overline\eta(t)
			\triangleq
			\frac1{\tau^*-t}
			\int_t^{\tau^*}\eta(r)\d r.
		\end{equation}
		Here $\mathcal E$ is the nonnegative functional defined in
		\eqref{eq:E-functional}.  Then the following assertions hold almost surely.
		\begin{enumerate}[label={\rm (\roman*)},leftmargin=0.79cm]
			\item On $\Omega_M$,
			\begin{equation}\label{eq:type-I-rate-main}
				\limsup_{t\uparrow\tau^*}
				M(t)(\tau^*-t)
				\le2.
			\end{equation}
			\item If, on a measurable subevent of $\Omega_M$,
			\begin{equation}\label{eq:eta-minus-main}
				\eta_{\mathrm{av}}^-
				\triangleq
				\liminf_{t\uparrow\tau^*}\overline\eta(t)>0,
			\end{equation}
			then, on that subevent,
			\begin{equation}\label{eq:strict-type-I-rate-main}
				\limsup_{t\uparrow\tau^*}
				M(t)(\tau^*-t)
				\le
				\frac1{\frac12+\eta_{\mathrm{av}}^-}
				<2.
			\end{equation}
			Here the first upper bound is interpreted as zero if
			$\eta_{\mathrm{av}}^-=+\infty$.
			\item If, on a measurable  subevent of $\Omega_M$, there exists a finite
			nonnegative random variable $\eta_*\in[0,\infty)$ such that  
			\begin{equation}\label{eq:eta-star-main}
				\lim_{t\uparrow\tau^*}\overline\eta(t)=\eta_*,
			\end{equation}
			then, on that subevent,
			\begin{equation}\label{eq:exact-rate-main}
				\lim_{t\uparrow\tau^*}
				M(t)(\tau^*-t)
				=
				\frac1{\frac12+\eta_*}.
			\end{equation}
		\end{enumerate}
	\end{Theorem}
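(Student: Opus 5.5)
The plan is to work with the reciprocal process $Y=1/M$ introduced for Theorem~\ref{Thm-blowup}, and to remove its noise \emph{exactly} by a stochastic integrating factor. This reduces all three assertions to a pathwise statement about a Lebesgue integral near $\tau^*$.

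\textbf{Step 1: the equation for $Y$.} I will need a version of the $M$-equation in Lemma~\ref{lem:dM lemma} that isolates the energy term:
\[
\mathrm dM=\Bigl[\bigl(\tfrac12M^2+\mathcal E[v](z_0)\bigr)+\mathscr R^{\flat}_M\Bigr]\mathrm dt+\Gamma\,\mathrm dW,
\qquad |\mathscr R^\flat_M|\le c_1M+c_2\|v_0\|_{L^\infty}.
\]
This should come from computing the drift $-[\Lambda,\mathcal Hv]\partial_xv(z_0)$ exactly. At the maximum, $\partial_xv(z_0)=0$, and the commutator splits into $\frac12(\Lambda v)^2$ plus the quadratic form $\mathcal E$ of the difference quotient $\frac{v(z_0)-v(\cdot)}{z_0-\cdot}$; this is the computation that produces the lower bound $\mathscr R_M\ge\dots$ in Lemma~\ref{lem:dM lemma}. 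The noise-induced terms are linear in $M$ by \eqref{eq:Lambda-sigma-decomposition} and \eqref{eq:R-L-infty}.

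On $\Omega_M$ there is a random time $t_1<\tau^*$ after which $M\ge1$. On $[t_1,\tau^*)$ I set
\[
b(t)=\Gamma(t)/M(t),\qquad a(t)=-\mathscr R^\flat_M(t)/M(t)+\Gamma(t)^2/M(t)^2 .
\]
Both are bounded by constants depending only on $\sigma$ and $\|v_0\|_{L^\infty}$. It\^o's formula for $1/M$ then gives
\[
\mathrm dY=-\bigl(\tfrac12+\eta\bigr)\mathrm dt+a\,Y\,\mathrm dt-b\,Y\,\mathrm dW ,
\]
which is linear in $Y$ apart from the forcing $-(\frac12+\eta)$.

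\textbf{Step 2: integrating factor.} Define
\[
\mathcal I(t)=\exp\Bigl(-\int_{t_1}^t b\,\mathrm dW+\int_{t_1}^t\bigl(a-\tfrac12b^2\bigr)\mathrm dr\Bigr).
\]
This solves $\mathrm d\mathcal I=a\mathcal I\,\mathrm dt-b\mathcal I\,\mathrm dW$. Since $a$ and $b$ are bounded and $\tau^*<\infty$ on $\Omega_M$, the stochastic integral has an almost sure limit as $t\uparrow\tau^*$ by the martingale convergence theorem, because its quadratic variation is finite. Hence $\mathcal I$ extends continuously and strictly positively to $[t_1,\tau^*]$.

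With $Z=Y/\mathcal I$, It\^o's product rule gives the pathwise identity $\mathrm dZ=-(\frac12+\eta)\mathcal I^{-1}\,\mathrm dt$: the $\mathrm dW$-terms and the $b^2$ cross-variation cancel. Because $Y(t)\to0$ on $\Omega_M$ and $\mathcal I(\tau^*)>0$, we get $Z(\tau^*)=0$, and therefore
\[
Y(t)=\mathcal I(t)\int_t^{\tau^*}\bigl(\tfrac12+\eta(r)\bigr)\mathcal I(r)^{-1}\,\mathrm dr .
\]
In particular $\eta$ is integrable near $\tau^*$.

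\textbf{Step 3: conclusions.} Continuity of $\mathcal I$ at $\tau^*$ gives $\sup_{r\in[t,\tau^*]}|\mathcal I(t)/\mathcal I(r)-1|\to0$. Hence
\[
\frac{Y(t)}{\tau^*-t}=(1+o(1))\bigl(\tfrac12+\overline\eta(t)\bigr),
\qquad
M(t)(\tau^*-t)=\frac{1+o(1)}{\frac12+\overline\eta(t)} .
\]
Since $\overline\eta\ge0$, assertion (i) follows immediately. Taking $\liminf\overline\eta=\eta^-_{\rm av}$ gives (ii), including the case $\eta^-_{\rm av}=+\infty$. Taking $\overline\eta\to\eta_*$ gives (iii).

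\textbf{Main obstacle.} The main obstacle is Step 1: obtaining the exact drift structure $\frac12M^2+\mathcal E$ with a remainder \emph{two-sided} linear in $M$. Lemma~\ref{lem:dM lemma} as quoted only provides a lower bound on $\mathscr R_M$, whereas the exact rate in (iii) needs equality up to $O(M)$. I would first try to derive the identity $-[\Lambda,\mathcal Hv]\partial_xv(z_0)=\frac12(\Lambda v(z_0))^2+\mathcal E[v](z_0)$ directly from the kernel representations \eqref{eq: Hilbert Lambda kernel}, using $\partial_xv(z_0)=0$. The remaining terms from the coupled It\^o--Wentzell correction should then be linear in $M$ by the commutator estimates already used for Theorem~\ref{Thm-blowup}.
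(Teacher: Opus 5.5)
Your proposal is correct and follows essentially the paper's route. The two-sided remainder you flag as the main obstacle is supplied by the exact drift identity $\frac12M^2+\mathcal E_M$ at the maximum (Lemma~\ref{lem:identity}) together with the bound $|\kappa_\Gamma|\le C_{\sigma,4}(M+V_\infty)$, your integrating factor $\mathcal I^{-1}$ is the paper's $\mathscr Z$, and the terminal representation and squeeze are identical; the only technical difference is that the paper multiplies the coefficients by a cutoff $\chi_\infty(M)$, which makes $\mathscr Z$ a globally defined continuous process and avoids both your start at the non-stopping time $t_1$ and the appeal to convergence of a local martingale with finite quadratic variation.
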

	
\begin{Remark}[Interpretation of the conditional blow-up-rate theorem]
	\label{Remark-blowup-rate-novelty}
	The restriction to the event $\Omega_M$ is essential.  The general
	blow-up criterion \eqref{eq:blow-up criterion statement} does not imply
	that the particular observable
$
	M(t)=\Lambda v(t,z_0(t))
$
	diverges along the transported maximum.  Theorem~\ref{Thm-blowup-rate}
	therefore describes the first-order temporal blow-up profile of $M$
	within this conditional regime.   	
	The proof reveals the following structural mechanisms.
	
	\begin{enumerate}[label={\bf (\arabic*)},leftmargin=0.79cm]
		\setlength\itemsep{0.2em}
		
		\item \textbf{A conditional Riccati envelope and structural rigidity.}
		On $\Omega_M$, estimate \eqref{eq:type-I-rate-main} gives
		$
		\limsup_{t\uparrow\tau^*}
		M(t)(\tau^*-t)
		\le2.
		$
		The constant $2$ is precisely the blow-up coefficient of the
		Riccati equation
		\[
		M'=\frac12M^2.
		\]
		Thus, whenever the transported-maximum observable is responsible for
		the singularity, its first-order divergence remains inside the
		deterministic Riccati envelope.  This exhibits a form of structural
		rigidity: although the transport noise generates stochastic field
		oscillations and moves the point at which $M$ is evaluated, it does not
		produce a leading-order normalised prefactor larger than the Riccati
		value $2$.
		
		In this precise sense,the explicit Stratonovich transport noise neither
		regularizes the solution nor yields a first-order temporal profile more
		singular than that of the bare Riccati mechanism.
		
		\item \textbf{Terminal integrating factor and asymptotic filtering of
			the explicit stochastic oscillations.}
		Starting from the reciprocal equation for $Y=1/M$ (see
		\eqref{eq:Y-rate-linear-form}), the remaining difficulty is to extract
		an exact coefficient near the random terminal time.  Although the
		explicit stochastic coefficients are bounded in the terminal
		large-$M$ regime, the drift proportional to $Y$, the martingale term
		proportional to $Y$, and their quadratic-covariation contribution
		cannot be discarded separately without losing the exact first-order
		balance.
		
		To this end, we chose a terminal integrating factor $\mathscr Z$ (see 
		\eqref{eq:Z-rate-definition})  so that these terms cancel
		exactly in the product It\^o formula (see \eqref{eq:ZY-rate-differential}):
		\begin{equation*}
			\d\bigl(\mathscr Z(t)Y(t)\bigr)
			=
			-\mathscr Z(t)
			\left(
			\frac12+\eta(t)
			\right)\d t.
		\end{equation*}
	 This cancellation yields the
		terminal representation
		\[
		Y(t)
		=
		\int_t^{\tau^*}
		\frac{\mathscr Z(r)}{\mathscr Z(t)}
		\left(
		\frac12+\eta(r)
		\right)\d r\quad \text{where}\quad  \sup_{r\in[t,\tau^*]}
		\left|
		\frac{\mathscr Z(r)}{\mathscr Z(t)}-1
		\right|
		\longrightarrow0
		\qquad
		\text{as }t\uparrow\tau^*.
		\]
		The Brownian oscillations are therefore  
		absorbed into a pathwise multiplicative factor which becomes
		asymptotically flat on the shrinking terminal interval.  Consequently,
		the explicit stochastic integrating-factor contribution does not
		survive as a separate term in the first-order coefficient.  This is the
		precise sense in which the explicit multiplicative stochastic terms
		are asymptotically neutral at leading order.
		
		\item \textbf{The random nonlocal-energy footprint and the selection
			of the blow-up profile.}
		The quantity
		\[
		\eta(t)
		=
		\frac{
			\mathcal E[v(t,\cdot)](z_0(t))
		}{
			M(t)^2
		}
		\ge0
		\]
		records how the global scalar profile contributes to the compression
		observed at the transported maximum.  Although $\mathcal E$ is a
		deterministic functional of a given profile, the processes $v$ and
		$z_0$ are noise dependent; hence $\eta$ is a random record of the
		nonlocal geometry accumulated along the stochastic evolution.
		
		The theorem can therefore be read as a profile-selection principle:
		
		\begin{itemize}[leftmargin=0.79cm]
			\setlength\itemsep{0.2em}
			\item 	If
			$
			\overline\eta(t)\longrightarrow0,
			$
			then the normalised nonlocal energy is asymptotically negligible on
			the terminal scale, and the Riccati-dominated profile is recovered:
			\[
			M(t)(\tau^*-t)\longrightarrow2.
			\]
			\item 	If instead
			$
			\overline\eta(t)\longrightarrow\eta_*>0,
			$
			then a nontrivial amount of normalised nonlocal energy persists up to
			the singularity, and the first-order profile is modified to
			\[
			M(t)(\tau^*-t)
			\longrightarrow
			\frac1{\frac12+\eta_*}
			<2.
			\]
				In this regime, the global profile contributes an additional
			compressive component.  It raises the effective averaged Riccati
			coefficient from $\frac12$ to $\frac12+\eta_*$ and consequently lowers
			the normalised terminal prefactor.
			
			\item More generally, if only
			$
			\eta_{\mathrm{av}}^-
			=
			\liminf_{t\uparrow\tau^*}\overline\eta(t)>0
			$
			is known, then the nonlocal energy still forces the strict improvement
			in \eqref{eq:strict-type-I-rate-main}, but possible oscillations of
			$\overline\eta(t)$ may prevent the selection of a single asymptotic
			coefficient.  Conversely, the weaker condition
		$
			\liminf_{t\uparrow\tau^*}\overline\eta(t)=0
		$
			does not imply convergence to the coefficient $2$. 	Thus, randomness alters the exact first-order blow-up profile not
			through a surviving stochastic kick at the terminal scale, but
			\emph{implicitly}, by reshaping the global scalar profile before the
			singularity forms.  The terminal Ces\`aro behaviour of $\eta$ records
			the cumulative effect of this random profile deformation and, when it
			converges, selects the resulting first-order coefficient.
		\end{itemize}

	\end{enumerate}
\end{Remark}
	
	\subsection{Plan of the paper}
	
	The remainder of the paper is organised as follows.
	Section~\ref{Section:Preliminaries} collects the Friedrichs-mollifier
	estimates, the cancellation properties of the pseudo-differential noise,
	the deterministic energy estimate for the CCF nonlinearity, and the
	maximum-point and kernel identities used later. In
	Section~\ref{Section:LocalTheory}, we construct the unique maximal classical
	solution by a cut-off Friedrichs approximation, prove convergence in a lower
	Sobolev topology, recover continuity at the top Sobolev order, and establish
	the blow-up criterion. Section~\ref{Section:BlowUp} specialises to
	Stratonovich transport noise. There we construct the stochastic
	characteristic flow, follow an initial maximum, derive the evolution of
	$\Lambda v$ along that characteristic, including the full
	Stratonovich--It\^o correction, and prove Theorem~\ref{Thm-blowup} through
	the reciprocal process.  We then use a terminal stochastic integrating
	factor to prove the conditional rate statement in
	Theorem~\ref{Thm-blowup-rate}.

	\section{Preliminaries}
	\label{Section:Preliminaries}
	
	Recall the operator $\mathrm{OP}$ defined in \eqref{OP define}. For $n\ge1$,
	we define the Friedrichs mollifier $J_n$ by
	\begin{equation}
		J_n \triangleq \mathrm{OP}\bigl(j(\cdot/n)\bigr),
		\quad n\ge1,
		\label{Define Jn}
	\end{equation}
	where $j\in\mathscr S(\mathbb R;\mathbb R)$ is even and satisfies
	\[
	0\le j(\xi)\le1,
	\quad
	j(\xi)=1 \quad\text{for } |\xi|\le1.
	\]
	In particular, $J_n$ preserves real-valued functions.
	
	\begin{Lemma}[Standard properties of Friedrichs mollifiers,
		see \cite{Li-Liu-Tang-2021-SPA,Taylor-2011-PDEbook3}]
		\label{Lemma-Jn}
		Let $J_n$ be defined by \eqref{Define Jn}. Then the following properties
		hold.
		\begin{enumerate}[label={\bf (\arabic*)},leftmargin=0.79cm]
			\setlength\itemsep{0.2em}
			\item For every $r\in\mathbb R$, $J_n$ is self-adjoint on $H^r$:
			\[
			\langle J_nf,g\rangle_{H^r}
			=
			\langle f,J_ng\rangle_{H^r},
			\quad f,g\in H^r.
			\]
			Moreover, $J_n$ commutes with every Fourier multiplier; in particular,
			it commutes with $\mathcal D^r$, $\partial_x$, and $\mathcal H$ whenever
			the corresponding compositions are defined.
			
			\item For every $r\in\mathbb R$ and $f\in H^r$,
			\[
			\sup_{n\ge1}\|J_n\|_{\mathscr L(H^r;H^r)}\le1,
			\quad
			\lim_{n\to\infty}\|J_nf-f\|_{H^r}=0.
			\]
			In addition,
			\[
			\sup_{n\ge1}
			\|J_n\|_{\mathscr L(L^\infty;L^\infty)}<\infty.
			\]
			
			\item For every $a,b\in\mathbb R$ with $a>b$,
			\[
			\|\mathbf I-J_n\|_{\mathscr L(H^a;H^b)}
			\lesssim
			n^{-(a-b)},
			\]
			whereas, for $a<b$,
			\[
			\|J_n\|_{\mathscr L(H^a;H^b)}
			\lesssim
			n^{b-a}.
			\]
			Consequently, for each fixed $n\ge1$, $J_n$ is a smoothing operator:
			$J_n\in\mathscr L(H^a;H^b)$ for all $a,b\in\mathbb R$.
		\end{enumerate}
	\end{Lemma}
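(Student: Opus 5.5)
The three properties are all statements about the Fourier multiplier $J_n$ with symbol $j(\xi/n)$, so I will work entirely on the Fourier side through Plancherel, with $\langle f,g\rangle_{H^r}=\frac{1}{2\pi}\int(1+\xi^2)^r\mathscr Ff\,\overline{\mathscr Fg}\,\mathrm d\xi$. Since $j$ is real, even and Schwartz, the symbol $j(\xi/n)$ is a real, even function of $\xi$ alone. It therefore satisfies \eqref{Real symbol}, so $J_n$ preserves real-valued functions.

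\textbf{Property (1).} On the Fourier side, $J_n$ is multiplication by a real function that does not depend on $x$. Hence
\[
\langle J_nf,g\rangle_{H^r}=\frac1{2\pi}\int(1+\xi^2)^r j(\xi/n)\mathscr Ff\,\overline{\mathscr Fg}\,\mathrm d\xi=\langle f,J_ng\rangle_{H^r}.
\]
Commutation with any Fourier multiplier, in particular $\mathcal D^r$, $\partial_x$ and $\mathcal H$, holds because multiplication of symbols is commutative. I will first check this on $\mathscr S$ and then extend it by density on the natural domains.

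\textbf{Property (2).} The bound $0\le j\le1$ gives $\|J_nf\|_{H^r}\le\|f\|_{H^r}$ immediately. For the convergence $J_nf\to f$, I use that $j(\xi/n)\to j(0)=1$ pointwise. The integrand $(1+\xi^2)^r|1-j(\xi/n)|^2|\mathscr Ff|^2$ is dominated by $(1+\xi^2)^r|\mathscr Ff|^2\in L^1$, so dominated convergence gives $\|J_nf-f\|_{H^r}\to0$. For the $L^\infty$ bound, I write $J_nf=\check j_n*f$ with $\check j_n(x)=n\,(\mathscr F^{-1}j)(nx)$. Young's inequality then gives $\|J_nf\|_{L^\infty}\le\|\mathscr F^{-1}j\|_{L^1}\|f\|_{L^\infty}$, and the constant is finite and independent of $n$ because $\mathscr F^{-1}j\in\mathscr S$.

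\textbf{Property (3).} This is the step needing the most care, although it remains elementary.
\begin{itemize}
\item For $a>b$, the symbol $1-j(\xi/n)$ vanishes when $|\xi|\le n$ and is bounded by $1$ elsewhere. Hence
\[
\|(\mathbf I-J_n)f\|_{H^b}^2\le\sup_{|\xi|\ge n}(1+\xi^2)^{b-a}\,\|f\|_{H^a}^2\lesssim n^{-2(a-b)}\|f\|_{H^a}^2.
\]
\item For $a<b$, I need $\sup_\xi(1+\xi^2)^{(b-a)/2}j(\xi/n)\lesssim n^{b-a}$. I will use the rapid decay $|j(\eta)|\le C_N(1+|\eta|)^{-N}$ with $N=b-a$, together with $1+|\xi|\le n(1+|\xi/n|)$ for $n\ge1$. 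Then $(1+|\xi|)^{b-a}|j(\xi/n)|\le C n^{b-a}$, and the equivalence $(1+\xi^2)^{1/2}\simeq1+|\xi|$ concludes.
\end{itemize}
The smoothing statement follows because for each fixed $n$ the symbol $j(\cdot/n)$ decays faster than any power. Hence $J_n\in\mathscr L(H^a;H^b)$ for all $a,b\in\mathbb R$, and when $a\ge b$ this is simply the $H^a\hookrightarrow H^b$ embedding combined with (2).
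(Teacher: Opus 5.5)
Your proof is correct: each claim reduces, via Plancherel, to pointwise properties of the real, even symbol $j(\xi/n)$, with the convolution representation $J_nf=n(\mathscr F^{-1}j)(n\,\cdot)*f$ used for the $L^\infty$ bound. This is the standard Fourier-multiplier verification behind the references the paper cites; the paper itself gives no proof beyond that citation, so there is no separate argument to compare against.
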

	
	The local existence proof relies on a regularising approximation of
	$\mathcal Q$. The following result is the scalar one-dimensional specialisation of \cite[Theorem~4.1 and Lemmas~4.1--4.2]
	{Karlsen-Tang-Wang-2026-arXiv}.
	
	\begin{Lemma}\label{Lemma:Qn}
		Let Assumption~\ref{Hypo-Q} hold, and let $\delta_{\mathcal Q}$ be defined
		by \eqref{delta-Qi}. For $n\ge1$, set
		\[
		\mathcal Q_n\triangleq J_n\mathcal QJ_n.
		\]
		Then the following properties hold.
		\begin{enumerate}[label={\bf (\arabic*)},leftmargin=0.79cm]
			\setlength\itemsep{0.2em}
			\item For each $n\ge1$,
			$\mathcal Q_n\in\mathrm{OP}\mathcal S^{-\infty}$.
			If the description selected in Assumption~\ref{Hypo-Q} is
			$\mathcal Q\in\mathbb A^\alpha$, then
			$\{\mathcal Q_n\}_{n\ge1}$ is bounded in $\mathbb A^\alpha$.
			If the selected description is
			$\mathcal Q\in\mathbb B^\beta$, then
			$\{\mathcal Q_n\}_{n\ge1}$ is bounded in $\mathbb B^\beta$.
			
			\item For every $r\ge0$,
			\[
			\lim_{n\to\infty}
			\|\mathcal Q_nf-\mathcal Qf\|_{H^r}=0,
			\quad f\in H^{r+\delta_{\mathcal Q}},
			\]
			and
			\[
			\lim_{n\to\infty}
			\|\mathcal Q_n^2f-\mathcal Q^2f\|_{H^r}=0,
			\quad f\in H^{r+2\delta_{\mathcal Q}}.
			\]
			
			\item Let $r,\theta\ge0$ and
			$r>\theta+\delta_{\mathcal Q}$. Then
			\[
			\|\mathcal Q_n-\mathcal Q_m\|_{
				\mathscr L(H^r;H^\theta)}
			\lesssim
			(n\wedge m)^{-(r-\theta-\delta_{\mathcal Q})}.
			\]
			Moreover, for all $f,g\in H^r$,
			\begin{align*}
				\left|
				\left\langle
				\mathcal Q_nf-\mathcal Q_mg,
				f-g
				\right\rangle_{H^\theta}
				\right| \lesssim
				(n\wedge m)^{-2(r-\theta-\delta_{\mathcal Q})}
				\bigl(\|f\|_{H^r}+\|g\|_{H^r}\bigr)^2
				+
				\|f-g\|_{H^\theta}^2.
			\end{align*}
			The implicit constants are independent of $n,m$.
			
			\item Let $r,\theta\ge0$ and
			$r>\theta+2\delta_{\mathcal Q}$. Then
			\[
			\|\mathcal Q_n^2-\mathcal Q_m^2\|_{
				\mathscr L(H^r;H^\theta)}
			\lesssim
			(n\wedge m)^{-(r-\theta-2\delta_{\mathcal Q})}.
			\]
			Moreover, for all $f,g\in H^r$,
			\begin{align*}
				\left|
				\left\langle
				\mathcal Q_n^2f-\mathcal Q_m^2g,
				f-g
				\right\rangle_{H^\theta}
				+
				\|\mathcal Q_nf-\mathcal Q_mg\|_{H^\theta}^2
				\right| \lesssim
				(n\wedge m)^{-(r-\theta-\delta_{\mathcal Q})}
				\bigl(\|f\|_{H^r}+\|g\|_{H^r}\bigr)^2
				+
				\|f-g\|_{H^\theta}^2.
			\end{align*}
			The implicit constants are independent of $n,m$.
			
			\item For every $r\ge0$ and $f\in H^r$,
			\begin{equation}
				\sup_{n\ge1}
				\left|
				\langle\mathcal Q_nf,f\rangle_{H^r}
				\right|
				\lesssim
				\|f\|_{H^r}^2,
				\quad
				\sup_{n\ge1}
				\left|
				\langle\mathcal Q_n^2f,f\rangle_{H^r}
				+
				\|\mathcal Q_nf\|_{H^r}^2
				\right|
				\lesssim
				\|f\|_{H^r}^2.
				\label{Qn cancellation-1}
			\end{equation}
			
			\item For every $r\ge0$ and $f\in H^r$,
			\begin{equation}
				\sup_{n\ge1}
				\left|
				\langle J_n\mathcal Qf,J_n f\rangle_{H^r}
				\right|
				\lesssim
				\|f\|_{H^r}^2,
				\quad
				\sup_{n\ge1}
				\left|
				\langle J_n\mathcal Q^2f,J_n f\rangle_{H^r}
				+
				\|J_n\mathcal Qf\|_{H^r}^2
				\right|
				\lesssim
				\|f\|_{H^r}^2.
				\label{Qn cancellation-2}
			\end{equation}
		\end{enumerate}
	\end{Lemma}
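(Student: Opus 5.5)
The statement is Lemma~\ref{Lemma:Qn}, the scalar one-dimensional specialisation of \cite[Theorem~4.1 and Lemmas~4.1--4.2]{Karlsen-Tang-Wang-2026-arXiv}. I would verify it directly with the symbolic calculus in $\mathrm{OP}\mathcal S^m$ together with Lemma~\ref{Lemma-Jn}.

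\textbf{Step 1 (membership and uniform boundedness).} Since $j(\cdot/n)\in\mathcal S^{-\infty}_0$, composition gives $\mathcal Q_n=J_n\mathcal QJ_n\in\mathrm{OP}\mathcal S^{-\infty}$. For uniformity, take an admissible decomposition $\mathcal Q=\mathcal L+\mathcal G$ and set $\mathcal L_n=J_n\mathcal LJ_n$ and $\mathcal G_n=J_n\mathcal GJ_n$. The family $\{j(\cdot/n)\}_n$ is bounded in $\mathcal S^0_0$, because $|\partial_\xi^r j(\xi/n)|\lesssim n^{-r}\mathbf 1_{|\xi|\gtrsim n}\lesssim(1+|\xi|)^{-r}$. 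Hence $\{\mathcal L_n\}$ is bounded in $\mathrm{OP}\mathcal S^\alpha$ and $\{\mathcal G_n\}$ is bounded in $\mathrm{OP}\mathcal S^0$. Since $J_n$ is self-adjoint, $\mathcal L_n+\mathcal L_n^*=J_n(\mathcal L+\mathcal L^*)J_n$, which is bounded in $\mathrm{OP}\mathcal S^0$. The $\mathbb B^\beta$ case is the same argument, and there everything commutes because all operators are Fourier multipliers.

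\textbf{Step 2 (convergence and rates).} Write
\[
\mathcal Q_n-\mathcal Q=(J_n-\I)\mathcal QJ_n+\mathcal Q(J_n-\I).
\]
Lemma~\ref{Lemma-Jn}(2) together with dominated convergence gives part (2) for $\mathcal Q$. For $\mathcal Q^2$, expand $\mathcal Q_n^2-\mathcal Q^2$ as a telescoping sum of the same kind.

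For part (3), the same decomposition with $n$ and $m$ combined with Lemma~\ref{Lemma-Jn}(3) gives
\[
\|\mathcal Q_n-\mathcal Q_m\|_{\mathscr L(H^r;H^\theta)}\lesssim (n\wedge m)^{-(r-\theta-\delta_{\mathcal Q})}.
\]
For the inner-product estimate, write $\mathcal Q_nf-\mathcal Q_mg=\mathcal Q_n(f-g)+(\mathcal Q_n-\mathcal Q_m)g$. The first term is handled by part (5) at level $\theta$. The second is controlled by Cauchy--Schwarz and Young's inequality against $\|f-g\|_{H^\theta}$.

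Part (4) is analogous. Use
\[
\mathcal Q_n^2f-\mathcal Q_m^2g=\mathcal Q_n^2(f-g)+(\mathcal Q_n^2-\mathcal Q_m^2)g,
\]
and pair the mixed terms so that the combination $\langle\mathcal Q_n^2h,h\rangle+\|\mathcal Q_nh\|^2$ with $h=f-g$ appears. The cross terms have the form $\langle(\mathcal Q_n-\mathcal Q_m)g,\mathcal Q_n^*h\rangle$. To estimate them, move one copy of $\mathcal Q_n$ via $\mathcal Q_n^*=-\mathcal Q_n+(\text{order }0)$. The loss of only $\delta_{\mathcal Q}$, rather than $2\delta_{\mathcal Q}$, in the rate comes from this bookkeeping.

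\textbf{Step 3 (the cancellation (5)--(6)), the main obstacle.} Write $\mathcal Q_n=\mathcal L_n+\mathcal G_n$ with $\mathcal L_n^*=-\mathcal L_n+\mathcal P_n$, where $\{\mathcal P_n\}$ is bounded in $\mathrm{OP}\mathcal S^0$.

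For the first bound, use
\[
2\langle\mathcal L_nf,f\rangle_{H^r}=\langle(\mathcal L_n+\mathcal L_n^{*,r})f,f\rangle_{H^r},
\]
where the $H^r$-adjoint is
\[
\mathcal L_n^{*,r}=\mathcal D^{-2r}\mathcal L_n^*\mathcal D^{2r}=\mathcal L_n^*+\mathcal D^{-2r}[\mathcal L_n^*,\mathcal D^{2r}].
\]
The commutator $[\mathcal L_n^*,\mathcal D^{2r}]$ has order $\alpha+2r-1\le 2r$, uniformly in $n$, because $\alpha\le1$. This is exactly where $\alpha\le1$ is needed. In case (iii) the commutator vanishes, so any $\beta\ge0$ is allowed.

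For the second bound, observe that
\[
\langle\mathcal Q_n^2f,f\rangle_{H^r}+\|\mathcal Q_nf\|_{H^r}^2=\langle\mathcal Q_nf,(\mathcal Q_n^{*,r}+\mathcal Q_n)f\rangle_{H^r}.
\]
Here $\mathcal Q_n^{*,r}+\mathcal Q_n=:\mathcal K_n$ is bounded in $\mathrm{OP}\mathcal S^0$ by the computation above. This leaves $\langle\mathcal Q_nf,\mathcal K_nf\rangle_{H^r}$, which a priori loses $\alpha$ derivatives. To fix this, symmetrise again:
\[
\langle\mathcal Q_nf,\mathcal K_nf\rangle_{H^r}=\tfrac12\bigl\langle(\mathcal K_n^{*,r}\mathcal Q_n+\mathcal Q_n^{*,r}\mathcal K_n)f,f\bigr\rangle_{H^r}.
\]
Then substitute $\mathcal Q_n^{*,r}=-\mathcal Q_n+\mathcal K_n$, which turns the main part into the commutator $[\mathcal K_n^{*,r},\mathcal Q_n]$ (up to the order-zero term $\mathcal K_n^{*,r}-\mathcal K_n$ composed with $\mathcal Q_n$, which needs a further commutator/symbol argument). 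That commutator has order $\alpha-1\le0$ uniformly in $n$. I expect verifying this uniformity in $n$, and handling the lower-order pieces in the $\mathbb A^\alpha$ case, to be the delicate point.

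Part (6) follows the same way, applied to $J_n\mathcal QJ_n$ acting on $f$ rather than $J_nf$. Equivalently, rewrite
\[
\langle J_n\mathcal Qf,J_nf\rangle_{H^r}=\langle\mathcal Q_nf,f\rangle_{H^r}
\]
by the self-adjointness of $J_n$, and handle $J_n\mathcal Q^2J_n-\mathcal Q_n^2=J_n\mathcal Q(\I-J_n^2)\mathcal QJ_n$. Because this differs from $\mathcal Q_n^2$, I would repeat the symmetrisation with $J_n^2$ inserted and use $[J_n,\mathcal Q]\in\mathrm{OP}\mathcal S^{\alpha-1}$ uniformly.
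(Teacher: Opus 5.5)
Your route differs from the paper's. The paper cites \cite[Theorem~4.1 and Lemmas~4.1--4.2]{Karlsen-Tang-Wang-2026-arXiv} for (1)--(5), and gets (6) by applying that theorem to the bounded family $\mathcal P_n=\mathcal D^rJ_n\subset\mathrm{OP}\mathcal S^r$. You instead rederive everything by symbolic calculus, and for (1)--(5) this works.

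It is in fact simpler than you fear. Since $\mathcal K_n=\mathcal Q_n^{*,r}+\mathcal Q_n$ is $H^r$-self-adjoint, $\mathcal K_n^{*,r}=\mathcal K_n$. So the term $\mathcal Q_n(\mathcal K_n^{*,r}-\mathcal K_n)$ you flag is identically zero, and
\[
\langle\mathcal Q_nf,\mathcal K_nf\rangle_{H^r}=\tfrac12\bigl\langle\bigl([\mathcal K_n,\mathcal Q_n]+\mathcal K_n^2\bigr)f,f\bigr\rangle_{H^r}
\]
holds exactly. Likewise, in (4) the rate $(n\wedge m)^{-(r-\theta-\delta_{\mathcal Q})}$ needs no adjoint trick. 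Pair in $H^{\theta-\delta_{\mathcal Q}}\times H^{\theta+\delta_{\mathcal Q}}$ and use $\|f-g\|_{H^{\theta+\delta_{\mathcal Q}}}\le\|f\|_{H^r}+\|g\|_{H^r}$.

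The genuine gap is in (6). The identity $\langle J_n\mathcal Qf,J_nf\rangle_{H^r}=\langle\mathcal Q_nf,f\rangle_{H^r}$ is false. Self-adjointness of $J_n$ only gives $\langle J_n^2\mathcal Qf,f\rangle_{H^r}$, whereas $\langle\mathcal Q_nf,f\rangle_{H^r}=\langle\mathcal QJ_nf,J_nf\rangle_{H^r}$; the two differ by $\langle[J_n,\mathcal Q]f,J_nf\rangle_{H^r}$. Similarly, the second estimate in (6) concerns $\langle J_n\mathcal Q^2f,J_nf\rangle_{H^r}+\|J_n\mathcal Qf\|_{H^r}^2$, not $\langle J_n\mathcal Q^2J_nf,f\rangle_{H^r}+\|\mathcal Q_nf\|_{H^r}^2$. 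So comparing $J_n\mathcal Q^2J_n$ with $\mathcal Q_n^2$ addresses the wrong object, and (6) does not reduce to (5).

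What is missing is a cancellation with the regulariser placed outside $\mathcal Q$. Set $g=J_nf$ and $C_n=[J_n,\mathcal Q]$, which is bounded in $\mathrm{OP}\mathcal S^{\alpha-1}$ (and is $0$ in case (iii)). Then
\[
J_n\mathcal Qf=\mathcal Qg+C_nf,\qquad J_n\mathcal Q^2f=\mathcal Q^2g+\mathcal QC_nf+C_n\mathcal Qf .
\]
The first bound in (6) follows from the unregularised cancellation for $\mathcal Q$, as in \eqref{Q cancellation}, applied to $g$, together with the boundedness of $C_n$ on $H^r$.

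For the second bound, after using that cancellation for $g$ you are left with $\|C_nf\|_{H^r}^2$ plus three cross terms. Each cross term has order $2\alpha-1$, so none is individually bounded when $\alpha>1/2$. Let $\mathcal K=\mathcal Q^{*,r}+\mathcal Q$, which has order $0$, and write $C_n\mathcal Q=\mathcal QC_n+[C_n,\mathcal Q]$, where the commutator is bounded in $\mathrm{OP}\mathcal S^{2\alpha-2}$. This gives the exact identity
\[
\langle\mathcal QC_nf,g\rangle_{H^r}+2\langle\mathcal Qg,C_nf\rangle_{H^r}+\langle C_n\mathcal Qf,g\rangle_{H^r}=2\langle C_nf,\mathcal Kg\rangle_{H^r}+\langle[C_n,\mathcal Q]f,g\rangle_{H^r},
\]
whose right-hand side is uniformly bounded by $\|f\|_{H^r}^2$. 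Prove this first for Schwartz $f$ and extend by density. This second-level cancellation is exactly what the cited theorem supplies when applied to $\mathcal P_n=\mathcal D^rJ_n$, which is how the paper proves (6).
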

	
	\begin{proof}
		Properties \textbf{(1)}--\textbf{(5)} follow from
		\cite[Theorem~4.1 and Lemmas~4.1--4.2]
		{Karlsen-Tang-Wang-2026-arXiv}.
		To prove \textbf{(6)}, fix $r\ge0$ and set
		$\mathcal P_n\triangleq\mathcal D^rJ_n$. The family
		$\{\mathcal P_n\}_{n\ge1}$ is bounded in
		$\mathrm{OP}\mathcal S^r$ and, in the Fourier-multiplier case, in
		$\mathrm{OP}\mathcal S_0^r$. Applying
		\cite[Theorem~4.1]{Karlsen-Tang-Wang-2026-arXiv} to $\mathcal P_n$ gives the
		claimed estimates first for sufficiently regular $f$. Since $J_n$ is
		smoothing, all displayed expressions are well defined for $f\in H^r$;
		the estimates then extend to $H^r$ by density.
	\end{proof}
	
	\begin{Remark}
		The two uniform cancellation estimates \eqref{Qn cancellation-1} and \eqref{Qn cancellation-2} serve different purposes.
		First, recalling the Stratonovich--It\^o conversion
		\[
		\mathcal Q_nv_n\circ\mathrm dW(t)
		=
		\mathcal Q_nv_n\,\mathrm dW(t)
		+
		\frac{1}{2}\mathcal Q_n^2v_n\,\mathrm dt,
		\]
		It\^o's formula for $\|v_n\|_{H^s}^2$ produces the noise contributions
		\begin{equation*}
			2\langle\mathcal Q_nv_n,v_n\rangle_{H^s}\,\mathrm dW(t)
			+
			\left(
			\langle\mathcal Q_n^2v_n,v_n\rangle_{H^s}
			+
			\|\mathcal Q_nv_n\|_{H^s}^2
			\right)\mathrm dt.
		\end{equation*}
		Thus, \eqref{Qn cancellation-1} is the cancellation mechanism used to
		close the uniform $H^s$ estimates for the approximate solutions and to
		control the corresponding difference equations.
		
		Second, after the limiting solution $v$ has been constructed in a lower
		Sobolev topology, applying $J_n$ to the limiting It\^o equation and
		then applying It\^o's formula to $\|J_n v\|_{H^s}^2$ produces
		\[
		2\langle J_n\mathcal Qv,J_n v\rangle_{H^s}\,\mathrm dW(t)
		\quad \text{and}\quad
		\left(
		\langle J_n\mathcal Q^2v,J_n v\rangle_{H^s}
		+
		\|J_n\mathcal Qv\|_{H^s}^2
		\right)\mathrm dt.
		\]
		Estimate \eqref{Qn cancellation-2} provides bounds for these terms that
		are uniform in $n$. These uniform bounds are one ingredient in the
		proof of the continuity of $t\mapsto \|v(t)\|_{H^s}^2$.
		
		For transport-type noise involving classical derivatives, cancellation
		estimates in integer-order Sobolev spaces can be found in
		\cite{Crisan-Flandoli-Holm-2019-JNLS,Alonso-Bethencourt-2020-JNLS}, while
		fractional-order estimates are first treated in
		\cite{Alonso-Rohde-Tang-2021-JNLS}. Related cancellation estimates for
		pseudo-differential noise were first established in \cite{Tang-Wang-2022-arXiv} and then developed in
		\cite{Tang-2023-JFA,Karlsen-Tang-Wang-2026-arXiv,Alonso-Pang-Tang-2026-JLMS}.
	\end{Remark}
	
	We recall the following Kato--Ponce estimates.
	
\begin{Lemma}[Kato--Ponce estimates;
	see \cite{Kenig-Ponce-Vega-1991-JAMS}]
	\label{KP-commutator}
	Let $s>0$. For sufficiently smooth $f$ and $g$,
	\[
	\|[\mathcal D^s,f\mathbf I]g\|_{L^2}
	\lesssim_s
	\|\partial_xf\|_{L^\infty}
	\|\mathcal D^{s-1}g\|_{L^2}
	+
	\|\mathcal D^sf\|_{L^2}
	\|g\|_{L^\infty},
	\]
	and
	\[
	\|\mathcal D^s(fg)\|_{L^2}
	\lesssim_s
	\|f\|_{L^\infty}
	\|\mathcal D^sg\|_{L^2}
	+
	\|\mathcal D^sf\|_{L^2}
	\|g\|_{L^\infty}.
	\]
\end{Lemma}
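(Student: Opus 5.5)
This is the classical Kato--Ponce commutator estimate and fractional Leibniz rule on $\mathbb R$. I would not reprove it from scratch but reduce it to the Coifman--Meyer paraproduct machinery, as in \cite{Kenig-Ponce-Vega-1991-JAMS}. The plan is to work with Schwartz $f,g$, prove both bounds with constants depending only on $s$, and then extend to ``sufficiently smooth'' functions by density. The extension uses that both sides are continuous in the relevant norms.

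\textbf{Step 1: paraproduct decomposition.} Take a Littlewood--Paley decomposition $\mathrm{I}=\sum_{k\ge -1}\Delta_k$ with low-frequency cut-offs $S_k=\sum_{j<k}\Delta_j$. Split
\[
fg=\sum_k S_{k-2}f\,\Delta_kg+\sum_k \Delta_kf\,S_{k-2}g+\sum_{|j-k|\le 2}\Delta_jf\,\Delta_kg
\triangleq T_fg+T_gf+R(f,g).
\]
For the Leibniz rule, the piece $\mathcal D^s T_fg$ has $k$-th block frequency-localised at $2^k$. Almost orthogonality then gives
\[
\|\mathcal D^sT_fg\|_{L^2}\lesssim\|f\|_{L^\infty}\|\mathcal D^sg\|_{L^2}.
\]
Symmetrically, the $T_gf$ term is bounded by $\|g\|_{L^\infty}\|\mathcal D^sf\|_{L^2}$. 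In the resonant term $R(f,g)$ the output frequency is at most of order $2^k$. Since $s>0$, the weight $2^{js}$ can be moved onto the higher of the two factors and summed in $\ell^2$. This gives the second inequality.

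\textbf{Step 2: the commutator.} Write
\[
[\mathcal D^s,f]g=\bigl(\mathcal D^sT_fg-T_f\mathcal D^sg\bigr)+\bigl(\mathcal D^sT_gf+\mathcal D^sR(f,g)\bigr)-\bigl(T_{\mathcal D^sg}f+R(f,\mathcal D^sg)\bigr).
\]
\begin{itemize}
\item The middle bracket is bounded by $\|\mathcal D^sf\|_{L^2}\|g\|_{L^\infty}$, as in Step 1.
\item In the last bracket, the derivatives sit on $g$ at a frequency comparable to or below that of $f$. One therefore transfers $2^{ks}$ to $f$, writing $\Delta_kf=2^{-k}\tilde\Delta_k\partial_xf$. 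This bounds the bracket by $\|\partial_xf\|_{L^\infty}\|\mathcal D^{s-1}g\|_{L^2}$, plus low-frequency pieces that are controlled by $\|\mathcal D^sf\|_{L^2}\|g\|_{L^\infty}$.
\item The first bracket is the genuine commutator. Each block equals $[\mathcal D^s\tilde\Delta_k,S_{k-2}f]\Delta_kg$. A first-order Taylor expansion of the kernel of $\mathcal D^s\tilde\Delta_k$ bounds this by $\|\partial_xf\|_{L^\infty}\,2^{k(s-1)}\|\Delta_kg\|_{L^2}$. Squaring and summing over $k$ yields $\|\partial_xf\|_{L^\infty}\|\mathcal D^{s-1}g\|_{L^2}$.
\end{itemize}

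\textbf{Main obstacle.} The delicate step is the resonant part when $0<s\le1$, together with the low-frequency block $k=-1$, where $\mathcal D^{s-1}$ is not homogeneous. I would first try to treat these by Bernstein inequalities, using the inhomogeneous symbol $(1+|\xi|^2)^{s/2}$ directly. The fallback is simply to cite \cite{Kenig-Ponce-Vega-1991-JAMS} (Lemma X1) for the full statement, which is what the lemma's attribution indicates the paper does.
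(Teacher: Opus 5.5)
The paper gives no proof of this lemma; it quotes the estimate from the literature, so your fallback of citing it is exactly what the paper does. Your paraproduct sketch, however, does not close as written, and it fails at a different place from the one you flag. Step 1, the first and middle brackets of Step 2, and the term $T_{\mathcal D^sg}f$ are all fine. In each of these, either the output of the $k$-th piece is localised at frequency $2^k$, or, for $\mathcal D^sR(f,g)$, the outer $\mathcal D^s$ supplies the factor $2^{(l-k)s}$, which is summable because $s>0$.

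The failure is the resonant term
\[
R(f,\mathcal D^sg)=\sum_{|j-k|\le2}\Delta_jf\,\Delta_k\mathcal D^sg
\]
in your last bracket. After moving one derivative onto $f$, it is essentially $\sum_k\tilde\Delta_k\partial_xf\cdot\Delta_k'\mathcal D^{s-1}g$. Each summand is Fourier-supported in a ball $|\xi|\lesssim2^k$, not in an annulus, so almost orthogonality is unavailable. Projecting onto output frequency $2^l$ gives only
\[
\|\Delta_lR(f,\mathcal D^sg)\|_{L^2}\lesssim\|\partial_xf\|_{L^\infty}\sum_{k\ge l-3}\|\Delta_k\mathcal D^{s-1}g\|_{L^2},
\]
and this need not be square-summable in $l$ for a general $\ell^2$ sequence. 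Here $\mathcal D^s$ acts on the high-frequency input rather than on the output, so no factor $2^{(l-k)s}$ is available. This happens for every $s>0$, not only for $0<s\le1$, and no Bernstein inequality repairs it. Indeed, the bilinear form $(a,h)\mapsto\sum_k\tilde\Delta_ka\,\Delta_k'h$ is unbounded from $B^0_{\infty,\infty}\times L^2$ to $L^2$. To see this, test it on a lacunary $a=\sum_k\cos(\tfrac32 2^kx)$ against $h=\sum_kc_k\cos(\tfrac32 2^kx)\psi(x)$ with $c_k=1/k$. So the uniform block bound $\sup_k\|\tilde\Delta_k\partial_xf\|_{L^\infty}\lesssim\|\partial_xf\|_{L^\infty}$, which is all your argument uses, cannot suffice.

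The missing ingredient is the endpoint estimate
\[
\Bigl\|\sum_k\tilde\Delta_ka\,\Delta'_kh\Bigr\|_{L^2}\lesssim\|a\|_{\mathrm{BMO}}\|h\|_{L^2},
\qquad a=\partial_xf\in L^\infty\subset\mathrm{BMO}.
\]
One proves it by duality. Since the $k$-th product has frequencies $\lesssim2^k$, the test function $\phi$ can be replaced by $S_{k+c}\phi$ in the $k$-th term. Then apply Cauchy--Schwarz in $(k,x)$. Finally, use that $\sum_k|\tilde\Delta_ka(x)|^2\,\mathrm dx\otimes\delta_{2^{-k}}(\mathrm dt)$ is a Carleson measure with constant $\lesssim\|a\|_{\mathrm{BMO}}^2$, together with the Carleson embedding controlled by the maximal function of $\phi$. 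This is the $L^\infty\times L^2\to L^2$ endpoint of the Coifman--Meyer theorem, and it carries the real content of the Kato--Ponce commutator estimate. With it inserted, your decomposition gives a complete proof. Without it, the sketch proves only the Leibniz rule and all of the commutator bound except this one term.
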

	
	The next estimate is the deterministic energy estimate for the CCF
	nonlinearity. It follows from the Kato--Ponce commutator estimate in
	Lemma \ref{KP-commutator} together with the one-dimensional Hilbert-transform structure.  We refer to \cite[Lemmas~2.3 and~3.1]{Alonso-Miao-Tang-2022-JDE} for the detailed proof.
	
	\begin{Lemma}\label{Lemma : Huux}
		Let $s>5/2$. Let  $J_n$ be the Friedrichs mollifier defined in \eqref{Define Jn}.
		Then we have
		\begin{equation*}
			\left|\IP{\mathcal{H} u\partial_x u, u}_{H^{s}}\right|
			\lesssim  (\|u_x\|_{L^\infty}+\|\mathcal{H} u_x\|_{L^\infty})\|u\|^2_{H^{s}},\quad u\in H^{s+1}.
		\end{equation*}
		and
		\begin{equation*}
			\sup_{n\ge1}\left|\bIP{J_n (\mathcal{H} u\partial_x u), J_n u}_{H^{s}}\right|
			\lesssim  (\|u_x\|_{L^\infty}+\|\mathcal{H} u_x\|_{L^\infty})\|u\|^2_{H^{s}},\quad u\in H^{s}.
		\end{equation*}
		Moreover, for all $u,v\in H^s$, all $n,m\ge1$, and every
		$\theta\in(3/2,s-1)$,
		\begin{align*}
			\left|\IP{J_n \left[\mathcal{H} J_n u\partial_x J_nu\right]-J_m \left[\mathcal{H} J_m v\partial_x J_mv\right],u-v}_{H^\theta} \right|
			\lesssim  \Big(1+\|u\|^4_{H^s}+\|v\|^4_{H^s}\Big)
			\left((n\wedge m)^{-2(s-1-\theta)}+\|u-v\|^2_{H^\theta}\right).
		\end{align*}
	\end{Lemma}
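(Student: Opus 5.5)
The plan is to treat the three estimates of Lemma~\ref{Lemma : Huux} in turn. Each one splits the nonlinearity into a transport part, whose derivative loss is removed by integrating by parts, and a commutator part, controlled by Lemma~\ref{KP-commutator}. Throughout we use that $\mathcal H$ is an isometry on every $H^r$ and commutes with $\mathcal D^r$, $\partial_x$ and $J_n$. We also use $\partial_x\mathcal Hu=\mathcal H u_x=-\Lambda u$.

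\emph{First estimate.} For $u\in H^{s+1}$ write
\[
\langle \mathcal Hu\,\partial_x u,u\rangle_{H^s}
=\langle [\mathcal D^s,\mathcal Hu]\partial_xu,\mathcal D^su\rangle_{L^2}
+\langle \mathcal Hu\,\partial_x\mathcal D^su,\mathcal D^su\rangle_{L^2}.
\]
Integrating by parts, the second term equals $-\frac12\int(\mathcal Hu_x)|\mathcal D^su|^2\,\mathrm dx$, which is bounded by $\|\mathcal Hu_x\|_{L^\infty}\|u\|_{H^s}^2$. For the first term, the commutator estimate of Lemma~\ref{KP-commutator} with $f=\mathcal Hu$ and $g=u_x$ gives
\[
\|\partial_x\mathcal Hu\|_{L^\infty}\|\mathcal D^{s-1}u_x\|_{L^2}+\|\mathcal D^s\mathcal Hu\|_{L^2}\|u_x\|_{L^\infty}
\lesssim(\|u_x\|_{L^\infty}+\|\mathcal Hu_x\|_{L^\infty})\|u\|_{H^s}.
\]
Cauchy--Schwarz then closes the first estimate.

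\emph{Second estimate.} For $u\in H^s$ the quantity is defined because $J_n$ is smoothing. Since $J_n$ commutes with $\mathcal D^s$ but not with multiplication by $\mathcal Hu$, I write
\[
\mathcal D^sJ_n(\mathcal Hu\,u_x)=[\mathcal D^sJ_n,\mathcal Hu]u_x+\mathcal Hu\,\partial_x(\mathcal D^sJ_nu).
\]
The second piece is paired with $\mathcal D^sJ_nu$ and integrated by parts exactly as above. The first piece needs a commutator estimate for the operator family $\{\mathcal D^sJ_n\}_{n\ge1}$, uniform in $n$:
\[
\|[\mathcal D^sJ_n,f]\partial_xg\|_{L^2}\lesssim\|f_x\|_{L^\infty}\|g\|_{H^s}+\|f\|_{H^s}\|g_x\|_{L^\infty}.
\]
I would obtain this from the Kato--Ponce commutator estimate applied to $\mathcal D^s$. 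The remainder $[J_n,f]$ is handled via the fact that $\{J_n\}$ is bounded in $\mathrm{OP}\mathcal S^0$, so $[J_n,f]\partial_x$ is uniformly bounded of order $0$ with constants controlled by $\|f_x\|_{L^\infty}$ and higher norms. This is the standard estimate in \cite{Li-Liu-Tang-2021-SPA}. I expect this uniform-in-$n$ commutator to be the main technical point; the rest is bookkeeping.

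\emph{Third (difference) estimate.} Split the difference as
\[
(J_n-J_m)\bigl[\mathcal HJ_nu\,\partial_xJ_nu\bigr]+J_m\bigl[\mathcal Hw\,\partial_xJ_nu+\mathcal HJ_mv\,\partial_xw\bigr],
\qquad w\triangleq J_n(u-v)+(J_n-J_m)v.
\]
The first piece is estimated with Lemma~\ref{Lemma-Jn}\textbf{(3)}: since $\|J_n-J_m\|_{\mathscr L(H^{s-1};H^\theta)}\lesssim(n\wedge m)^{-(s-1-\theta)}$ and $H^{s-1}$ is an algebra, it is bounded in $H^\theta$ by $(n\wedge m)^{-(s-1-\theta)}\|u\|_{H^s}^2$. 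Young's inequality against $\|u-v\|_{H^\theta}$ then gives the stated form.

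Terms in which $w$ appears undifferentiated are bounded in $H^\theta$ using that $H^\theta$ is an algebra for $\theta>3/2$ and that $\partial_xJ_nu$ is bounded in $H^{s-1}\subset H^\theta$. The contribution of $(J_n-J_m)v$ carries a factor $\|(J_n-J_m)v\|_{H^{\theta+1}}\lesssim(n\wedge m)^{-(s-1-\theta)}\|v\|_{H^s}$; the same bound disposes of the differentiated term $\mathcal HJ_mv\,\partial_x(J_n-J_m)v$. What remains is the genuinely derivative-losing term $J_m[\mathcal HJ_mv\,\partial_xJ_n(u-v)]$. Replace $J_n(u-v)$ by $u-v$ at the cost of $\|(J_n-\mathbf I)(u-v)\|_{H^{\theta+1}}\lesssim n^{-(s-1-\theta)}(\|u\|_{H^s}+\|v\|_{H^s})$. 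Then treat $\langle J_m[\mathcal HJ_mv\,\partial_x(u-v)],u-v\rangle_{H^\theta}$ by the argument of the second estimate at level $\theta$. Its $J_m$-uniform commutator and integration by parts are costed by $\|\partial_x\mathcal HJ_mv\|_{L^\infty}+\|\mathcal HJ_mv\|_{H^{\theta+1}}\lesssim\|v\|_{H^s}$, which requires only $\theta+1<s$. Collecting the pieces and using Young's inequality gives the polynomial prefactor $1+\|u\|_{H^s}^4+\|v\|_{H^s}^4$, the rate $(n\wedge m)^{-2(s-1-\theta)}$, and the term $\|u-v\|_{H^\theta}^2$.
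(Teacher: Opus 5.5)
Your overall route matches the one behind the proof the paper cites for this lemma (Alonso--Miao--Tang). It uses Kato--Ponce for the commutator, integration by parts for the transport term, a uniform-in-$n$ commutator bound for $J_n$, and the rates of Lemma~\ref{Lemma-Jn}\textbf{(3)}. The first estimate is complete, and your splitting and bookkeeping in the difference estimate are correct. The two delicate steps, however, are not justified as you wrote them.

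First, the bound $\|[\mathcal D^sJ_n,f]\partial_xg\|_{L^2}\lesssim\|f_x\|_{L^\infty}\|g\|_{H^s}+\|f\|_{H^s}\|g_x\|_{L^\infty}$ does not follow from boundedness of $\{J_n\}$ in $\mathrm{OP}\mathcal S^0$. Symbol calculus for $[J_n,f]$ needs $f\in C_b^\infty$ and gives constants involving many derivatives of $f$. Here $f=\mathcal Hu$ has only $H^s$ regularity, and the estimate must be tame, with $\|f_x\|_{L^\infty}$ alone in front of $\|g\|_{H^s}$. Split instead as $[\mathcal D^sJ_n,f]=J_n[\mathcal D^s,f]+[J_n,f]\mathcal D^s$. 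Do not use $\mathcal D^s[J_n,f]+[\mathcal D^s,f]J_n$, which would need an $H^s$-level bound on $[J_n,f]\partial_x$. The first term is controlled by Lemma~\ref{KP-commutator} and $\|J_n\|_{\mathscr L(L^2;L^2)}\le1$. The second is controlled by Friedrichs' lemma $\sup_n\|[J_n,f]\partial_xh\|_{L^2}\lesssim\|f_x\|_{L^\infty}\|h\|_{L^2}$ with $h=\mathcal D^sg$. That lemma is proved directly from the convolution kernel $n\rho(n\cdot)$, $\rho=\mathscr F^{-1}j$: integrate by parts in $y$ and use $|f(y)-f(x)|\le\|f_x\|_{L^\infty}|x-y|$.

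Second, in the difference estimate your last term $\langle J_m[f\,\partial_xz],z\rangle_{H^\theta}$, with $f=\mathcal HJ_mv$ and $z=u-v$, is paired with $z$ rather than $J_mz$. So the integration by parts from your second estimate does not apply: $\int f\,\partial_x(\mathcal D^\theta J_mz)\,\mathcal D^\theta z\,\mathrm dx$ is not an exact derivative. Write $z=J_mz+(\mathbf I-J_m)z$ in the second slot. The symmetric part $\langle J_m[f\,\partial_xz],J_mz\rangle_{H^\theta}$ is then handled exactly as in your second estimate at level $\theta$, using $\theta>3/2$ to bound $\|z_x\|_{L^\infty}$ by $\|z\|_{H^\theta}$. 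The remainder is bounded by $\|f\,\partial_xz\|_{H^{s-1}}\|(\mathbf I-J_m)z\|_{H^{2\theta-s+1}}\lesssim m^{-(2s-2\theta-1)}\|v\|_{H^s}\|z\|_{H^s}^2$. This is within $(n\wedge m)^{-2(s-1-\theta)}(1+\|u\|_{H^s}^4+\|v\|_{H^s}^4)$, since $2s-2\theta-1>2(s-1-\theta)$. With these two repairs your argument closes.
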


	We recall the following identity from \cite[Proposition~3.5]{Silvestre-Vicol-2016-TAMS}.
	
	\begin{Lemma}\label{lem:identity}
		Let $f\in H^s(\mathbb R)$ with $s>\frac72$, and assume that $f$ attains a
		global maximum at $z^*\in\mathbb R$. Then
		\begin{equation*}
			\Lambda\bigl((\mathcal Hf)(\Lambda\mathcal Hf)\bigr)(z^*)
			+
			(\mathcal Hf)(z^*)\,\mathcal Hf_{xx}(z^*)
			=
			-\frac12\bigl(\Lambda f(z^*)\bigr)^2
			-
			\mathcal E[f](z^*),
		\end{equation*}
		where, as in \eqref{eq:E-functional},
		\begin{equation*}
			\mathcal E[f](z^*)=
			\left\|
			\frac{f(z^*)-f(\cdot)}{z^*-\cdot}
			\right\|_{\dot H^{1/2}}^2
			\ge0,\quad \quad \|g\|_{\dot H^{1/2}}= \|\Lambda^{1/2}g\|_{L^2}.
		\end{equation*}
	\end{Lemma}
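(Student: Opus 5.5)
The plan is to rewrite the left-hand side as a commutator and then reduce everything to one auxiliary difference quotient. With the paper's conventions, $\mathcal H^2=-\mathbf I$ and $\Lambda=-\mathcal H\partial_x$, so $\Lambda\mathcal H f=\partial_x f$ and $\mathcal Hf_{xx}=\mathcal H\partial_x f_x=-\Lambda f_x$. Write $g=\mathcal Hf$ and $z=z^*$. The left-hand side then becomes
\[
\Lambda(g f_x)(z)-g(z)\Lambda f_x(z)=[\Lambda,g]f_x(z)=\frac1\pi\int_{\mathbb R}\frac{(g(z)-g(y))\,f'(y)}{(z-y)^2}\d y .
\]
Here I use the kernel representation \eqref{eq: Hilbert Lambda kernel}; the terms $g(z)f'(z)$ cancel. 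The integral is absolutely convergent near $y=z$ because $g\in C^1$ and $f'(z)=0$ at the maximum.

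Next I introduce the difference quotient $\varphi(y)=\frac{f(y)-f(z)}{y-z}$. Since $s>7/2$, $\varphi$ is smooth, it decays like $|y|^{-1}$ and lies in $H^1$, and $\varphi(z)=f'(z)=0$. The key algebraic step is the commutator of $\mathcal H$ with multiplication by $(y-z)$. From $f-f(z)=(\cdot-z)\varphi$ I get $\mathcal Hf(y)=(y-z)\mathcal H\varphi(y)+c_0$, where $c_0$ is a constant proportional to $\int\varphi$ (this uses $\mathcal H1=0$ in the appropriate sense, i.e.\ I apply it to $f$ itself). Evaluating at $y=z$ gives $c_0=g(z)$. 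Hence
\[
g(z)-g(y)=(z-y)\,\mathcal H\varphi(y).
\]
I also use $f'=\varphi+(y-z)\varphi'$. Substituting both into the integral splits it into two pieces:
\[
\frac1\pi\int\frac{\mathcal H\varphi\,\varphi}{z-y}\d y-\frac1\pi\int\mathcal H\varphi\,\varphi'\d y .
\]

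The first piece equals $-\mathcal H(\varphi\mathcal H\varphi)(z)$. I will handle it with the Cotlar identity $2\mathcal H(\varphi\mathcal H\varphi)=(\mathcal H\varphi)^2-\varphi^2$, with the sign to be checked for this convention. Since $\varphi(z)=0$ and
\[
\mathcal H\varphi(z)=\frac1\pi\int\frac{f(y)-f(z)}{(y-z)^2}\d y=-\Lambda f(z),
\]
this piece yields $-\tfrac12(\Lambda f(z))^2$. For the second piece I use $\partial_x=\mathcal H\Lambda$ and the $L^2$-isometry and skew-adjointness of $\mathcal H$. This gives $\int\mathcal H\varphi\,\varphi'=\langle\varphi,\Lambda\varphi\rangle_{L^2}=\|\Lambda^{1/2}\varphi\|_{L^2}^2$. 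Finally, $\varphi$ coincides with $\frac{f(z)-f(\cdot)}{z-\cdot}$, so this piece produces $-\mathcal E[f](z^*)$.

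The main obstacle is bookkeeping rather than ideas. First, the normalising constants must match: the factor $1/\pi$ in the second piece against the Fourier normalisation of $\dot H^{1/2}$ and the kernel constant of $\Lambda$. I will recheck this by testing the identity on an explicit maximiser, e.g.\ $f=(1+x^2)^{-1}$, where $\mathcal Hf$ and $\Lambda f$ are explicit. Second, $\varphi\notin L^1$ and $\mathcal Hf$ does not decay integrably, so two steps need justification: the commutator formula for $\mathcal H[(\cdot-z)\varphi]$ and the Cotlar identity applied to $\varphi\in H^1$. I would justify them by truncating at large $|y|$, or by approximating $f$ by Schwartz functions with the same local behaviour at $z$, and then passing to the limit using the regularity $s>7/2$.
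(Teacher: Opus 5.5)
\emph{Overall.} The paper gives no argument for this lemma; it only cites Silvestre--Vicol, Proposition~3.5. So your self-contained computation is a genuinely different route, and up to the last step it is correct:
\begin{itemize}
\item the commutator form $[\Lambda,g]f_x(z^*)$ of the left-hand side;
\item the identity $g(z^*)-g(y)=(z^*-y)\,\mathcal H\varphi(y)$;
\item the Cotlar identity, which for this convention ($\mathcal H=-H$, with $H$ the standard transform) holds with the same sign, $2\mathcal H(\varphi\mathcal H\varphi)=(\mathcal H\varphi)^2-\varphi^2$;
\item the evaluation $\mathcal H\varphi(z^*)=-\Lambda f(z^*)$.
\end{itemize}
Hence the first piece is exactly $-\frac12(\Lambda f(z^*))^2$.

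\emph{The gap.} The final claim, that the second piece ``produces $-\mathcal E[f](z^*)$'', is false. You correctly found $\int\mathcal H\varphi\,\varphi'\,\mathrm dy=\langle\varphi,\Lambda\varphi\rangle_{L^2}=\|\Lambda^{1/2}\varphi\|_{L^2}^2$. So the second piece equals $-\frac1\pi\mathcal E[f](z^*)$, and no normalisation is left to absorb the $\frac1\pi$. The kernel constant $\frac1\pi$ is forced by the symbol $|\xi|$. The statement itself fixes $\|g\|_{\dot H^{1/2}}^2=\langle g,\Lambda g\rangle_{L^2}=\frac1{2\pi}\iint\frac{(g(x)-g(y))^2}{(x-y)^2}\,\mathrm dx\,\mathrm dy$, independently of any Fourier convention.

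\emph{The identity as printed is false.} The sanity check you proposed confirms this. Take $f=(1+x^2)^{-1}$ and $z^*=0$. Then:
\begin{itemize}
\item $\mathcal Hf=-x/(1+x^2)$ and $\Lambda f(0)=1$;
\item $\varphi=\mathcal Hf$ and $\mathcal H\varphi=-f$;
\item the left-hand side is $\frac1\pi\int_{\mathbb R}\frac{-2}{(1+y^2)^3}\,\mathrm dy=-\frac34$;
\item your two pieces are $-\frac12$ and $-\frac14$;
\item but $\mathcal E[f](0)=\langle f,\Lambda f\rangle_{L^2}=\frac1{2\pi}\int_{\mathbb R}\pi^2|\xi|e^{-2|\xi|}\,\mathrm d\xi=\frac\pi4$.
\end{itemize}
The lemma would therefore require $-\frac34=-\frac12-\frac\pi4$, so no proof of the statement as written can exist. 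What your argument does establish, once the truncation details are written out, is
\[
\Lambda\bigl((\mathcal Hf)(\Lambda\mathcal Hf)\bigr)(z^*)+(\mathcal Hf)(z^*)\,\mathcal Hf_{xx}(z^*)=-\frac12\bigl(\Lambda f(z^*)\bigr)^2-\frac1\pi\,\mathcal E[f](z^*).
\]
This is the lemma with $\mathcal E$ renormalised by $\frac1\pi$; presumably a normalisation of the $\dot H^{1/2}$ seminorm was lost when transcribing the cited result. The discrepancy is harmless for Theorem~\ref{Thm-blowup}, which only uses $\mathcal E\ge0$. In Theorem~\ref{Thm-blowup-rate}, however, $\eta$ should then be $\mathcal E_M/(\pi M^2)$ for the stated rate constant to be correct.

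\emph{Justifications.} The points you flagged are routine. You can even avoid the ill-defined constant $\int\varphi$ entirely, since
\[
g(y)-g(z^*)-(y-z^*)\mathcal H\varphi(y)=\frac{f(z^*)}{\pi}\,\pv\int_{\mathbb R}\Bigl(\frac1{w-y}-\frac1{w-z^*}\Bigr)\mathrm dw=0.
\]
Cotlar's identity for $\varphi\in H^1$ holds in $L^2$ between two continuous functions, hence pointwise at $z^*$.
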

	
	\begin{Remark}
		With the convention $\Lambda=-\mathcal H\partial_x$, one has
		\[
		\Lambda\mathcal H=\partial_x,
		\quad
		\Lambda f_x=-\mathcal Hf_{xx}.
		\]
		Therefore, at a maximum point $z^*$,
		\[
		-\Lambda\bigl((\mathcal Hf)f_x\bigr)(z^*)
		+
		(\mathcal Hf)(z^*)\Lambda f_x(z^*)
		=
		\frac12\bigl(\Lambda f(z^*)\bigr)^2
		+
		\mathcal E[f](z^*).
		\]
		This is exactly the sign used in the blow-up proof.
	\end{Remark}

	The following lemma is used to control the lower-order
	part of the commutator $[\Lambda,\sigma]$.
	\begin{Lemma}\label{Lem:sigma-kernel}
		Let $\sigma\in\mathscr S(\mathbb R;\mathbb R)$, and define
		\[
		\mathsf K_\sigma(z,y)
		\triangleq
		\frac{\sigma(z)-\sigma(y)}{(z-y)^2}
		-
		\frac{\sigma'(z)}{z-y}
		=
		\frac{
			\sigma(z)-\sigma(y)-\sigma'(z)(z-y)
		}{
			(z-y)^2
		},
		\qquad z\neq y.
		\]
		Then $\mathsf K_\sigma$ extends uniquely to a function in
		$C^\infty(\mathbb R^2)$, still denoted by $\mathsf K_\sigma$, and
		\[
		\mathsf K_\sigma(z,z)
		=
		-\frac12\sigma''(z),
		\qquad z\in\mathbb R.
		\]
		Moreover, the following estimates hold:
		\begin{equation}\label{eq:kernel-L2-bound}
			\sup_{z\in\mathbb R}
			\left\|
			\mathsf K_\sigma(z,\cdot)
			\right\|_{L_y^2}
			\le
			\frac43
			\left\|\sigma''\right\|_{L^2},
		\end{equation}
		\begin{equation}\label{eq:general-kernel-L1-bound}
			\sup_{z\in\mathbb R}
			\left\|
			\partial_z^a\partial_y^b
			\mathsf K_\sigma(z,\cdot)
			\right\|_{L_y^1}
			\le
			\left(
			\int_0^1
			(1-\vartheta)^{a+1}
			\vartheta^{b-1}
			\d \vartheta
			\right)
			\left\|
			\sigma^{(a+b+2)}
			\right\|_{L^1},\quad a\in\mathbb N_0,\ b\in\mathbb{N},
		\end{equation}
		and
		\begin{align}
			\sup_{z\in\mathbb R}
			\left\|
			\partial_y
			\left[
			\sigma(\cdot)
			\partial_y\mathsf K_\sigma(z,\cdot)
			\right]
			\right\|_{L_y^1} \le
			\frac12
			\|\sigma'\|_{L^\infty}
			\|\sigma^{(3)}\|_{L^1}
			+
			\frac16
			\|\sigma\|_{L^\infty}
			\|\sigma^{(4)}\|_{L^1}.
			\label{eq:weighted-kernel-derivative-L1-bound}
		\end{align}
	\end{Lemma}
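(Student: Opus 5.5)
The plan is to use Taylor's formula with integral remainder in the $z$-variable, since $\mathsf K_\sigma$ is the second-order Taylor remainder divided by $(z-y)^2$. Writing $h=y-z$, we have
\[
\sigma(y)=\sigma(z)+\sigma'(z)(y-z)+(y-z)^2\int_0^1(1-\vartheta)\sigma''(z+\vartheta(y-z))\d\vartheta,
\]
so that for $z\neq y$
\[
\mathsf K_\sigma(z,y)=-\int_0^1(1-\vartheta)\,\sigma''\bigl(z+\vartheta(y-z)\bigr)\d\vartheta .
\]
The right-hand side is manifestly $C^\infty$ on $\mathbb R^2$, because $\sigma\in\mathscr S$ and we may differentiate under the integral. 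Hence it gives the unique continuous (indeed smooth) extension; uniqueness holds because the diagonal has empty interior. At $y=z$ it equals $-\sigma''(z)\int_0^1(1-\vartheta)\d\vartheta=-\frac12\sigma''(z)$.

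For \eqref{eq:kernel-L2-bound}, we use Minkowski's integral inequality in $y$. For fixed $\vartheta>0$, the map $y\mapsto\sigma''(z+\vartheta(y-z))$ has $L^2_y$ norm $\vartheta^{-1/2}\|\sigma''\|_{L^2}$. This gives the bound $\|\sigma''\|_{L^2}\int_0^1(1-\vartheta)\vartheta^{-1/2}\d\vartheta=(2-\frac23)\|\sigma''\|_{L^2}=\frac43\|\sigma''\|_{L^2}$, uniformly in $z$.

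For \eqref{eq:general-kernel-L1-bound}, we differentiate under the integral. The argument $z+\vartheta(y-z)=(1-\vartheta)z+\vartheta y$, so each $\partial_z$ brings a factor $(1-\vartheta)$ and each $\partial_y$ a factor $\vartheta$:
\[
\partial_z^a\partial_y^b\mathsf K_\sigma(z,y)=-\int_0^1(1-\vartheta)^{a+1}\vartheta^{b}\sigma^{(a+b+2)}\bigl((1-\vartheta)z+\vartheta y\bigr)\d\vartheta.
\]
Taking the $L^1_y$ norm via Minkowski, the change of variables contributes $\vartheta^{-1}$, which produces exactly $\int_0^1(1-\vartheta)^{a+1}\vartheta^{b-1}\d\vartheta\,\|\sigma^{(a+b+2)}\|_{L^1}$. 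The condition $b\ge1$ ensures that this integral is finite and that the $\vartheta\to0$ endpoint causes no issue in Minkowski's inequality, since the integrand is bounded by an integrable function. Formally justifying the differentiation under the integral is routine because $\sigma^{(k)}$ is bounded.

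For \eqref{eq:weighted-kernel-derivative-L1-bound}, we expand
\[
\partial_y[\sigma\partial_y\mathsf K_\sigma]=\sigma'(y)\partial_y\mathsf K_\sigma+\sigma(y)\partial_y^2\mathsf K_\sigma .
\]
We bound the first term by $\|\sigma'\|_{L^\infty}$ times \eqref{eq:general-kernel-L1-bound} with $(a,b)=(0,1)$, giving $\int_0^1(1-\vartheta)\d\vartheta=\frac12$ times $\|\sigma^{(3)}\|_{L^1}$. We bound the second term by $\|\sigma\|_{L^\infty}$ times the case $(a,b)=(0,2)$, giving $\int_0^1(1-\vartheta)\vartheta\d\vartheta=\frac16$ times $\|\sigma^{(4)}\|_{L^1}$.

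There is no real obstacle in this argument. The only point needing care is the singular weight $\vartheta^{-1/2}$ (respectively $\vartheta^{-1}$) from the rescaling, which is integrable in the $L^2$ case and is exactly compensated by $\vartheta^b$ with $b\ge1$ in the $L^1$ case. This is why \eqref{eq:general-kernel-L1-bound} is stated only for $b\in\mathbb N$.
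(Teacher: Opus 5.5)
Your proposal is correct and follows the paper's proof essentially step by step. Both arguments use the Taylor integral representation $\mathsf K_\sigma(z,y)=-\int_0^1(1-\vartheta)\sigma''((1-\vartheta)z+\vartheta y)\,\mathrm d\vartheta$, then Minkowski's inequality with the rescaling factors $\vartheta^{-1/2}$ and $\vartheta^{-1}$, differentiation under the integral sign, and finally the product-rule split using the cases $(a,b)=(0,1)$ and $(0,2)$.
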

	
	\begin{proof}
		Taylor's formula with integral remainder gives
		\begin{align*}
			\sigma(y)
			=
			\sigma(z)
			+
			\sigma'(z)(y-z) +
			(y-z)^2
			\int_0^1
			(1-\vartheta)
			\sigma''
			\bigl(
			(1-\vartheta)z+\vartheta y
			\bigr)
			\d \vartheta.
		\end{align*}
		Since $(z-y)^2=(y-z)^2$, it follows that, for $z\neq y$,
		\begin{equation}\label{eq:kernel-integral-representation}
			\mathsf K_\sigma(z,y)
			=
			-\int_0^1
			(1-\vartheta)
			\sigma''
			\bigl(
			(1-\vartheta)z+\vartheta y
			\bigr)
			\d \vartheta.
		\end{equation}
		The right-hand side is well defined for all
		$(z,y)\in\mathbb R^2$ and defines a smooth function. In particular,
		on the diagonal,
		\[
		\mathsf K_\sigma(z,z)
		=
		-\sigma''(z)
		\int_0^1(1-\vartheta)\,\d\vartheta
		=
		-\frac12\sigma''(z).
		\]
		This proves the existence of the smooth extension. Its uniqueness
		follows from continuity, since
		$\{(z,y)\in\mathbb R^2:z\neq y\}$ is dense in $\mathbb R^2$.
		
		We first prove \eqref{eq:kernel-L2-bound}. By Minkowski's integral
		inequality and \eqref{eq:kernel-integral-representation},
		\begin{align*}
			\left\|\mathsf K_\sigma(z,\cdot)\right\|_{L_y^2}
			\le
			\int_0^1
			(1-\vartheta)
			\left\|
			\sigma''
			\bigl(
			(1-\vartheta)z+\vartheta\,\cdot
			\bigr)
			\right\|_{L_y^2}
			\d \vartheta.
		\end{align*}
		For $\vartheta\in(0,1]$, the change of variables
		\[
		w=(1-\vartheta)z+\vartheta y
		\]
		gives
		\[
		\left\|
		\sigma''
		\bigl(
		(1-\vartheta)z+\vartheta\,\cdot
		\bigr)
		\right\|_{L_y^2}
		=
		\vartheta^{-1/2}
		\|\sigma''\|_{L^2}.
		\]
		Hence
		\begin{align*}
			\left\|\mathsf K_\sigma(z,\cdot)\right\|_{L_y^2}
			&\le
			\left(
			\int_0^1
			(1-\vartheta)\vartheta^{-1/2}
			\d \vartheta
			\right)
			\|\sigma''\|_{L^2} =
			\frac43\|\sigma''\|_{L^2}.
		\end{align*}
		The right-hand side is independent of $z$, proving
		\eqref{eq:kernel-L2-bound}.
		
		Next, differentiating \eqref{eq:kernel-integral-representation} under
		the integral sign yields, for every $a,b\in\mathbb N_0$,
		\begin{equation}\label{eq:general-kernel-derivative}
			\partial_z^a\partial_y^b\mathsf K_\sigma(z,y)
			=
			-\int_0^1
			(1-\vartheta)^{a+1}
			\vartheta^b
			\sigma^{(a+b+2)}
			\bigl(
			(1-\vartheta)z+\vartheta y
			\bigr)
			\d \vartheta.
		\end{equation}
		
		Let now $b\ge1$. Applying Minkowski's integral inequality to
		\eqref{eq:general-kernel-derivative} and using the same change of
		variables gives
		\begin{align*}
			\left\|
			\partial_z^a\partial_y^b
			\mathsf K_\sigma(z,\cdot)
			\right\|_{L_y^1} \le
			\int_0^1
			(1-\vartheta)^{a+1}
			\vartheta^b
			\left\|
			\sigma^{(a+b+2)}
			\bigl(
			(1-\vartheta)z+\vartheta\,\cdot
			\bigr)
			\right\|_{L_y^1}
			\d \vartheta =
			\left(
			\int_0^1
			(1-\vartheta)^{a+1}
			\vartheta^{b-1}
			\d \vartheta
			\right)
			\left\|
			\sigma^{(a+b+2)}
			\right\|_{L^1}.
		\end{align*}
		Again, the right-hand side is independent of $z$, which proves
		\eqref{eq:general-kernel-L1-bound}.
		
		Finally, the product rule gives
		\[
		\partial_y
		\left[
		\sigma(y)\partial_y\mathsf K_\sigma(z,y)
		\right]
		=
		\sigma'(y)\partial_y\mathsf K_\sigma(z,y)
		+
		\sigma(y)\partial_y^2\mathsf K_\sigma(z,y).
		\]
		Therefore,
		\begin{align*}
			\left\|
			\partial_y
			\left[
			\sigma(\cdot)\partial_y\mathsf K_\sigma(z,\cdot)
			\right]
			\right\|_{L_y^1} \le
			\|\sigma'\|_{L^\infty}
			\|\partial_y\mathsf K_\sigma(z,\cdot)\|_{L_y^1}
			+
			\|\sigma\|_{L^\infty}
			\|\partial_y^2\mathsf K_\sigma(z,\cdot)\|_{L_y^1}.
		\end{align*}
		Taking $(a,b)=(0,1)$ and $(a,b)=(0,2)$ in
		\eqref{eq:general-kernel-L1-bound}, respectively, yields
		\[
		\sup_{z\in\mathbb R}
		\|\partial_y\mathsf K_\sigma(z,\cdot)\|_{L_y^1}
		\le
		\frac12\|\sigma^{(3)}\|_{L^1},
		\quad
		\sup_{z\in\mathbb R}
		\|\partial_y^2\mathsf K_\sigma(z,\cdot)\|_{L_y^1}
		\le
		\frac16\|\sigma^{(4)}\|_{L^1}.
		\]
		Combining the preceding three estimates proves
		\eqref{eq:weighted-kernel-derivative-L1-bound}.
	\end{proof}
	
	\begin{Lemma}[Fractional Sobolev--Morrey estimate]
		\label{lem:fractional-Sobolev-Morrey-time}
		Let $T>0$, $1<p<\infty$, and
		\[
		0<r<1,
		\qquad
		r p>1.
		\]
		Let $f\in L^p(0,T)$ satisfy
		\[
		[f]_{W^{r,p}(0,T)}^p
		\triangleq
		\int_0^T\int_0^T
		\frac{
			|f(t)-f(t')|^p
		}{
			|t-t'|^{1+r p}
		}
		\,\mathrm dt'\,\mathrm dt
		<\infty.
		\]
		Then $f$ admits a representative, still denoted by $f$, belonging to
		\[
		C^{0,r-\frac1p}([0,T]).
		\]
		Moreover,
		\begin{equation*} 
			|f(t)-f(t')|^p
			\le
			C_{r,p}
			|t-t'|^{r p-1}
			[f]_{W^{r,p}(0,T)}^p,
			\qquad
			t,t'\in[0,T].
		\end{equation*}
	\end{Lemma}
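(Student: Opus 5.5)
The plan is to prove the pointwise bound for almost every $t,t'$ by a Campanato-type averaging argument, and then to define the continuous representative via Lebesgue points. Fix $t<t'$ in $[0,T]$ and set $h=|t-t'|$. We may assume $h>0$ and choose an interval $I\subset[0,T]$ of length $h$ containing both $t$ and $t'$; this is possible since $h\le T$. Write $f_I=\frac1{|I|}\int_I f$. The estimate will come from the triangle inequality
\[
|f(t)-f(t')|\le |f(t)-f_I|+|f_I-f(t')|.
\]
Each term is then bounded by a dyadic chain of averages over nested intervals $I=I_0\supset I_1\supset\cdots$ with $|I_k|=2^{-k}h$, shrinking to the point $t$ (respectively $t'$).

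For consecutive intervals, Jensen's inequality gives
\[
|f_{I_{k+1}}-f_{I_k}|^p\le \frac{2}{|I_k|^2}\int_{I_k}\int_{I_k}|f(s)-f(s')|^p\,\mathrm ds\,\mathrm ds'.
\]
For $s,s'\in I_k$ we have $|s-s'|\le|I_k|$, so $|s-s'|^{-(1+rp)}\ge |I_k|^{-(1+rp)}$, and therefore
\[
|f_{I_{k+1}}-f_{I_k}|^p\le 2|I_k|^{rp-1}[f]^p_{W^{r,p}}.
\]
Taking $p$-th roots gives $|f_{I_{k+1}}-f_{I_k}|\le 2^{1/p}(2^{-k}h)^{r-1/p}[f]_{W^{r,p}}$. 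Summing over $k$ gives a geometric series, since $r-1/p>0$. Hence $\lim_k f_{I_k}$ exists and differs from $f_I$ by at most $C_{r,p}h^{r-1/p}[f]_{W^{r,p}}$.

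At Lebesgue points of $f$, which form a set of full measure, this limit equals $f(t)$. Treating $t'$ in the same way and raising to the $p$-th power yields the displayed inequality on a full-measure set of pairs. It remains to produce the representative. Define $\tilde f(t)=\lim_k f_{I_k(t)}$; the convergence of the dyadic chain shows that this limit exists for every $t$. Moreover, the same estimate applied to any pair of points shows that $\tilde f$ satisfies the Hölder bound everywhere. Indeed, the chain estimate never used that $t$ was a Lebesgue point, only that the averages converge.

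The hard step is making sure the dyadic intervals stay inside $[0,T]$ near the endpoints. This is handled by always choosing subintervals of $I$ that contain the target point, which is possible since $I\subset[0,T]$. One must also check that the resulting limit is independent of the chosen chain. For that, two chains for the same point are compared at comparable scales by the same Jensen argument applied on their union, whose length is at most twice the scale. With both points handled, the bound holds for all $t,t'\in[0,T]$ and $\tilde f\in C^{0,r-\frac1p}([0,T])$, with $C_{r,p}$ depending only on $r,p$.
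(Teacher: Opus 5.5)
Your argument is correct, but it takes a different route from the paper. The paper rescales to $g(x)=f(Tx)$ on $(0,1)$ and uses Jensen's inequality to bound $\|g-\overline g\|_{L^p(0,1)}$ by $[g]_{W^{r,p}(0,1)}$. It then applies the fractional Morrey embedding of Di Nezza--Palatucci--Valdinoci (Theorem~8.2) to $g-\overline g$. Finally, the scaling identity $[g]_{W^{r,p}(0,1)}^p=T^{rp-1}[f]_{W^{r,p}(0,T)}^p$ gives a constant independent of $T$.

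You instead prove the embedding from scratch with a one-dimensional Campanato dyadic chain. Only Jensen's inequality and a geometric series are used, and the estimate involves nothing but the Gagliardo seminorm. Scale invariance, and hence $T$-independence of $C_{r,p}$, is therefore automatic, with no mean subtraction or rescaling. You also get an explicit constant, e.g. $C_{r,p}=\bigl(2^{1+1/p}/(1-2^{-(r-1/p)})\bigr)^p$. The continuous representative is constructed directly as a limit of averages. Your chain-independence check is sound: for intervals $J,J'\ni t$ of comparable length $\ell$, one has $|f_J-f_{J'}|^p\le (|J|+|J'|)^{1+rp}[f]^p_{W^{r,p}}/(|J||J'|)\lesssim \ell^{rp-1}[f]^p_{W^{r,p}}\to0$.

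The paper's proof is shorter but outsources the substance to an embedding theorem on extension domains, which is itself proved via extension and Campanato's characterisation. Your proof is self-contained and uses $f\in L^p$ only through local integrability.
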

	
	\begin{proof}
		Set
		\[
		\overline g\triangleq\int_0^1g(x)\,\d x,\quad
		g(x)\triangleq f(Tx),
		\qquad x\in(0,1).
		\]
		By Jensen's inequality,
		\begin{align*}
			\|g-\overline g\|_{L^p(0,1)}^p
			=
			\int_0^1
			\left|
			\int_0^1\bigl(g(x)-g(y)\bigr)\,\d y
			\right|^p
			\d x \le
			\int_0^1\int_0^1
			|g(x)-g(y)|^p\,\d y\,\d x \le
			[g]_{W^{r,p}(0,1)}^p,
		\end{align*}
		where the last inequality follows from $|x-y|\le1$.
		
		Applying
		\cite[Theorem~8.2]{DiNezza-Palatucci-Valdinoci-2012-BSM}
		to $g-\overline g$ on $(0,1)$ gives a representative satisfying
		\[
		|g(x)-g(y)|^p
		\le
		C_{r,p}|x-y|^{rp-1}
		[g]_{W^{r,p}(0,1)}^p.
		\]
		Moreover,
		\[
		[g]_{W^{r,p}(0,1)}^p
		=
		T^{rp-1}[f]_{W^{r,p}(0,T)}^p.
		\]
		Taking $x=t/T$ and $y=t'/T$ therefore yields
		\[
		|f(t)-f(t')|^p
		\le
		C_{r,p}|t-t'|^{rp-1}
		[f]_{W^{r,p}(0,T)}^p.
		\]
		The constant is independent of $T$.
	\end{proof}

	\section{Local-in-time theory: proof of Theorem~\ref{Thm-local-solution}}\label{Section:LocalTheory}
	In this section we provide a local-in-time theory for \eqref{Target problem SCCF} and prove Theorem \ref{Thm-local-solution}. For convenience, we restate \eqref{Target problem SCCF} as follows
	\begin{equation*}
		v(t)-v_0
		+
		\int_0^t
		\left[
		\mathcal Hv\,\partial_xv
		-
		\frac{1}{2}\mathcal Q^2v
		\right](t')\d t'
		=
		\int_0^t \mathcal Qv(t')\d W(t'),
		\quad
		v(0)=v_0.
	\end{equation*}
	Equivalently,
	\begin{equation}\label{Target problem SCCF differential}
		\d v
		=
		\Big[
		-\mathcal Hv\,\partial_xv
		+
		\frac{1}{2}\mathcal Q^2v
		\Big]\d t
		+
		\mathcal Qv\d W(t),\quad
		v(0)=v_0.
	\end{equation}
	
	The proof below draws upon the recent work \cite{Tang-Wang-2022-arXiv,Alonso-Pang-Tang-2026-JLMS}. Given that some arguments in the proof will also be used in subsequent sections, and the drift and the noise are
	singular at the $H^s$ level and the required cancellation must be preserved
	uniformly under regularisation, we opt to write down the comprehensive proof to ensure both completeness and consistency.

	\subsection{Approximation scheme}
	
	Let $\chi\in C^\infty([0,\infty);[0,1])$ such that
	$\chi=1$ on $[0,1]$ and $\chi=0$ on $[2,\infty)$. For any $R\ge1$, define $\chi_R\in C^\infty([0,\infty);[0,1])$ as
	\[
	\chi_R(r)\triangleq\chi(r/R).
	\]
	Then it is clear that $\|\chi'_R\|_{L^\infty}=\frac{1}{R}\|\chi'\|_{L^\infty}\leq \|\chi'\|_{L^\infty}$.
	
	Let $\{\mathcal Q_n\}_{n\ge1}$ be the regularising approximation of
	$\mathcal Q$ provided by Lemma~\ref{Lemma:Qn}. Let $\{J_n\}_{n\ge1}$ be
	the mollifier defined in \eqref{Define Jn}. For $n,R\ge1$, we approximate
	\eqref{Target problem SCCF differential} by  the following Stratonovich
	problem
	\begin{equation*}
		\d v+
		\chi_R\Big(
		\|\partial_xv-\partial_xv_0\|_{L^\infty}
		+
		\|\mathcal H\partial_xv-\mathcal H\partial_xv_0\|_{L^\infty}
		\Big)
		J_n\big[
		(\mathcal HJ_nv)\,\partial_xJ_nv
		]\d t =
		\mathcal Q_nv\circ{\rm d}W(t),
		\quad
		v(0)=v_0.
	\end{equation*}
	Equivalently,
	\begin{equation}\label{approximation scheme C}
		v(t)-v_0
		=
		\int_0^t
		\mathscr V_{n,R}\bigl(v_0,v(t')\bigr)\d t'
		+
		\int_0^t \mathcal Q_nv(t')\d W(t'),
	\end{equation}
	where, for $a,u\in H^s$,
	\begin{align*}
		\mathscr V_{n,R}(a,u)
		\triangleq{}&-
		\chi_R\Big(
		\|\partial_xu-\partial_xa\|_{L^\infty}
		+
		\|\mathcal H\partial_xu-\mathcal H\partial_xa\|_{L^\infty}
		\Big)
		J_n\big[
		(\mathcal HJ_nu)\,\partial_xJ_nu
		\big]
		+
		\frac12\mathcal Q_n^2u.
	\end{align*}
	
	\begin{Lemma}\label{Lemma: existence of XnR}
		Let the assumptions of Theorem~\ref{Thm-local-solution} hold. For every $n,R\ge1$ and every $H^s$-valued $\mathcal F_0$-measurable random variable $v_0$, the Cauchy problem \eqref{approximation scheme C} admits a unique global solution
		\[
		v_n^{(R)}\in C([0,\infty);H^s)\qquad\pas.
		\]
		Moreover, there is a constant $C>0$, independent of $n$, $R$ and $T$, such that, for every $R\ge1$ and $T>0$,
		\begin{align}
			\sup_{n\ge1}
			\E\left[
			\sup_{t\in[0,T]}
			\|v_n^{(R)}(t)\|_{H^s}^2
			\Big|\F_0
			\right]
			\le
			2\|v_0\|_{H^s}^2
			\exp\left\{
			C
			\Big(
			1+2R
			+\|\partial_xv_0\|_{L^\infty}
			+\|\mathcal H\partial_xv_0\|_{L^\infty}
			\Big)T
			\right\}.
			\label{Xn uniform bound}
		\end{align}
	\end{Lemma}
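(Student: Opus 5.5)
We will view \eqref{approximation scheme C} as an SDE in the Hilbert space $H^s$, first with a deterministic parameter $a\in H^s$ in place of $v_0$ in both the initial condition and the cut-off. We then evaluate at $a=v_0(\omega)$.

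\textbf{Existence and uniqueness.} For fixed $n,R$, the operators $J_n$ and $\mathcal Q_n\in\mathrm{OP}\mathcal S^{-\infty}$ (Lemma~\ref{Lemma:Qn}(1)) are bounded $H^s\to H^{s'}$ for every $s'$. Hence $u\mapsto\frac12\mathcal Q_n^2u$ and $u\mapsto\mathcal Q_nu$ are bounded linear maps on $H^s$. For the nonlinear part, by Lemma~\ref{Lemma-Jn}(3) and the algebra property of $H^{s}$ we have
\[
\|J_n[(\mathcal HJ_nu)\partial_xJ_nu]-J_n[(\mathcal HJ_nw)\partial_xJ_nw]\|_{H^s}\le C_n(\|u\|_{H^s}+\|w\|_{H^s})\|u-w\|_{H^s},
\]
and the cut-off argument is Lipschitz in $u$ since $H^s\hookrightarrow W^{1,\infty}$ and $\mathcal H$ is bounded on $H^s$. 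This gives a locally Lipschitz drift, but it is not globally Lipschitz, since the cut-off controls only $\partial_x u$ and $\mathcal H\partial_x u$ and not $\|u\|_{H^s}$. We therefore solve up to the exit times $\tau_N=\inf\{t:\|u\|_{H^s}>N\}$ using the standard theory for SDEs with locally Lipschitz coefficients. Pathwise uniqueness follows by Gronwall on each stopped interval. Measurable dependence on the parameter $a$ follows from the Picard construction, so substituting $a=v_0$ yields an $\mathcal F_0$-adapted solution. Globality ($\tau_N\to\infty$) will follow from the a priori bound below, which is linear in $\|u\|_{H^s}^2$ and hence gives $\mathbb P(\tau_N\le T\mid\mathcal F_0)\to0$.

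\textbf{Energy estimate.} Apply It\^o's formula to $\|v\|_{H^s}^2$, where $v=v_n^{(R)}$. Using the self-adjointness of $J_n$ in Lemma~\ref{Lemma-Jn}(1), the drift contribution is $-2\chi_R(\cdots)\langle (\mathcal HJ_nv)\partial_xJ_nv,J_nv\rangle_{H^s}$. By the first estimate of Lemma~\ref{Lemma : Huux} applied to $J_nv$ (which is smooth), and since $J_n$ is bounded on $L^\infty$ uniformly in $n$, this is bounded by $C\chi_R(\cdots)(\|\partial_xv\|_{L^\infty}+\|\mathcal H\partial_xv\|_{L^\infty})\|v\|_{H^s}^2$. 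On the support of $\chi_R$ the quantity in parentheses is at most $2R+\|\partial_xv_0\|_{L^\infty}+\|\mathcal H\partial_xv_0\|_{L^\infty}$. This is exactly why the cut-off is centred at $v_0$: the bound becomes independent of $\|v\|_{H^s}$. The noise terms are $(\langle\mathcal Q_n^2v,v\rangle_{H^s}+\|\mathcal Q_nv\|_{H^s}^2)\,\d t+2\langle\mathcal Q_nv,v\rangle_{H^s}\d W$, and by \eqref{Qn cancellation-1} both are controlled by $C\|v\|_{H^s}^2$ uniformly in $n$.

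\textbf{Closing the supremum bound.} Stop at $\tau_N$ and take conditional expectations given $\mathcal F_0$. Handle the martingale with the Burkholder--Davis--Gundy inequality:
\[
\E\Big[\sup_t\Big|\int_0^{t\wedge\tau_N}2\langle\mathcal Q_nv,v\rangle_{H^s}\d W\Big|\,\Big|\,\mathcal F_0\Big]\le \frac12\E\Big[\sup\|v\|_{H^s}^2\,\Big|\,\mathcal F_0\Big]+C\int_0^T\E\Big[\sup_{r\le t\wedge\tau_N}\|v\|_{H^s}^2\,\Big|\,\mathcal F_0\Big]\d t.
\]
Absorbing the first term gives the factor $2\|v_0\|_{H^s}^2$. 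Gronwall then gives \eqref{Xn uniform bound} on $[0,T\wedge\tau_N]$, and letting $N\to\infty$ by monotone convergence gives the full bound.

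\textbf{Main obstacle.} The delicate point is the conditional BDG/Gronwall step without any moment assumption on $v_0$. I would justify it by working with the deterministic parameter $a$, where the estimates are classical, and then using the independence of $\mathcal F_0$ from the Brownian increments. This identifies the conditional expectation with the parameterised expectation evaluated at $a=v_0$.
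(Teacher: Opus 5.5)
Your proposal is correct and follows essentially the same route as the paper. You freeze a deterministic parameter $a\in H^s$, solve the Hilbert-space SDE with a Picard-based jointly measurable solution map, and substitute $a=v_0$. You then apply It\^o's formula to $\|v_n^{(R)}\|_{H^s}^2$, using Lemma~\ref{Lemma : Huux} on $J_nv$ together with the cut-off bound and \eqref{Qn cancellation-1}, and close with a localised conditional BDG/absorption step, Gronwall and monotone convergence.

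The only minor difference is how you get global existence. The paper obtains it for each frozen $a$ directly from the linear growth bound $\|\mathscr V_{n,R}(a,u)\|_{H^s}\le C_{n,R,a}(1+\|u\|_{H^s})$, which holds because $\|\partial_xu\|_{L^\infty}$ is bounded on the support of $\chi_R$. You instead derive it from the energy bound via exit probabilities, and both arguments are valid.
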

	
	\begin{proof}
		Fix $n,R\ge1$. Since
		$s>\frac32+\max\{2\delta_{\mathcal Q},1\}\ge\frac52$, the embeddings
		$H^s\hookrightarrow W^{1,\infty}$ and the boundedness of $\mathcal H$ on
		$H^s$ show that the maps
		\[
		(a,u)\longmapsto
		\|\partial_xu-\partial_xa\|_{L^\infty},
		\qquad
		(a,u)\longmapsto
		\|\mathcal H\partial_xu-\mathcal H\partial_xa\|_{L^\infty}
		\]
		are continuous on $H^s\times H^s$. The operators $J_n$ and
		$\mathcal Q_n$ are smoothing and bounded on $H^s$, and hence
		$\mathscr V_{n,R}:H^s\times H^s\to H^s$ is jointly continuous. For each
		fixed parameter $a\in H^s$, the map
		$u\mapsto\mathscr V_{n,R}(a,u)$ is locally Lipschitz on $H^s$.
		Moreover, on the support of the cut-off,
		\[
		\|\partial_xu\|_{L^\infty}
		+
		\|\mathcal H\partial_xu\|_{L^\infty}
		\le
		2R
		+
		\|\partial_xa\|_{L^\infty}
		+
		\|\mathcal H\partial_xa\|_{L^\infty},
		\]
		so that
		\begin{equation*} 
			\|\mathscr V_{n,R}(a,u)\|_{H^s}
			\le
			C_{n,R,a}\bigl(1+\|u\|_{H^s}\bigr).
		\end{equation*}
		The diffusion map $u\mapsto\mathcal Q_nu$ is bounded and linear on
		$H^s$.
		
		We first freeze the parameter. For every deterministic $a\in H^s$,
		standard Hilbert-space SDE theory (see, for instance, \cite{Gawarecki-Mandrekar-2011-Book,Kallianpur-Xiong-1995-book}) therefore gives a pathwise unique global
		strong solution $U_{n,R}^{a}\in C([0,\infty);H^s)$ to
		\begin{equation}\label{eq:parameterised-approximate-SDE}
			U_{n,R}^{a}(t)-a
			=
			\int_0^t
			\mathscr V_{n,R}\bigl(a,U_{n,R}^{a}(t')\bigr)\,\mathrm{d} t'
			+
			\int_0^t\mathcal Q_nU_{n,R}^{a}(t')\d W(t').
		\end{equation}  
		Using the Picard construction for the truncated coefficients, one may
		choose a non-anticipative Borel solution map
		\[
		\Phi_{n,R}:H^s\times C([0,\infty);\mathbb R)
		\longrightarrow
		C([0,\infty);H^s)
		\]
		such that
		\[
		U_{n,R}^a=\Phi_{n,R}(a,W)
		\]
		simultaneously for all $a\in H^s$ outside a single null set.
		Consequently,
		\[
		v_n^{(R)}=\Phi_{n,R}(v_0,W)
		\]
		is progressively measurable and solves
		\eqref{approximation scheme C}.
		In particular, one obtains a version such that, for every
		$T>0$,
		\[
		(a,\omega)
		\longmapsto
		U_{n,R}^{a}(\cdot,\omega)
		\]
		is $\mathcal B(H^s)\otimes\mathcal F_T$-measurable as a map into
		$C([0,T];H^s)$, and its restriction to $[0,t]$ is
		$\mathcal B(H^s)\otimes\mathcal F_t$-measurable for every $t\le T$.
		
		We may therefore substitute the $\mathcal F_0$-measurable parameter
		$a=v_0(\omega)$ and define
		\begin{equation*} 
			v_n^{(R)}(t,\omega)
			\triangleq
			U_{n,R}^{v_0(\omega)}(t,\omega).
		\end{equation*}
		The preceding joint measurability shows that $v_n^{(R)}$ is adapted and
		progressively measurable, and it has continuous $H^s$-valued paths.
		Because $W$ is a Brownian motion relative to
		$\{\mathcal F_t\}_{t\ge0}$, its future increments are independent of
		$\mathcal F_0$. Hence, under the conditional probability given
		$\mathcal F_0$, the value $a=v_0$ is frozen and the driving process remains
		Brownian. Equation \eqref{eq:parameterised-approximate-SDE} therefore
		yields \eqref{approximation scheme C}. More explicitly, applying any
		deterministic-parameter estimate to $U_{n,R}^{a}$, multiplying by
		$\mathbf 1_A$ with $A\in\mathcal F_0$, and then substituting $a=v_0$
		gives the corresponding conditional estimate by the defining property of
		conditional expectation. Pathwise uniqueness for the parameterised
		equation \eqref{eq:parameterised-approximate-SDE} also gives pathwise uniqueness for the random initial data.
		Consistency in $T$ yields the asserted the global existence.
		
		Although $v_n^{(R)}$ exists globally and has continuous $H^s$-valued paths,
		the conditional integrability of the martingale supremum is not known
		before the desired conditional second-moment estimate has been
		proved. We therefore need a localisation argument. For every integer $L\ge2$,
		define
		\begin{equation}\label{tau-L-nR-definition}
			\tau_L^{n,R}
			\triangleq
			L\wedge
			\inf\bigl\{
			t\ge0:\|v_n^{(R)}(t)\|_{H^s}\ge L
			\bigr\}.
		\end{equation}
		Since $v_n^{(R)}$ is adapted and has continuous $H^s$-valued paths,
		$\tau_L^{n,R}$ is a bounded stopping time. Moreover, for fixed $n$ and
		$R$,
		\begin{equation}\label{tau-L-nR-limit}
			\tau_L^{n,R}\uparrow\infty
			\qquad\text{almost surely as }L\to\infty.
		\end{equation}
		Indeed, every continuous path is bounded on each compact time interval.

		Fix $n,R\ge1$.
		Applying It\^o's formula to the stopped process gives,
		for every $t\ge0$,
		\begin{align}
			\|v_n^{(R)}(t\wedge\tau_L^{n,R})\|_{H^s}^2
			=\ &
			\|v_0\|_{H^s}^2
			+
			\int_0^{t\wedge\tau_L^{n,R}}
			\left[
			2\IP{\mathscr V_{n,R}(v_0,v_n^{(R)}(t')),v_n^{(R)}(t')}_{H^s}
			+
			\|\mathcal Q_nv_n^{(R)}(t')\|_{H^s}^2
			\right]\d t'
			\notag\\
			&+
			2\int_0^{t\wedge\tau_L^{n,R}}
			\IP{
				\mathcal Q_nv_n^{(R)}(t'),v_n^{(R)}(t')
			}_{H^s}
			\d W(t')
			\label{eq:Ito energy approximate}
		\end{align}
		
		We first estimate the drift integrand.
		Using the properties of $J_n$ in Lemma \ref{Lemma-Jn} and  \eqref{Qn cancellation-1},  we obtain
		\begin{align}
			&\left|2\IP{\mathscr V_{n,R}(v_0,v_n^{(R)}(t')),v_n^{(R)}(t')}_{H^s}+\|\mathcal Q_nv_n^{(R)}(t')\|_{H^s}^2\right|\notag\\
			\lesssim\ & (1+2R
			+\|\partial_xv_0\|_{L^\infty}
			+\|\mathcal H\partial_xv_0\|_{L^\infty})\|v_n^{(R)}(t')\|^2_{H^s}+\left|\IP{\mathcal Q_n^2v_n^{(R)}(t'),v_n^{(R)}(t')}_{H^s}+\|\mathcal Q_nv_n^{(R)}(t')\|_{H^s}^2\right|\notag\\
			\leq\ & C(1+2R
			+\|\partial_xv_0\|_{L^\infty}
			+\|\mathcal H\partial_xv_0\|_{L^\infty})\|v_n^{(R)}(t')\|^2_{H^s},\quad n\ge1,\label{Ito-1}
		\end{align}
		and
		\begin{equation}
			\label{Ito-2}
			\left| \IP{\mathcal Q_nv_n^{(R)}(t'),v_n^{(R)}(t')}_{H^s} \right|^2 \leq
			C \|v_n^{(R)}(t')\|_{H^s}^4,\quad n\ge1,
		\end{equation}
		where the constant $C$ is independent of $n$.

		The stopped stochastic integral in \eqref{eq:Ito energy approximate} is a square-integrable
		martingale.  Therefore, for $T>0$, the conditional Burkholder--Davis--Gundy (BDG) inequality and \eqref{Ito-2} yield
		\begin{align*}
			\E\left[
			\sup_{0\le t\le T}
			\bigg|2\int_0^{t\wedge\tau_L^{n,R}}
			\IP{
				\mathcal Q_nv_n^{(R)}(t'),v_n^{(R)}(t')
			}_{H^s}
			\d W(t')\bigg|
			\bigg|\mathcal F_0
			\right]
			\leq
			C\E\left[
			\left(
			\int_0^{T\wedge\tau_L^{n,R}}
			\|v_n^{(R)}(t')\|_{H^s}^4
			\d t'
			\right)^{1/2}
			\bigg|\mathcal F_0
			\right].
		\end{align*}
		For completeness, define the stopped martingale
		\[
		\mathfrak M_{n,R,L}(t)
		\triangleq
		2\int_0^{t\wedge\tau_L^{n,R}}
		\left\langle
		\mathcal Q_nv_n^{(R)}(r),v_n^{(R)}(r)
		\right\rangle_{H^s}
		\d W(r).
		\]
		For every $A\in\mathcal F_0$, the process
		$\mathbf 1_A\mathfrak M_{n,R,L}$ is again a square-integrable martingale.
		Applying the ordinary BDG inequality to this process and then using the
		defining property of conditional expectation gives the conditional BDG
		inequality used above.
		
		Pathwise,
		\begin{align*}
			&
			\left(
			\int_0^{T\wedge\tau_L^{n,R}}
			\|v_n^{(R)}(t')\|_{H^s}^4
			\d t'
			\right)^{1/2}
			\leq
			\left(
			\sup_{0\le t\le T}
			\|v_n^{(R)}(t\wedge\tau_L^{n,R})\|_{H^s}^2
			\right)^{1/2}
			\left(
			\int_0^{T\wedge\tau_L^{n,R}}
			\|v_n^{(R)}(t')\|_{H^s}^2
			\d t'
			\right)^{1/2}.
		\end{align*}
		Consequently, Young's inequality implies
		\begin{align}
			&\E\left[
			\sup_{0\le t\le T}
			\bigg|2\int_0^{t\wedge\tau_L^{n,R}}
			\IP{
				\mathcal Q_nv_n^{(R)}(t'),v_n^{(R)}(t')
			}_{H^s}
			\d W(t')\bigg|
			\bigg|\mathcal F_0
			\right]\notag\\
			\leq \ &
			\frac{1}{2}\E\left[\sup_{0\le t\le T}
			\|v_n^{(R)}(t\wedge\tau_L^{n,R})\|_{H^s}^2\Big|\mathcal F_0\right]+
			C\E\left[
			\int_0^{T\wedge\tau_L^{n,R}}
			\|v_n^{(R)}(t')\|_{H^s}^2
			\d t'
			\bigg|\mathcal F_0
			\right].\label{stopped-BDG-estimate}
		\end{align}

		Combining \eqref{eq:Ito energy approximate}, \eqref{Ito-1} and
		\eqref{stopped-BDG-estimate}, using the conditional Tonelli's theorem, and absorbing the first term on the right-hand side, we obtain
		\begin{align*}
			&\E\left[
			\sup_{t\in[0,T]}
			\|v_n^{(R)}(t\wedge\tau_L^{n,R})\|_{H^s}^2
			\Big|\F_0
			\right]
			-
			2\|v_0\|_{H^s}^2\\
			\le\ &
			C
			\Big(
			1+2R
			+\|\partial_xv_0\|_{L^\infty}
			+\|\mathcal H\partial_xv_0\|_{L^\infty}
			\Big)
			\int_0^T
			\E\left[
			\sup_{t'\in[0,t]}
			\|v_n^{(R)}(t'\wedge\tau_L^{n,R})\|_{H^s}^2
			\Big|\F_0
			\right]\d t.
		\end{align*}
		Since $ 1+2R
		+\|\partial_xv_0\|_{L^\infty}
		+\|\mathcal H\partial_xv_0\|_{L^\infty}$ is $\mathcal F_0$-measurable, Gronwall's
		inequality can be applied pathwise to this conditional estimate.
		Therefore,
		\begin{equation}\label{Xn uniform bound L}
			\E\left[
			\sup_{t\in[0,T]}
			\|v_n^{(R)}(t\wedge\tau_L^{n,R})\|_{H^s}^2
			\Big|\F_0
			\right]
			\le
			2\|v_0\|_{H^s}^2
			\exp\left\{
			C
			\Big(
			1+2R
			+\|\partial_xv_0\|_{L^\infty}
			+\|\mathcal H\partial_xv_0\|_{L^\infty}
			\Big)T
			\right\},
			\quad T>0,
		\end{equation}
		where the constant $C$ is independent of $n$, $R$, $L$, and $T$.
		
		Finally, fix $n$ and $R$. By \eqref{tau-L-nR-limit},
		\begin{equation*}
			\sup_{0\le t\le T}
			\|v_n^{(R)}(t\wedge\tau_L^{n,R})\|_{H^s}^2
		\to
			\sup_{0\le t\le T}
			\|v_n^{(R)}(t)\|_{H^s}^2\quad \pas
		\end{equation*}
		The conditional monotone convergence theorem applied to
		\eqref{Xn uniform bound L} gives \eqref{Xn uniform bound}.
	\end{proof}
	
	\subsection{Cauchy argument on approximate solutions}
	
	\begin{Lemma}\label{lem:Cauchy argument}
		Let the assumptions of Theorem~\ref{Thm-local-solution} hold and let
		\[
		\theta\in\left(\frac32,\,
		s-\max\{2\delta_{\mathcal Q},1\}\right).
		\]
		Let $v_n^{(R)}$ be the approximate solution  obtained in Lemma \ref{Lemma: existence of XnR}.
		Then the following properties hold almost surely:
		\begin{enumerate}[label={\bf (\arabic*)},leftmargin=0.79cm]\setlength\itemsep{0.2em}

			\item\label{Convergence of X-n TnmN}
			Let
			\[
			\tau_N^{n,m}(R)
			\triangleq
			N
			\wedge
			\inf\left\{
			t\ge0:
			\|v_n^{(R)}(t)\|_{H^s}
			\vee
			\|v_m^{(R)}(t)\|_{H^s}
			\ge N
			\right\},
			\quad n,m\ge1,\quad N\ge1.
			\]
			Then, for every $N>0$,
			\begin{equation}
				\label{Cauchy in E-M}
				\lim_{n\to\infty}
				\sup_{m\ge n}
				\E\left[
				\sup_{t\in[0,\tau_N^{n,m}(R)]}
				\|v_n^{(R)}(t)-v_m^{(R)}(t)\|_{H^\theta}^2
				\Big|\F_0
				\right]
				=
				0.
			\end{equation}
			
			\item\label{Convergence of X-n T}
			There exists an $\mathcal F_t$-progressively measurable $H^s$-valued process
			$v^{(R)}=(v^{(R)}(t))_{t\ge0}$
			satisfying, for every $T>0$,
			\begin{equation}
				\label{X Hs bound}
				\E\left[
				\sup_{t\in[0,T]}
				\|v^{(R)}(t)\|_{H^s}^2
				\Big|\F_0
				\right]
				\le
				2\|v_0\|_{H^s}^2
				\exp\left\{
				C
				\Big(
				1+2R
				+\|\partial_xv_0\|_{L^\infty}
				+\|\mathcal H\partial_xv_0\|_{L^\infty}
				\Big)T
				\right\}.
			\end{equation}
			Moreover, for a subsequence of $\{v_n^{(R)}\}_{n\ge1}$, still denoted by
			$\{v_n^{(R)}\}_{n\ge1}$, and for every $s'\in[\theta,s)$,
			\begin{equation}
				\label{Xn to X}
				\lim_{n\to\infty}
				\sup_{t\in[0,T]}
				\|v_n^{(R)}(t)-v^{(R)}(t)\|_{H^{s'}}
				=
				0,
				\quad
				T>0.
			\end{equation}
		\end{enumerate}
	\end{Lemma}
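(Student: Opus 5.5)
For part \textbf{(1)}, we fix $R$ and $N$ and set $w\triangleq v_n^{(R)}-v_m^{(R)}$, which satisfies
\[
\d w=\bigl[\mathscr V_{n,R}(v_0,v_n^{(R)})-\mathscr V_{m,R}(v_0,v_m^{(R)})\bigr]\d t+\bigl(\mathcal Q_nv_n^{(R)}-\mathcal Q_mv_m^{(R)}\bigr)\d W(t).
\]
We apply It\^o's formula to $\|w\|_{H^\theta}^2$ up to the stopping time $\tau_N^{n,m}(R)$, on which both $H^s$ norms are bounded by $N$. The resulting terms are handled as follows.

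\emph{Transport drift.} Write the cut-off nonlinear term as $\chi_R(\cdot_n)F_n-\chi_R(\cdot_m)F_m$, with $F_n=J_n[(\mathcal HJ_nv_n^{(R)})\partial_xJ_nv_n^{(R)}]$, and split it as $\chi_R(\cdot_n)(F_n-F_m)+(\chi_R(\cdot_n)-\chi_R(\cdot_m))F_m$.
\begin{itemize}[leftmargin=0.79cm]
\item The first piece is bounded by the third estimate of Lemma~\ref{Lemma : Huux}, which gives a bound $C_N\bigl((n\wedge m)^{-2(s-1-\theta)}+\|w\|_{H^\theta}^2\bigr)$.
\item For the second piece, we use that $\chi_R$ is Lipschitz and that $H^\theta\hookrightarrow W^{1,\infty}$ fails only marginally. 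Since $\theta>3/2$, $\|\partial_xw\|_{L^\infty}+\|\mathcal H\partial_xw\|_{L^\infty}\lesssim\|w\|_{H^\theta}$. Together with $\|F_m\|_{H^\theta}\lesssim N^2$, this gives a bound $C_N\|w\|_{H^\theta}^2$.
\end{itemize}

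\emph{Noise terms.} The correction $\frac12\langle\mathcal Q_n^2v_n-\mathcal Q_m^2v_m,w\rangle_{H^\theta}$ combines with the It\^o term $\|\mathcal Q_nv_n-\mathcal Q_mv_m\|_{H^\theta}^2$. Lemma~\ref{Lemma:Qn}\textbf{(4)} with $r=s>\theta+2\delta_{\mathcal Q}$ bounds this combination by $C\bigl((n\wedge m)^{-(s-\theta-\delta_{\mathcal Q})}N^2+\|w\|_{H^\theta}^2\bigr)$. For the martingale term, we use the conditional BDG inequality. Its integrand $\langle\mathcal Q_nv_n-\mathcal Q_mv_m,w\rangle_{H^\theta}$ is controlled by Lemma~\ref{Lemma:Qn}\textbf{(3)}, and we absorb half of $\E[\sup\|w\|^2_{H^\theta}|\F_0]$ exactly as in the proof of Lemma~\ref{Lemma: existence of XnR}.

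\emph{Conclusion of (1).} Since $w(0)=0$, conditional Gronwall on $[0,\tau_N^{n,m}(R)]$ (recall $\tau_N^{n,m}(R)\le N$) gives
\[
\E\Bigl[\sup_{t\le\tau_N^{n,m}(R)}\|w\|_{H^\theta}^2\Big|\F_0\Bigr]\le C(N,R,v_0)\,(n\wedge m)^{-\gamma},\qquad \gamma>0,
\]
which yields \eqref{Cauchy in E-M}.

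For part \textbf{(2)}, the main obstacle is removing the stopping time, since $\tau_N^{n,m}(R)$ depends on both $n$ and $m$. We first show that, for each $T$,
\[
\lim_{N\to\infty}\sup_{n,m}\mathbb P\bigl(\tau_N^{n,m}(R)<T\,\big|\,\F_0\bigr)=0 .
\]
This follows from Markov's inequality and the uniform bound \eqref{Xn uniform bound}, because $\{\tau_N^{n,m}<T\}\subset\{\sup_{[0,T]}\|v_n\|_{H^s}\ge N\}\cup\{\sup_{[0,T]}\|v_m\|_{H^s}\ge N\}$. Combining this with \eqref{Cauchy in E-M}, we split
\[
\mathbb P\bigl(\sup_{[0,T]}\|w\|_{H^\theta}>\epsilon\,\big|\,\F_0\bigr)\le\mathbb P\bigl(\tau_N^{n,m}<T\,\big|\,\F_0\bigr)+\epsilon^{-2}\,\E\bigl[\text{stopped sup}\,\big|\,\F_0\bigr].
\]
Choosing $N$ large first and then $n,m$ large, this shows that $\{v_n^{(R)}\}$ is Cauchy in probability in $C([0,T];H^\theta)$, conditionally on $\F_0$. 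Taking expectations and using dominated convergence removes the conditioning. A diagonal argument over $T\in\mathbb N$ then yields a subsequence converging almost surely in $C([0,T];H^\theta)$ for every $T$ to a progressively measurable limit $v^{(R)}$.

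It remains to establish \eqref{X Hs bound} and \eqref{Xn to X}.
\begin{itemize}[leftmargin=0.79cm]
\item \emph{The $H^s$ bound.} By weak lower semicontinuity of the $H^s$ norm along $H^\theta$-convergent sequences, $\sup_{[0,T]}\|v^{(R)}\|_{H^s}\le\liminf_n\sup_{[0,T]}\|v_n^{(R)}\|_{H^s}$. Conditional Fatou applied to \eqref{Xn uniform bound} then gives \eqref{X Hs bound}.
\item \emph{Convergence in $H^{s'}$.} For $s'\in[\theta,s)$, the interpolation inequality $\|f\|_{H^{s'}}\le\|f\|_{H^\theta}^{1-\lambda}\|f\|_{H^s}^{\lambda}$ applies to $f=v_n^{(R)}-v^{(R)}$. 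The $H^s$ factor is almost surely bounded along the subsequence, because the uniform bound \eqref{Xn uniform bound} with a Borel--Cantelli type choice of subsequence (or Fatou applied to $\liminf$) keeps $\sup_n\sup_{[0,T]}\|v_n^{(R)}\|_{H^s}$ finite along a further subsequence. Hence the $H^{s'}$ distance tends to zero, giving \eqref{Xn to X}.
\end{itemize}
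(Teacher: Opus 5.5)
Part (1) and the construction of $v^{(R)}$ in part (2) essentially follow the paper. This includes the splitting of the cut-off nonlinearity, Lemma~\ref{Lemma:Qn}\textbf{(4)} for the combined correction, conditional BDG plus Gronwall, the exit-probability bound, Cauchy in conditional probability, removing the conditioning, and extracting an a.s.\ convergent subsequence in $C([0,T];H^\theta)$. Bounding the martingale integrand directly with Lemma~\ref{Lemma:Qn}\textbf{(3)}, instead of the paper's splitting $\mathcal Q_nZ+(\mathcal Q_n-\mathcal Q_m)v_m^{(R)}$, is also fine. One small slip: $H^\theta\hookrightarrow W^{1,\infty}$ does hold for $\theta>3/2$; it does not fail ``marginally''.

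The genuine gap is in the final step, \eqref{Xn to X}. You assert that \eqref{Xn uniform bound}, via a Borel--Cantelli choice or Fatou, keeps $\sup_k\sup_{t\in[0,T]}\|v_{n_k}^{(R)}(t)\|_{H^s}$ a.s.\ finite along a further subsequence. Neither tool gives this.
\begin{itemize}
\item A uniform second-moment bound does not imply a.s.\ boundedness along any deterministic subsequence. For example, i.i.d.\ exponential random variables are bounded in $L^2$, yet every subsequence has supremum $+\infty$ a.s.
\item Borel--Cantelli only yields eventual bounds $\sup_t\|v_{n_k}^{(R)}\|_{H^s}\le M_k$ with $M_k\to\infty$, not a uniform bound.
\item Fatou only gives $\liminf_n\sup_t\|v_n^{(R)}(t)\|_{H^s}<\infty$, i.e.\ boundedness along an $\omega$-dependent subsequence. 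That is not a subsequence in the sense of \eqref{Xn to X}. It is also useless in the next lemma, where the stochastic integrals $\int_0^t\mathcal Q_nv_n^{(R)}\,\d W$ must be passed to the limit along a fixed sequence of processes.
\end{itemize}

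The repair, which is the paper's route, is to work in probability.
\begin{itemize}
\item By conditional Chebyshev with \eqref{Xn uniform bound} and \eqref{X Hs bound}, $\sup_t\|v_n^{(R)}-v^{(R)}\|_{H^s}$ is bounded in conditional probability, uniformly in $n$.
\item Inserting this and the a.s.\ $H^\theta$ convergence into the interpolation inequality gives convergence in probability in $C([0,T];H^{s'})$.
\item A further diagonal extraction over $T\in\mathbb N$ and a countable dense set of exponents $\zeta\in(\theta,s)$, followed by the embedding $H^\zeta\hookrightarrow H^{s'}$ for $\zeta>s'$, produces one deterministic subsequence that converges a.s.\ for every $s'\in[\theta,s)$.
\end{itemize}
Your Borel--Cantelli instinct can be made to work only if balanced correctly. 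One would choose $n_k$ so that, eventually, $\sup_t\|v_{n_k}^{(R)}\|_{H^s}\le M_k$ and $\sup_t\|v_{n_k}^{(R)}-v^{(R)}\|_{H^\theta}\le\epsilon_k$, with $\epsilon_k^\lambda M_k^{1-\lambda}\to0$ for all $\lambda\in(0,1]$, e.g.\ $M_k=2^k$, $\epsilon_k=2^{-k^2}$.

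You also omit two measurability points that the paper treats:
\begin{itemize}
\item $\sup_{t\le T}\|v^{(R)}(t)\|_{H^s}$ must be measurable before conditional Fatou applies; this follows from lower semicontinuity in $t$.
\item $v^{(R)}$ must be shown progressively measurable as an $H^s$-valued process, not merely as an $H^\theta$-valued one.
\end{itemize}
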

	
	\begin{proof}
		We divide the proof into two parts.

		\textit{\underline{Proof of \ref{Convergence of X-n TnmN}}.}
		For $n,m\ge1$, set
		\[
		Z_{n,m}^{(R)}(t)
		\triangleq
		v_n^{(R)}(t)-v_m^{(R)}(t).
		\]
		Subtracting the two approximate equations, we obtain
		\begin{equation*}
			Z_{n,m}^{(R)}(t)
			=
			\int_0^t
			\mathfrak i_1^{(n,m,R)}(t')\d t'
			+
			\int_0^t
			\mathfrak i_2^{(n,m,R)}(t')\d W(t'),
		\end{equation*}
		where
		\begin{align*}
			\mathfrak i_1^{(n,m,R)}
			=\ &
			-\chi_R\Big(
			\|\partial_xv_n^{(R)}-\partial_xv_0\|_{L^\infty}
			+
			\|\mathcal H\partial_xv_n^{(R)}
			-\mathcal H\partial_xv_0\|_{L^\infty}
			\Big)
			J_n\big[
			(\mathcal HJ_nv_n^{(R)})\,
			\partial_xJ_nv_n^{(R)}
			\big]
			\\
			&+
			\chi_R\Big(
			\|\partial_xv_m^{(R)}-\partial_xv_0\|_{L^\infty}
			+
			\|\mathcal H\partial_xv_m^{(R)}
			-\mathcal H\partial_xv_0\|_{L^\infty}
			\Big)
			J_m\big[
			(\mathcal HJ_mv_m^{(R)})\,
			\partial_xJ_mv_m^{(R)}
			\big]
			\\
			&+
			\frac12\big(
			\mathcal Q_n^2v_n^{(R)}
			-
			\mathcal Q_m^2v_m^{(R)}
			\big)
		\end{align*}
		and
		\[
		\mathfrak i_2^{(n,m,R)}
		\triangleq
		\mathcal Q_nv_n^{(R)}
		-
		\mathcal Q_mv_m^{(R)}.
		\]
		
		Let $T\in[0,N]$. Since
		$Z_{n,m}^{(R)}(0)=0$, It\^o's formula applied up to
		$t\wedge\tau_N^{n,m}(R)$ gives, for every $t\in[0,T]$,
		\begin{align}
			\|Z_{n,m}^{(R)}(t\wedge\tau_N^{n,m}(R))\|_{H^\theta}^2
			=\ &
			\int_0^{t\wedge\tau_N^{n,m}(R)}
			\left[
			2\IP{
				\mathfrak i_1^{(n,m,R)}(t'),
				Z_{n,m}^{(R)}(t')
			}_{H^\theta}
			+
			\|\mathfrak i_2^{(n,m,R)}(t')\|_{H^\theta}^2
			\right]\d t'
			\notag\\
			&+
			2\int_0^{t\wedge\tau_N^{n,m}(R)}
			\IP{
				\mathfrak i_2^{(n,m,R)}(t'),
				Z_{n,m}^{(R)}(t')
			}_{H^\theta}
			\d W(t').
			\label{Znm-Ito}
		\end{align}

		We first estimate the drift term. In the difference of the two cut-off
		nonlinearities, add and subtract
		\begin{align*}
			\chi_R\Big(
			\|\partial_xv_n^{(R)}-\partial_xv_0\|_{L^\infty}
			+
			\|\mathcal H\partial_xv_n^{(R)}
			-\mathcal H\partial_xv_0\|_{L^\infty}
			\Big)
			J_m\big[
			(\mathcal HJ_mv_m^{(R)})\,
			\partial_xJ_mv_m^{(R)}
			\big].
		\end{align*}
		The part with the same cut-off factor is controlled by
		Lemma~\ref{Lemma : Huux}. For the remaining part, the Lipschitz
		continuity of $\chi_R$, $\|\chi'_R\|_{L^\infty}\leq \|\chi'\|_{L^\infty}$, and
		$H^\theta\hookrightarrow W^{1,\infty}$ yield
		\begin{align*}
			&\Bigg|
			\chi_R\Big(
			\|\partial_xv_n^{(R)}-\partial_xv_0\|_{L^\infty}
			+
			\|\mathcal H\partial_xv_n^{(R)}
			-\mathcal H\partial_xv_0\|_{L^\infty}
			\Big) -
			\chi_R\Big(
			\|\partial_xv_m^{(R)}-\partial_xv_0\|_{L^\infty}
			+
			\|\mathcal H\partial_xv_m^{(R)}
			-\mathcal H\partial_xv_0\|_{L^\infty}
			\Big)
			\Bigg|
			\\
			\lesssim\ &
			\|\partial_xZ_{n,m}^{(R)}\|_{L^\infty}
			+
			\|\mathcal H\partial_xZ_{n,m}^{(R)}\|_{L^\infty}
			\lesssim
			\|Z_{n,m}^{(R)}\|_{H^\theta}.
		\end{align*}
		Since $s>\theta+1$, the Sobolev product estimate also gives, uniformly
		in $m$,
		\[
		\left\|
		J_m\big[
		(\mathcal HJ_mv_m^{(R)})\,
		\partial_xJ_mv_m^{(R)}
		\big]
		\right\|_{H^\theta}
		\lesssim
		\|v_m^{(R)}\|_{H^s}^2
		\le C_N.
		\]
		Consequently, for $0\le t\le\tau_N^{n,m}(R)$,
		\begin{align}
			&\Bigg|
			\Bigg\langle
			\chi_R\Big(
			\|\partial_xv_n^{(R)}-\partial_xv_0\|_{L^\infty}
			+
			\|\mathcal H\partial_xv_n^{(R)}
			-\mathcal H\partial_xv_0\|_{L^\infty}
			\Big)
			J_n\big[
			(\mathcal HJ_nv_n^{(R)})\,
			\partial_xJ_nv_n^{(R)}
			\big]
			\notag\\
			&\qquad-
			\chi_R\Big(
			\|\partial_xv_m^{(R)}-\partial_xv_0\|_{L^\infty}
			+
			\|\mathcal H\partial_xv_m^{(R)}
			-\mathcal H\partial_xv_0\|_{L^\infty}
			\Big)
			J_m\big[
			(\mathcal HJ_mv_m^{(R)})\,
			\partial_xJ_mv_m^{(R)}
			\big],
			Z_{n,m}^{(R)}
			\Bigg\rangle_{H^\theta}
			\Bigg|
			\notag\\
			&\qquad\le
			C_{N}
			\left[
			(n\wedge m)^{-2(s-1-\theta)}
			+
			\|Z_{n,m}^{(R)}\|_{H^\theta}^2
			\right].
			\label{cutoff-nonlinearity-difference-estimate}
		\end{align}
		
		Next, Lemma~\ref{Lemma:Qn} gives that for $0\le t\le\tau_N^{n,m}(R)$,
		\begin{align}
			\left|
			\IP{
				\mathcal Q_n^2v_n^{(R)}
				-
				\mathcal Q_m^2v_m^{(R)},
				Z_{n,m}^{(R)}
			}_{H^\theta}
			+
			\|\mathcal Q_nv_n^{(R)}
			-\mathcal Q_mv_m^{(R)}\|_{H^\theta}^2
			\right|  
			\le 
			C_N
			\left[
			(n\wedge m)^{-(s-\theta-\delta_{\mathcal Q})}
			+
			\|Z_{n,m}^{(R)}\|_{H^\theta}^2
			\right].
			\label{Qnm-combined-difference-estimate}
		\end{align}
		
		Set
		\begin{equation*}
			\lambda_{n,m}
			\triangleq
			(n\wedge m)^{-2(s-1-\theta)}
			+
			(n\wedge m)^{-(s-\theta-\delta_{\mathcal Q})}.
		\end{equation*}
		Then $\lambda_{n,m}\to0$ as $n,m\to\infty$. By the definition of
		$\mathfrak i_1^{(n,m,R)}$ and $\mathfrak i_2^{(n,m,R)}$, the nonlinear
		estimate \eqref{cutoff-nonlinearity-difference-estimate} and the combined
		cancellation estimate \eqref{Qnm-combined-difference-estimate} imply that
		\begin{align}
			2\IP{
				\mathfrak i_1^{(n,m,R)}(t),
				Z_{n,m}^{(R)}(t)
			}_{H^\theta}
			+
			\|\mathfrak i_2^{(n,m,R)}(t)\|_{H^\theta}^2
			\le
			C_{N}\lambda_{n,m}
			+
			C_{N}\|Z_{n,m}^{(R)}(t)\|_{H^\theta}^2,\quad 0\le t\le\tau_N^{n,m}(R).
			\label{drift-difference-pointwise-estimate}
		\end{align}
		Consequently,
		\begin{align}
			&\E\left[
			\sup_{0\le t\le T}
			\int_0^{t\wedge\tau_N^{n,m}(R)}
			\left(
			2\IP{
				\mathfrak i_1^{(n,m,R)}(t'),
				Z_{n,m}^{(R)}(t')
			}_{H^\theta}
			+
			\|\mathfrak i_2^{(n,m,R)}(t')\|_{H^\theta}^2
			\right)\d t'
			\Big|\F_0
			\right]
			\notag\\
			\le\ &
			C_{N}\lambda_{n,m}
			+
			C_{N}
			\int_0^T
			\E\left[
			\sup_{0\le t'\le t\wedge\tau_N^{n,m}(R)}
			\|Z_{n,m}^{(R)}(t')\|_{H^\theta}^2
			\Big|\F_0
			\right]\d t.
			\label{R-1-nmR estiamte}
		\end{align}
		Here the supremum causes no difficulty because the integrand in
		\eqref{drift-difference-pointwise-estimate} is bounded from above by a
		nonnegative function. We also used $T\le N$ to absorb the factor $T$
		into $C_{N,R}$.
		
		It remains to estimate the martingale term. Adding and subtracting
		$\mathcal Q_nv_m^{(R)}$, we have
		\[
		\mathfrak i_2^{(n,m,R)}
		=
		\mathcal Q_nZ_{n,m}^{(R)}
		+
		(\mathcal Q_n-\mathcal Q_m)v_m^{(R)}.
		\]
		The uniform cancellation estimate and the difference estimate in
		Lemma~\ref{Lemma:Qn} imply, for
		$0\le t\le\tau_N^{n,m}(R)$,
		\begin{align*}
			\left|
			\IP{
				\mathfrak i_2^{(n,m,R)},
				Z_{n,m}^{(R)}
			}_{H^\theta}
			\right|
			&\le
			C\|Z_{n,m}^{(R)}\|_{H^\theta}^2
			+
			C_N
			(n\wedge m)^{-(s-\theta-\delta_{\mathcal Q})}
			\|Z_{n,m}^{(R)}\|_{H^\theta}.
		\end{align*}
		Since $\tau_N^{n,m}(R)\le N$ and both stopped solutions are bounded in
		$H^s$ by $N$, the stopped stochastic integral below is square-integrable.
		Therefore, the conditional BDG inequality and Young's inequality give
		\begin{align}
			&2\E\left[
			\sup_{0\le t\le T}
			\left|
			\int_0^{t\wedge\tau_N^{n,m}(R)}
			\IP{
				\mathfrak i_2^{(n,m,R)}(t'),
				Z_{n,m}^{(R)}(t')
			}_{H^\theta}
			\d W(t')
			\right|
			\Big|\F_0
			\right]\notag\\
			\le \ &
			C_N\E\left[
			\left(
			\int_0^{T\wedge\tau_N^{n,m}(R)}
			\|Z_{n,m}^{(R)}(t)\|_{H^\theta}^2
			\left[
			\|Z_{n,m}^{(R)}(t)\|_{H^\theta}^2
			+
			(n\wedge m)^{-2(s-\theta-\delta_{\mathcal Q})}
			\right]\d t
			\right)^{1/2}
			\Big|\F_0
			\right]
			\notag\\
			\le \ &
			\frac14
			\E\left[
			\sup_{0\le t\le T}
			\|Z_{n,m}^{(R)}(t\wedge\tau_N^{n,m}(R))\|_{H^\theta}^2
			\Big|\F_0
			\right]+
			C_N
			\int_0^T
			\E\left[
			\sup_{0\le t'\le t\wedge\tau_N^{n,m}(R)}
			\|Z_{n,m}^{(R)}(t')\|_{H^\theta}^2
			\Big|\F_0
			\right]\d t
			+
			C_N\lambda_{n,m}.
			\label{R-2-nmR estiamte}
		\end{align}

		Taking the supremum over $t\in[0,T]$ in \eqref{Znm-Ito}, then using
		\eqref{R-1-nmR estiamte} and \eqref{R-2-nmR estiamte}, and absorbing
		the first term on the right-hand side of
		\eqref{R-2-nmR estiamte}, we find
		\begin{align*}
			\E\left[
			\sup_{0\le t\le T\wedge\tau_N^{n,m}(R)}
			\|Z_{n,m}^{(R)}(t)\|_{H^\theta}^2
			\Big|\F_0
			\right] \le
			C_{N,R}\lambda_{n,m}
			+
			C_{N,R}
			\int_0^T
			\E\left[
			\sup_{0\le t'\le t\wedge\tau_N^{n,m}(R)}
			\|Z_{n,m}^{(R)}(t')\|_{H^\theta}^2
			\Big|\F_0
			\right]\d t.
		\end{align*}
		Using Gronwall's inequality, taking $T=N$ and then using $\tau_N^{n,m}(R)\le N$,  we arrive at
		\[
		\E\left[
		\sup_{0\le t\le\tau_N^{n,m}(R)}
		\|v_n^{(R)}(t)-v_m^{(R)}(t)\|_{H^\theta}^2
		\Big|\F_0
		\right]
		\le
		C_{N}\lambda_{n,m}.
		\]
		This proves \eqref{Cauchy in E-M}.

		\textit{\underline{Proof of \ref{Convergence of X-n T}}.}
		Fix $T>0$ and let $N$ be an integer such that $N>T$. By the definition of
		$\tau_N^{n,m}(R)$,
		\begin{align*}
			\{\tau_N^{n,m}(R)<T\}
			\subset
			\left\{
			\sup_{t\in[0,T]}
			\|v_n^{(R)}(t)\|_{H^s}\ge N
			\right\} \cup
			\left\{
			\sup_{t\in[0,T]}
			\|v_m^{(R)}(t)\|_{H^s}\ge N
			\right\}.
		\end{align*}
		Consequently, conditional Chebyshev's inequality and
		\eqref{Xn uniform bound} give
		\begin{align}
			\p\bigl(\tau_N^{n,m}(R)<T\bigm|\F_0\bigr) \le
			\frac{4\|v_0\|_{H^s}^2}{N^2}
			\exp\left\{
			C\Big(
			1+2R
			+\|\partial_xv_0\|_{L^\infty}
			+\|\mathcal H\partial_xv_0\|_{L^\infty}
			\Big)T
			\right\},
			\label{exit-probability-tau-N-nm}
		\end{align}
		where $C$ is independent of $n$, $m$, and $N$.
		
		Let $\varepsilon>0$. The application of conditional
		Chebyshev's inequality yields
		\begin{align*}
			&\p\left(
			\sup_{t\in[0,T]}
			\|v_n^{(R)}(t)-v_m^{(R)}(t)\|_{H^\theta}
			>\varepsilon
			\Bigm|\F_0
			\right)\\
			\le \ &
			\p\bigl(\tau_N^{n,m}(R)<T\bigm|\F_0\bigr)
			+
			\frac{1}{\varepsilon^2}
			\E\left[
			\sup_{t\in[0,\tau_N^{n,m}(R)]}
			\|v_n^{(R)}(t)-v_m^{(R)}(t)\|_{H^\theta}^2
			\Bigm|\F_0
			\right].
		\end{align*}
		Taking the supremum over $m\ge n$, then using
		\eqref{Cauchy in E-M} and \eqref{exit-probability-tau-N-nm}, and finally letting $N\to\infty$, we obtain
		\begin{equation*}
			\lim_{n\to\infty}\sup_{m\ge n}
			\p\left(
			\sup_{t\in[0,T]}
			\|v_n^{(R)}(t)-v_m^{(R)}(t)\|_{H^\theta}
			>\varepsilon
			\Bigm|\F_0
			\right)
			=0
			\quad\pas
		\end{equation*}

		By the reverse Fatou lemma (since the probabilities are bounded by $1$), this implies
		\begin{equation}
			\lim_{n\to\infty}\sup_{m\ge n}
			\p\left(
			\sup_{t\in[0,T]}
			\|v_n^{(R)}(t)-v_m^{(R)}(t)\|_{H^\theta}
			>\varepsilon
			\right)
			=0.
			\label{unconditional-Cauchy-H-theta}
		\end{equation}
		
		We next choose one subsequence that works simultaneously on every compact
		time interval. By \eqref{unconditional-Cauchy-H-theta}, one may choose an
		increasing sequence of indices $\{n_k\}_{k\ge1}$ so rapidly that
		\begin{equation*}
			\p\left(
			\sup_{t\in[0,k]}
			\|v_{n_{k+1}}^{(R)}(t)-v_{n_k}^{(R)}(t)\|_{H^\theta}
			>2^{-k}
			\right)
			\le 2^{-k},
			\qquad k\ge1.
		\end{equation*}
		The Borel--Cantelli lemma then shows that, almost surely, for all
		sufficiently large $k$,
		\begin{equation*}
			\sup_{t\in[0,k]}
			\|v_{n_{k+1}}^{(R)}(t)-v_{n_k}^{(R)}(t)\|_{H^\theta}
			\le 2^{-k}.
		\end{equation*}
		It follows that, on a single event of probability one, the sequence
		$\{v_{n_k}^{(R)}\}_{k\ge1}$ is Cauchy in
		$C([0,T];H^\theta)$ for every $T>0$. After relabelling this subsequence as
		$\{v_n^{(R)}\}_{n\ge1}$, we have
		\begin{equation*}
			\lim_{n,m\to\infty}
			\sup_{t\in[0,T]}
			\|v_n^{(R)}(t)-v_m^{(R)}(t)\|_{H^\theta}
			=0,
			\qquad T>0
			\quad\pas
		\end{equation*}
		Since $H^\theta$ is complete, there exists an $H^\theta$-valued process
		$v^{(R)}$ with continuous paths such that
		\begin{equation*}
			v_n^{(R)}\longrightarrow v^{(R)}
			\quad\text{in }C([0,T];H^\theta),
			\qquad T>0
			\quad\pas
		\end{equation*}
		Equivalently,
		\begin{equation}
			\label{Xn to X uniform-t H-theta-limit}
			\lim_{n\to\infty}
			\sup_{t\in[0,T]}
			\|v_n^{(R)}(t)-v^{(R)}(t)\|_{H^\theta}
			=0,
			\qquad T>0
			\quad\pas
		\end{equation}
		Because each $v_n^{(R)}(t)$ is progressively
		measurable,  $v^{(R)}(t)$ is also progressively measurable as
		an $H^\theta$-valued process.  Moreover,
		by the conditional Fatou lemma and
		\eqref{Xn uniform bound}, $v^{(R)}$ is $H^s$-valued and inherits the uniform
		$H^s$ estimate:
		\begin{align*}
			\E\left[
			\liminf_{n\to\infty}
			\sup_{t\in[0,T]}
			\|v_n^{(R)}(t)\|_{H^s}^2
			\Bigm|\F_0
			\right] \le
			\liminf_{n\to\infty}
			\E\left[
			\sup_{t\in[0,T]}
			\|v_n^{(R)}(t)\|_{H^s}^2
			\Bigm|\F_0
			\right]
			<\infty
			\quad\pas
		\end{align*}
		Consequently,
		\begin{equation}
			\label{uniform_Hs_bound_liminf}
			\liminf_{n\to\infty}
			\sup_{t\in[0,T]}
			\|v_n^{(R)}(t)\|_{H^s}
			<\infty
			\quad\pas
		\end{equation}
		
		Fix an $\omega$ in the probability-one set on which
		\eqref{Xn to X uniform-t H-theta-limit} and
		\eqref{uniform_Hs_bound_liminf} hold. Choose a subsequence $\{n_k\}_{k\ge1}$, depending only
		on $T$ and $\omega$, along which the lower limit in
		\eqref{uniform_Hs_bound_liminf} is attained. Thus,
		\begin{align*}
			&
			\lim_{k\to\infty}
			\sup_{t\in[0,T]}
			\|v_{n_k}^{(R)}(\omega,t)\|_{H^s} =
			\liminf_{n\to\infty}
			\sup_{t\in[0,T]}
			\|v_n^{(R)}(\omega,t)\|_{H^s}
			<\infty.
		\end{align*}
		This single subsequence is uniformly bounded in $H^s$ for all
		$t\in[0,T]$. For any fixed $t\in[0,T]$, weak compactness in $H^s$ and
		\eqref{Xn to X uniform-t H-theta-limit} show that
		$v_{n_k}^{(R)}(\omega,t)$ converges weakly in $H^s$ to
		$v^{(R)}(\omega,t)$. Indeed, every weakly convergent subsubsequence has
		an $H^\theta$ limit, and the strong $H^\theta$ convergence identifies
		that limit uniquely with $v^{(R)}(\omega,t)$. Hence
		$v^{(R)}(\omega,t)\in H^s$, and weak lower semicontinuity gives
		\begin{align}
			\|v^{(R)}(\omega,t)\|_{H^s}
			\le
			\liminf_{k\to\infty}
			\|v_{n_k}^{(R)}(\omega,t)\|_{H^s} \le
			\liminf_{n\to\infty}
			\sup_{t\in[0,T]}
			\|v_n^{(R)}(\omega,t)\|_{H^s}.
			\label{pointwise Hs bound}
		\end{align}
		Because the subsequence above is independent of $t$, we may take the
		supremum over $t\in[0,T]$ in \eqref{pointwise Hs bound} and then use \eqref{uniform_Hs_bound_liminf} to obtain
		\begin{equation}
			\sup_{t\in[0,T]}
			\|v^{(R)}(t)\|_{H^s}^2
			\le
			\liminf_{n\to\infty}
			\sup_{t\in[0,T]}
			\|v_n^{(R)}(t)\|_{H^s}^2<\infty
			\quad\pas
			\label{limit-Hs-pathwise-bound}
		\end{equation}
		
		By \eqref{limit-Hs-pathwise-bound}, the dense embedding $H^s\hookrightarrow H^\theta$ and the fact $v^{(R)}\in C([0,T];H^\theta)$, we see that
		$v^{(R)}\in C_w([0,T];H^s)$. In particular,
		$t\mapsto\|v^{(R)}(t)\|_{H^s}$ is lower semicontinuous, so the supremum in
		\eqref{limit-Hs-pathwise-bound} is measurable. Applying the conditional
		Fatou lemma once more and then using \eqref{Xn uniform bound}, we obtain
		\eqref{X Hs bound}.
		
		Repeating the preceding argument for $T\in\mathbb N$ and intersecting the
		resulting probability-one events shows that $v^{(R)}(t)\in H^s$ for all
		$t\ge0$, almost surely.
		
		Progressive measurability of $v^{(R)}$ in the $H^\theta$ topology
		holds because it is  the almost sure limit of progressively measurable
		$H^\theta$-valued processes. Since $H^s$ is continuously embedded into
		$H^\theta$ and both spaces are separable Hilbert spaces, the Borel
		$\sigma$-algebras of $(H^s,\|\cdot\|_{H^s})$ and $(H^s,\|\cdot\|_{H^\theta})$ coincide (see, e.g., \cite[page~107]{Schwartz-1973-Book} or
		\cite[Theorem~6.8.6]{Bogachev-2007-Books}). Since $v^{(R)}(t)$ and
		$v_n^{(R)}(t)$ belong to $H^s$,  the process $v^{(R)}$  is also
		progressively measurable as an $H^s$-valued process.
		
		Finally, let $s'\in(\theta,s)$ and choose $\lambda_{\mathrm{int}}\in(0,1)$ such that
		\[
		s'=\lambda_{\mathrm{int}}\theta+(1-\lambda_{\mathrm{int}})s.
		\]
		Since $v^{(R)}\in C([0,T];H^\theta)$,
		interpolation
		gives
		\begin{align*}
			\|v^{(R)}(t)-v^{(R)}(t')\|_{H^{s'}}\le
			C\|v^{(R)}(t)-v^{(R)}(t')\|_{H^\theta}^{\lambda_{\mathrm{int}}}
			\|v^{(R)}(t)-v^{(R)}(t')\|_{H^s}^{1-\lambda_{\mathrm{int}}},
		\end{align*}
		which means that
		$v^{(R)}\in C([0,T];H^{s'})$.
		Applying the same interpolation inequality to the approximation error
		gives
		\begin{align*}
			\sup_{t\in[0,T]}
			\|v_n^{(R)}(t)-v^{(R)}(t)\|_{H^{s'}}
			\le
			C
			\left(
			\sup_{t\in[0,T]}
			\|v_n^{(R)}(t)-v^{(R)}(t)\|_{H^\theta}
			\right)^{\lambda_{\mathrm{int}}}
			\left(
			\sup_{t\in[0,T]}
			\|v_n^{(R)}(t)-v^{(R)}(t)\|_{H^s}
			\right)^{1-\lambda_{\mathrm{int}}}.
		\end{align*}
		Therefore, for every $L>0$ and $\varepsilon>0$,
		\begin{align}
			&\p\left(
			\sup_{t\in[0,T]}
			\|v_n^{(R)}(t)-v^{(R)}(t)\|_{H^{s'}}
			>\varepsilon
			\bigg|\F_0
			\right)\notag\\
			\le\ &
			\p\left(
			\sup_{t\in[0,T]}
			\|v_n^{(R)}(t)-v^{(R)}(t)\|_{H^s}
			>L
			\bigg|\F_0
			\right) +
			\p\left(
			\sup_{t\in[0,T]}
			\|v_n^{(R)}(t)-v^{(R)}(t)\|_{H^\theta}
			>
			\left(
			\frac{\varepsilon}{CL^{1-\lambda_{\mathrm{int}}}}
			\right)^{1/\lambda_{\mathrm{int}}}
			\bigg|\F_0
			\right).\label{eq:converge probabiltiy Hs' 1}
		\end{align}
		Moreover,
		\begin{align*}
			\E\left[
			\sup_{t\in[0,T]}
			\|v_n^{(R)}(t)-v^{(R)}(t)\|_{H^s}^2
			\Bigm|\F_0
			\right]
			\le\ &
			2\E\left[
			\sup_{t\in[0,T]}
			\|v_n^{(R)}(t)\|_{H^s}^2
			\Bigm|\F_0
			\right] +
			2\E\left[
			\sup_{t\in[0,T]}
			\|v^{(R)}(t)\|_{H^s}^2
			\Bigm|\F_0
			\right].
		\end{align*}
		Thus, by conditional Chebyshev's inequality,
		\eqref{Xn uniform bound}, and \eqref{X Hs bound},
		\begin{align}
			&\p\left(
			\sup_{t\in[0,T]}
			\|v_n^{(R)}(t)-v^{(R)}(t)\|_{H^s}
			>L
			\Bigm|\F_0
			\right)\notag\\
			\le\ &
			\frac{C\|v_0\|_{H^s}^2}{L^2}
			\exp\left\{
			C\Big(
			1+2R
			+\|\partial_xv_0\|_{L^\infty}
			+\|\mathcal H\partial_xv_0\|_{L^\infty}
			\Big)T
			\right\}.\label{eq:converge probabiltiy Hs' 2}
		\end{align}
		By \eqref{Xn to X uniform-t H-theta-limit} and \eqref{eq:converge probabiltiy Hs' 2},
		we can
		first send $n\to\infty$ and then send $L\to\infty$ in \eqref{eq:converge probabiltiy Hs' 1} to
		obtain
		\begin{equation*}
			\lim_{n\to\infty}
			\p\left(
			\sup_{t\in[0,T]}
			\|v_n^{(R)}(t)-v^{(R)}(t)\|_{H^{s'}}
			>\varepsilon
			\Bigm|\F_0
			\right)
			=0
			\quad\pas,
		\end{equation*}
		which gives convergence in probability in
		$C([0,T];H^{s'})$.
		
		Choose a countable dense subset of $(\theta,s)$. By a standard diagonal
		subsequence argument over all integer time horizons and all exponents in
		this dense subset, there exists a further subsequence, still denoted by
		$\{v_n^{(R)}\}_{n\ge1}$, such that, almost surely,
		\begin{equation*}
			v_n^{(R)}\longrightarrow v^{(R)}
			\quad\text{in }C([0,T];H^\zeta)
		\end{equation*}
		for every $T\in\mathbb N$ and every exponent $\zeta$ in the chosen dense
		subset. The limit is necessarily the same $v^{(R)}$, because convergence
		in $H^\zeta$ implies convergence in $H^\theta$, where the limit has already
		been identified.  Given any $s'\in(\theta,s)$, choose such a $\zeta$ with
		$s'<\zeta<s$. The continuous embedding $H^\zeta\hookrightarrow H^{s'}$ then
		establishes \eqref{Xn to X} for this $s'$; the endpoint $s'=\theta$ was
		proved in \eqref{Xn to X uniform-t H-theta-limit}. An arbitrary finite
		time horizon is contained in an integer one. This completes the proof of
		\ref{Convergence of X-n T} and hence the proof of the lemma.
	\end{proof}

	\subsection{Solving the cut-off problem}

	\begin{Lemma}
		Let the assumptions of Theorem~\ref{Thm-local-solution} hold and let $v^{(R)}$ be obtained in Lemma \ref{lem:Cauchy argument}. Define
		\begin{align*}
			\mathscr V_R(a,u)
			\triangleq-
			\chi_R\Big(
			\|\partial_xu-\partial_xa\|_{L^\infty}
			+
			\|\mathcal H\partial_xu-\mathcal H\partial_xa\|_{L^\infty}
			\Big)
			\big[
			(\mathcal H u)\,\partial_x u
			\big]
			+
			\frac12\mathcal Q^2u,\quad a,\ u\in H^s.
		\end{align*}
		Then
			\begin{equation}\label{eq : cut-off problem}
			v^{(R)}(t)-v_0
			=
			\int_0^t
			\mathscr V_R\bigl(v_0,v^{(R)}(t')\bigr)\d t'
			+
			\int_0^t \mathcal Qv^{(R)}(t')\d W(t')
		\end{equation}
		holds as an equation
		in $C([0,T];H^\theta)$ with $\theta\in
		\left(
		\frac32,\,
		s-\max\{2\delta_{\mathcal Q},1\}
		\right)$.
	Moreover, $H^s$-solution  to \eqref{eq : cut-off problem} is unique.
	\end{Lemma}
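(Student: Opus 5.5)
We pass to the limit $n\to\infty$ in the approximate equation \eqref{approximation scheme C}, term by term, along the subsequence of Lemma~\ref{lem:Cauchy argument}. Fix $T>0$ and $\theta$ as in the statement, and choose $s'\in(\theta,s)$ with $s'>\theta+\max\{2\delta_{\mathcal Q},1\}$. This is possible because $\theta<s-\max\{2\delta_{\mathcal Q},1\}$. By \eqref{Xn to X}, $v_n^{(R)}\to v^{(R)}$ in $C([0,T];H^{s'})$ almost surely. By \eqref{X Hs bound} and \eqref{Xn uniform bound}, the paths $\sup_{t\le T}\|v_n^{(R)}\|_{H^s}$ and $\sup_{t\le T}\|v^{(R)}\|_{H^s}$ are finite almost surely. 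For the first, this holds along a further subsequence, via conditional Fatou as in the proof of \eqref{uniform_Hs_bound_liminf}. The left-hand side $v_n^{(R)}(t)-v_0$ then converges in $C([0,T];H^\theta)$ trivially.

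For the drift, write the nonlinear difference as a telescoping sum. The cut-off arguments converge because $H^{s'}\hookrightarrow W^{1,\infty}$ and $\mathcal H$ is bounded on $H^{s'}$, and $\chi_R$ is Lipschitz, so the cut-off factors converge uniformly in $t$. Next, $\|J_n[(\mathcal HJ_nv_n)\partial_xJ_nv_n]-(\mathcal Hv)\partial_xv\|_{H^\theta}$ splits into three pieces:
\[
\|(\mathbf I-J_n)[(\mathcal Hv)\partial_xv]\|_{H^\theta}\lesssim n^{-(s'-1-\theta)}\|v\|_{H^{s'}}^2,
\]
a piece controlled by $\|J_nv_n-v\|_{H^{s'}}$ times $\|v_n\|_{H^{s'}}+\|v\|_{H^{s'}}$ using the algebra property of $H^{s'-1}$, and $\|(J_n-\mathbf I)v\|_{H^{s'}}\to0$. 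For the noise correction, split
\[
\mathcal Q_n^2v_n-\mathcal Q^2v=\mathcal Q_n^2(v_n-v)+(\mathcal Q_n^2-\mathcal Q^2)v.
\]
The first term is bounded by $\|v_n-v\|_{H^{s'}}$, using the uniform boundedness of $\mathcal Q_n^2$ from $H^{s'}$ to $H^{s'-2\delta_{\mathcal Q}}\hookrightarrow H^\theta$ given by Lemma~\ref{Lemma:Qn}(1). For the second, letting $m\to\infty$ in Lemma~\ref{Lemma:Qn}(4) gives the rate $n^{-(s'-\theta-2\delta_{\mathcal Q})}$ on $H^{s'}$. Hence the drift integrands converge in $H^\theta$ uniformly on $[0,T]$, and so do their time integrals.

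For the stochastic integral, the same splitting together with Lemma~\ref{Lemma:Qn}(3) gives $\sup_{t\le T}\|\mathcal Q_nv_n^{(R)}-\mathcal Qv^{(R)}\|_{H^\theta}\to0$ almost surely. Therefore $\int_0^T\|\mathcal Q_nv_n^{(R)}-\mathcal Qv^{(R)}\|_{H^\theta}^2\,\d t\to0$ in probability. The standard convergence-in-probability criterion for stochastic integrals, proved by localising and applying BDG, then yields $\sup_{t\le T}\|\int_0^t(\mathcal Q_nv_n^{(R)}-\mathcal Qv^{(R)})\,\d W\|_{H^\theta}\to0$ in probability, and almost surely along a further subsequence. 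Identifying the limits proves \eqref{eq : cut-off problem} in $C([0,T];H^\theta)$. Progressive measurability of the integrands follows from that of $v^{(R)}$.

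For uniqueness, let $v_1,v_2$ be two $H^s$-solutions of \eqref{eq : cut-off problem} with the same $v_0$. Stop at $\tau_N=N\wedge\inf\{t:\|v_1\|_{H^s}\vee\|v_2\|_{H^s}\ge N\}$ and apply Itô's formula to $\|v_1-v_2\|_{H^\theta}^2$; this is legitimate since all terms lie in $H^{\theta-\max\{2\delta_{\mathcal Q},1\}}$ and pair with $H^{\theta}$ objects of higher regularity. The noise terms are handled by the cancellation \eqref{eq:cancel-theorem-remark} at level $H^\theta$, i.e.\ Lemma~\ref{Lemma:Qn}(5) with $n\to\infty$ or directly applied to $\mathcal Q$, using linearity of $\mathcal Q$. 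The cut-off nonlinearity is handled as in \eqref{cutoff-nonlinearity-difference-estimate}, using the Lipschitz bound on $\chi_R$ and the $\theta$-level estimate of Lemma~\ref{Lemma : Huux} with $n=m=\infty$. Conditional BDG and Gronwall then give $\E[\sup_{t\le\tau_N}\|v_1-v_2\|_{H^\theta}^2\mid\mathcal F_0]=0$, and letting $N\to\infty$ gives $v_1=v_2$. The main obstacle is this uniqueness energy estimate. The nonlinear term loses a derivative, so one must use the commutator structure $\langle(\mathcal Hv_1)\partial_x(v_1-v_2),v_1-v_2\rangle_{H^\theta}$ via Kato--Ponce (Lemma~\ref{KP-commutator}). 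The noise requires the exact cancellation of $\langle\mathcal Q^2w,w\rangle_{H^\theta}+\|\mathcal Qw\|_{H^\theta}^2$, which needs $w=v_1-v_2\in H^{\theta+2\delta_{\mathcal Q}}$; this holds since $w\in H^s$. Both steps are available from the earlier lemmas.
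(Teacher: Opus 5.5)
Your proposal is correct and follows essentially the same route as the paper. For existence, you pass to the limit along the subsequence of Lemma~\ref{lem:Cauchy argument}, using convergence in an intermediate space $H^{s'}$ with $s'>\theta+\max\{2\delta_{\mathcal Q},1\}$ (the paper's $H^{s_1}$), the approximation properties of $\mathcal Q_n$ from Lemma~\ref{Lemma:Qn}, and a Lenglart/BDG-type criterion for the stochastic integrals. For uniqueness, you use the stopped $H^\theta$ energy estimate for $v_1-v_2$, combining the cancellation \eqref{Q cancellation}, the cut-off and Kato--Ponce bound from Lemma~\ref{Lemma : Huux}, and Gronwall.

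Your justification of It\^o's formula in $H^\theta$ is phrased loosely but is easier than you suggest. For $H^s$-valued solutions, all drift and diffusion coefficients already lie in $H^\theta$, because $s-\max\{2\delta_{\mathcal Q},1\}>\theta$.
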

	\begin{proof}
	Choose $s_1<s$ sufficiently close to $s$ so that
	$
	s_1>\max\left\{\theta+2\delta_{\mathcal Q},\,\theta+1,\,\frac52\right\}.
	$
	By \eqref{Xn to X},
	\[
	v_n^{(R)}
	\longrightarrow
	v^{(R)}
	\quad
	\text{in }C([0,T];H^{s_1})
	\quad
	\text{a.s.}
	\]
	Using the mapping properties of $\mathcal Q$ and the approximation properties of $\mathcal Q_n$ from Lemma~\ref{Lemma:Qn}, we have
	\[
	\mathcal Q_n v_n^{(R)}
	\longrightarrow
	\mathcal Q v^{(R)}
	\quad
	\text{in }C([0,T];H^\theta)
	\]
	and
	\[
	\mathcal Q_n^2 v_n^{(R)}
	\longrightarrow
	\mathcal Q^2 v^{(R)}
	\quad
	\text{in }C([0,T];H^\theta),
	\]
	in probability, and along a further subsequence almost surely.

Consequently, by the Lenglart inequality and
\eqref{Xn uniform bound},
\[
\int_0^\cdot
\mathcal Q_n v_n^{(R)}(t')\d W(t')
\longrightarrow
\int_0^\cdot
\mathcal Q v^{(R)}(t')\d W(t')
\]
in probability in $C([0,T];H^\theta)$. If necessary, one can first localise the stochastic integrand by a sequence of stopping times and then remove the localisation, similar to \eqref{tau-L-nR-definition} and \eqref{tau-L-nR-limit}.

	The deterministic integral involving $\mathcal Q_n^2v_n^{(R)}$ converges in $C([0,T];H^\theta)$ as well.
	For the nonlinear term, by the properties of $J_n$ in Lemma~\ref{Lemma-Jn} and the convergence of $v_n^{(R)}$ in $C([0,T];H^{s_1})$ we find that
	\[
	J_n\big[
	(\mathcal HJ_nv_n^{(R)})
	\partial_xJ_nv_n^{(R)}
	\big]
	\longrightarrow
	(\mathcal Hv^{(R)})\partial_xv^{(R)}
	\quad
	\text{in }C([0,T];H^\theta).
	\]
	Since $s_1>\frac52$, we have $H^{s_1-1}\hookrightarrow L^\infty$, and therefore the maps
	\[
	u\mapsto
	\|\partial_xu-\partial_xv_0\|_{L^\infty},
	\quad
	u\mapsto
	\|\mathcal H\partial_xu-\mathcal H\partial_xv_0\|_{L^\infty}
	\]
	are continuous on $H^{s_1}$. Hence the cut-off factors converge as well. Passing to the limit in \eqref{approximation scheme C}, we obtain that \eqref{eq : cut-off problem}.
	Since \(\theta>3/2\), the identity \eqref{eq : cut-off problem} also holds in \(C([0,T];C_b^1(\mathbb R))\).
	
	The uniqueness of solutions to \eqref{eq : cut-off problem} follows from the same argument used in the proof of \eqref{Cauchy in E-M}. Without the approximation,   \cite[Theorem 4.1]{Karlsen-Tang-Wang-2026-arXiv} gives
	\begin{equation}
		\left|
		\langle\mathcal Qf,f\rangle_{H^r}
		\right|
		\lesssim
		\|f\|_{H^r}^2,
		\quad
		\left|
		\langle\mathcal Q^2f,f\rangle_{H^r}
		+
		\|\mathcal Qf\|_{H^r}^2
		\right|
		\lesssim
		\|f\|_{H^r}^2,\quad r\ge0,\quad f\in H^{r+2\delta_{\mathcal Q}}.
		\label{Q cancellation}
	\end{equation}
	
	Consider two solutions $v_1$ and $v_2$ to \eqref{eq : cut-off problem} with the same initial data, and
	\[
	Z \triangleq v_1-v_2,
	\]
	then, for
	\[
	\tau_N^{1,2}
	\triangleq
	N
	\wedge
	\inf\left\{
	t\ge0:
	\|v_1(t)\|_{H^s}
	\vee
	\|v_2(t)\|_{H^s}
	\ge N
	\right\},
	\]
	the estimates \eqref{R-1-nmR estiamte}--\eqref{R-2-nmR estiamte}, now without the approximation error $\lambda_{n,m}$ (\eqref{Q cancellation} replaces the approximate cancellation estimate),  yield
	\begin{equation}\label{eq: uniqueness estimate}
		\E\left[
		\sup_{t\in[0,T\wedge\tau_N^{1,2}]}
		\|Z(t)\|_{H^\theta}^2
		\right]
		\le
		C_N
		\int_0^T
		\E\left[
		\sup_{r\in[0,t\wedge\tau_N^{1,2}]}
		\|Z(r)\|_{H^\theta}^2
		\right]\d t.
	\end{equation}
	By Gr\"onwall's inequality, $Z=0$ on $[0,T\wedge\tau_N^{1,2}]$ almost surely. Letting $N\to\infty$ gives pathwise uniqueness for the cut-off problem.
\end{proof}

	\subsection{Construction of the maximal solution}\label{Section: construction of the maximal solution}
	
	We first notice that the same stopped $H^\theta$ difference estimate as in
	\eqref{eq: uniqueness estimate}, now without the approximation error,
	yields pathwise uniqueness for $H^s$-valued solutions to
	\eqref{Target problem SCCF}.

	Let $R\in\mathbb N$, and let $v^{(R)}$ be the unique global solution to the cut-off problem \eqref{eq : cut-off problem}. We define
	\begin{equation*}
		\tau(R) \triangleq  \inf\big\{t\ge 0: \|\partial_xv^{(R)} -\partial_xv_0\|_{L^\infty}+\|\mathcal{H}\partial_xv^{(R)} -\mathcal{H}\partial_xv_0\|_{L^\infty}\ge R\big\}.
	\end{equation*}
	Since $v^{(R)}\in C([0,\infty);H^\theta)$ and
	$\theta>3/2$, the process
	\[
	t\longmapsto
	\|\partial_xv^{(R)}(t)-\partial_xv_0\|_{L^\infty}
	+
	\|\mathcal H\partial_xv^{(R)}(t)
	-\mathcal H\partial_xv_0\|_{L^\infty}
	\]
	is continuous and adapted. Hence $\tau(R)$ is a stopping time.
	Moreover, this process vanishes at $t=0$, and therefore
	$\mathbb P(\tau(R)>0)=1$.
	Since $$\chi_R(\|\partial_xv^{(R)} (t)-\partial_xv_0\|_{L^\infty}+\|\mathcal{H}\partial_xv^{(R)}(t) -\mathcal{H}\partial_xv_0\|_{L^\infty})=1,\quad t\le \tau(R),$$  we see that \eqref{Target problem SCCF} coincides with
	\eqref{eq : cut-off problem} up to time $\tau(R)$. This together with \eqref{X Hs bound} implies that
	$v^{(R)}$ satisfies \eqref{Target problem SCCF} up to $\tau(R)$.    By the standard consistency argument based on classical solutions to  \eqref{Target problem SCCF},
	$\tau(R)$ is nondecreasing in $R$, and
	\[
	v^{(R)}(t)=v^{(R+1)}(t),
	\qquad
	0\le t\le\tau(R),
	\quad R\in\mathbb{N} 
	\quad \mathbb P\text{-a.s.}
	\]
	We set
	\begin{equation*}
		\tau^*\triangleq \lim_{R\to\infty} \tau(R),\quad  \tau(0) \triangleq 0,
	\end{equation*}
	and define
	\begin{align*}
		v(t) \triangleq \sum_{R=1}^\infty \textbf{1}_{\big[\tau(R-1), \tau(R)\big)}(t) v^{(R)}(t),\quad t\in \big[0,\tau^*\big).
	\end{align*}
	For every $T<\tau^*$, one may choose $R$ such that $T<\tau(R)$.
	The consistency of the cut-off solutions makes this definition
	unambiguous and shows that $v$ is progressively measurable.
	Moreover, for every $T<\tau^*$ one may choose $R$ such that
	$T<\tau(R)$, and then
	\[
	v(t)=v^{(R)}(t),
	\qquad 0\le t\le T.
	\]
	Consequently, $v$ satisfies \eqref{Target problem SCCF} on
	$[0,\tau^*)$ and
	$
	v\in C([0,\tau^*);H^{s'})
	$
	for every $s'<s$.
	The maximal lifetime is independent of the Sobolev index $s>\frac{3}{2}+\max\{2\delta_{\mathcal Q},1\}$.  Indeed, let
	\[
	s_2>s_1>\frac32+\max\{2\delta_{\mathcal Q},1\},
	\qquad v_0\in H^{s_2}.
	\]
	For every $R\ge1$, the $H^{s_2}$ cut-off solution is also an
	$H^{s_1}$ solution to the same cut-off problem. Pathwise uniqueness in
	$H^{s_1}$ therefore gives
	\[
	v_{s_2}^{(R)}=v_{s_1}^{(R)}.
	\]
	Since the exit functional is the same for both constructions,
	\[
	\tau_{s_2}(R)=\tau_{s_1}(R),
	\qquad R\ge1.
	\]
	Hence, we see that
	\[
	\tau_{s_2}^*=\tau_{s_1}^*
	\quad\mathbb P\text{-a.s.}
	\]
	Moreover, since
	$\Lambda=-\mathcal H\partial_x$ and $\|\partial_x v_0\|_{L^\infty}+\|\mathcal{H} \partial_x v_0\|_{L^\infty}<\infty$ $\pas$,  this construction also implies that
	\begin{equation*} 
		\limsup_{t\uparrow\tau^*}\|v(t)\|_{H^s}
		=
		\infty
		\quad \Longleftrightarrow\quad
		\limsup_{t\uparrow\tau^*}
		\left(
		\|\partial_xv(t)\|_{L^\infty}
		+
		\|\Lambda v(t)\|_{L^\infty}
		\right)
		=\infty\quad
		\text{a.s. on}\ \left\{\tau^*<\infty\right\},
	\end{equation*}
	which is \eqref{eq:blow-up criterion statement}.

	Finally, it remains to prove that
	$v\in C([0,\tau^*);H^s)$ $\pas$
	Fix
	$
	\theta\in
	\left(
	\frac32,\,
	s-\max\{2\delta_{\mathcal Q},1\}
	\right).
	$
	The construction above gives, almost surely,
	$
	v\in C([0,\tau^*);H^\theta)
	$
	and
	$\sup_{0\le t\le T}\|v(t)\|_{H^s}<\infty$ for every $T<\tau^*$.
	It follows by the standard weak-continuity argument (cf. \cite[page 263, Lemma 1.4]{Temam-1977-book}) that
	\begin{equation}\label{eq:weak-Hs-continuity}
		v\in C_w([0,\tau^*);H^s)
		\quad \mathbb P\text{-a.s.}
	\end{equation}

	For $N\in\mathbb N$, define
	\begin{equation}\label{eq:Hs-continuity-stopping-time}
		\tau_N
		\triangleq
		N\wedge
		\inf\left\{
		t\in[0,\tau^*):
		\|v(t)\|_{H^s}\ge N
		\right\}.
	\end{equation}
	Since $v$ is progressively measurable as an $H^s$-valued process,
	the d\'ebut theorem shows that $\tau_N$ is a stopping time. Moreover,
	\begin{equation}\label{eq:Hs-stopping-times-exhaust}
		\tau_N\uparrow\tau^*
		\qquad\mathbb P\text{-a.s.}
	\end{equation}
	Indeed, on $\{\tau^*<\infty\}$ this follows from the already established
	estimate
	\[
	\limsup_{t\uparrow\tau^*}
	\left(
	\|\partial_xv(t)\|_{L^\infty}
	+
	\|\Lambda v(t)\|_{L^\infty}
	\right)
	=\infty
	\]
	and the Sobolev embedding
	\[
	\|\partial_xv\|_{L^\infty}
	+
	\|\Lambda v\|_{L^\infty}
	\lesssim
	\|v\|_{H^s}.
	\]
	On $\{\tau^*=\infty\}$, it follows from the local boundedness of
	$v$ in $H^s$.

	To prove strong $H^s$ continuity, we examine the mollified solution
	$J_n v$, where $J_n$ is defined in \eqref{Define Jn}. Throughout this
	argument we use the single family of stopping times $\{\tau_N\}_{N\ge1}$
	defined in \eqref{eq:Hs-continuity-stopping-time}. For every
	$r<\tau_N$, one has $\|v(r)\|_{H^s}<N$; the value at the single endpoint
	$r=\tau_N$ does not affect any of the stopped time integrals below.
	As in \eqref{tau-L-nR-definition}--\eqref{tau-L-nR-limit}, we first work
	with the stopped equation and then let $N\to\infty$ by
	\eqref{eq:Hs-stopping-times-exhaust}.
	
	Leveraging Assumption \ref{Hypo-Q} and applying Lemmas \ref{Lemma:Qn}, \ref{Lemma-Jn},
	and \ref{Lemma : Huux}, we can identify a nondecreasing function
	$P:[0,\infty)\to
	(0,\infty)$
	such that for all $N \ge 1$ and $t \ge 0$,
	\begin{align*}
		\sup_{n \ge 1,\, \| u\|_{H^s} \le N}
		\left| 2 \langle J_n [\H u \partial_x u ], J_n u \rangle_{H^s} \right| \le P(N),\quad
		\sup_{n \ge 1,\, \| u \|_{H^s} \le N}
		\langle J_n \Q u, J_n u \rangle_{H^s}^2 \le P(N),
	\end{align*}
	and
	\begin{align*}
		\sup_{n \ge 1,\, \|u \|_{H^s} \le N}
		\bigg| \langle J_n \Q^2 u, J_n u \rangle_{H^s}
		+ \| J_n \Q u \|_{H^s}^2 \bigg|
		\le P(N).
	\end{align*}
	Applying It\^o's formula to $\| J_n v(t) \|_{H^s}^2$ and
	using the above estimates, for any $n \ge 1$, we find that
	\begin{align}\label{Ito to Jn u-1}
		\|J_n v(t\wedge\tau_N)\|_{H^s}^2
		= \ &
		\|J_n v_0\|_{H^s}^2
		-
		\int_0^{t\wedge\tau_N}
		\left[
		2 \langle J_n [\H v \partial_x v ], J_n v \rangle_{H^s}
		\right](t')\d t'
		\notag\\
		&+
		\int_0^{t\wedge\tau_N}
		\Big[
		\bIP{J_n \Q^2 v(t'), J_n v(t')}_{H^s}+
		\|J_n \mathcal Q v(t')\|_{H^s}^2
		\Big]\d t'
		\notag\\
		&+
		2\int_0^{t\wedge\tau_N}
		\bIP{
			J_n \mathcal Q v(t'),J_n v(t')
		}_{H^s}
		\d W(t')
	\end{align}
	Let $T>0$.
	Then the BDG inequality and Lemma \ref{Lemma:Qn} applied to increments of
	\eqref{Ito to Jn u-1} yield, for a constant $C_{N,T}>0$ independent of $n$,
	\begin{equation}
		\mathbb{E} \left[ \Big|\|J_n v(t \land \tau_N) \|^2_{H^s}
		- \|J_n v(t' \land \tau_N)\|^2_{H^s} \Big|^4 \right] \le C_{N,T} | t - t' |^2,
		\quad t, t' \in[0,T], \ N \ge 1.\label{eq:Jn v time difference}
	\end{equation}

	Fix
	$
	\gamma\in\left(0,\frac14\right).
	$
	For $n,N\ge1$ and $T>0$, define
	\begin{equation*} 
		\mathfrak S_{n,N,T}
		\triangleq
		\int_0^T\int_0^T
		\frac{
			\Big|\|J_n v(t \land \tau_N) \|^2_{H^s}
			- \|J_n v(t' \land \tau_N)\|^2_{H^s} \Big|^4
		}{
			|t-t'|^{2+4\gamma}
		}
		\,\mathrm dt'\,\mathrm dt.
	\end{equation*}
	By Tonelli's theorem and the preceding increment estimate,
	\begin{align}\label{eq:Expectation S-nNT}
		\mathbb E\mathfrak S_{n,N,T}
		\le
		C_{N,T}
		\int_0^T\int_0^T
		|t-t'|^{-4\gamma}
		\,\mathrm dt'\,\mathrm dt
		=
		C_{N,T}
		\frac{
			2T^{2-4\gamma}
		}{
			(1-4\gamma)(2-4\gamma)
		}.
	\end{align}
	Now we apply Lemma~\ref{lem:fractional-Sobolev-Morrey-time} with \[ p=4, \qquad r=\frac14+\gamma,\qquad f(\cdot)=\|J_n v(\cdot\wedge \tau_N)\|^2_{H^s}\] to obtain
	\[
	\Big|\|J_n v(t \land \tau_N) \|^2_{H^s}
	- \|J_n v(t' \land \tau_N)\|^2_{H^s} \Big|^4
	\le
	C_\gamma
	|t-t'|^{4\gamma}
	\mathfrak S_{n,N,T},
	\qquad
	t,t'\in[0,T].
	\]
	Here we note that $\|J_n v(t \land \tau_N) \|^2_{H^s}$ already has continuous paths, so its continuous representative
	is $\|J_n v(t \land \tau_N) \|^2_{H^s}$ itself.
	Then we arrive at
	\begin{equation}\label{eq:uniform-random-holder}
		\left|
		\|J_n v(t\wedge\tau_N)\|_{H^s}^2
		-
		\|J_n v(t'\wedge\tau_N)\|_{H^s}^2
		\right|
		\le
		\mathscr C_{n,N,T}|t-t'|^\gamma,
		\qquad
		\mathscr C_{n,N,T}
		\triangleq
		C_\gamma^{1/4}
		\mathfrak S_{n,N,T}^{1/4},\qquad
		t,t'\in[0,T],
	\end{equation}
	and \eqref{eq:Expectation S-nNT} implies
	\begin{equation}\label{eq:uniform-holder-moment}
		\sup_{n\ge1}
		\mathbb E
		\left[
		\mathscr C_{n,N,T}^4
		\right]
		<\infty.
	\end{equation}
	By Fatou's lemma and \eqref{eq:uniform-holder-moment},
	\begin{equation*}
		\mathbb E\left[\liminf_{n\to\infty}
		\mathscr C_{n,N,T}^{4}\right]
		\le
		\liminf_{n\to\infty}
		\mathbb E\left[\mathscr C_{n,N,T}^{4}\right]
		<\infty.
	\end{equation*}
		Consequently, there exists an event
	$\Omega_{N,T}$ of probability one on which
	\[
	L_{N,T}(\omega)
	\triangleq
	\liminf_{n\to\infty}
	\mathscr C_{n,N,T}(\omega)
	<\infty,
	\]
	and on which the pathwise estimate
	\eqref{eq:uniform-random-holder} holds simultaneously for every
	$n\ge1$ and every $t,t'\in[0,T]$.
	
	For every $\omega\in\Omega_{N,T}$, choose an increasing
	subsequence $\{n_k\}_{k\ge1}=\{n_k(\omega,N,T)\}_{k\ge1},$
	such that
	\begin{equation}\label{eq:pathwise-uniform-holder-subsequence}
		\mathscr C_{n_k,N,T}(\omega)
		\le
		1+L_{N,T}(\omega),
		\quad k\ge1,\quad \text{and}\quad 	\sup_{k\ge1}
		\mathscr C_{n_k,N,T}(\omega)
		\le
		1+L_{N,T}(\omega)
		<\infty.
	\end{equation}

	Let $\Omega_{\mathrm{reg}}$ be the probability-one event, obtained in
	the construction above, on which
	\begin{equation*} 
		v(t,\omega)\in H^s
		\qquad
		\text{for every }0\le t<\tau^*(\omega).
	\end{equation*}
	Fix $	\omega	\in	\Omega_{\mathrm{reg}}	\cap
	\Omega_{N,T}.$
	By the construction of $\tau_N$ (see \eqref{eq:Hs-continuity-stopping-time}), one has
	$
	\tau_N(\omega)<\tau^*(\omega).
	$
	Hence, for every $t\in[0,T]$,
	\[
	v\bigl(t\wedge\tau_N(\omega),\omega\bigr)\in H^s.
	\]
	Therefore, the deterministic approximation property in
	Lemma~\ref{Lemma-Jn} gives
	\[
	\left\|
	J_n v\bigl(t\wedge\tau_N(\omega),\omega\bigr)
	-
	v\bigl(t\wedge\tau_N(\omega),\omega\bigr)
	\right\|_{H^s}
	\longrightarrow0
	\]
	for every $t\in[0,T]$. Consequently,
	\begin{equation}\label{eq:Jn v to v}
		\left\|
		J_n v\bigl(t\wedge\tau_N(\omega),\omega\bigr)
		\right\|_{H^s}^2
		\longrightarrow
		\left\|
		v\bigl(t\wedge\tau_N(\omega),\omega\bigr)
		\right\|_{H^s}^2,
		\qquad
		t\in[0,T].
	\end{equation}
	Notice that \eqref{eq:Jn v to v} is now a deterministic statement
	on the fixed path $\omega$ and holds for every $t\in[0,T]$; no
	intersection over an uncountable collection of probability-one events
	is involved.
	
	We may therefore pass to the limit along
	$\{n_k\}_{k\ge1}$ in \eqref{eq:uniform-random-holder}. Using
	\eqref{eq:pathwise-uniform-holder-subsequence}, we obtain, for every
	$t,t'\in[0,T]$,
	\begin{align*}
		&
		\left|
		\left\|
		v\bigl(t\wedge\tau_N(\omega),\omega\bigr)
		\right\|_{H^s}^2
		-
		\left\|
		v\bigl(t'\wedge\tau_N(\omega),\omega\bigr)
		\right\|_{H^s}^2
		\right| \le
		\bigl(1+L_{N,T}(\omega)\bigr)
		|t-t'|^\gamma.
	\end{align*}
	Thus
	\[
	t\longmapsto
	\left\|v\bigl(t\wedge\tau_N(\omega),\omega\bigr)\right\|_{H^s}^2
	\]
	is H\"older continuous on $[0,T]$. Since
	$\Omega_{\mathrm{reg}}\cap\Omega_{N,T}$ has probability
	one, it follows that
	\begin{equation*} 
		t\longmapsto
		\|v(t\wedge\tau_N)\|_{H^s}
		\quad\text{is continuous on }[0,T]
		\quad\pas
	\end{equation*}
	
	Taking the intersection of the corresponding probability-one events
	over $N,T\in\mathbb N$, and using
	\eqref{eq:Hs-stopping-times-exhaust}, we conclude that
	\[
	t\longmapsto\|v(t)\|_{H^s}
	\]
	is continuous on $[0,\tau^*)$ almost surely.
	
	Finally, we combine this norm continuity with the weak continuity
	\eqref{eq:weak-Hs-continuity} to obtain
	$
	v\in C([0,\tau^*);H^s)
	$ $\pas$
	This completes the proof of Theorem~\ref{Thm-local-solution}.
	
	\begin{Remark}[On the role of
		Lemma~\ref{lem:fractional-Sobolev-Morrey-time}]
		One can use Fatou's lemma, \eqref{eq:Jn v to v} and \eqref{eq:Jn v time difference} to obtain
		\begin{equation*} 
			\mathbb{E} \left[ \Big|\| v(t \land \tau_N) \|^2_{H^s}
			- \| v(t' \land \tau_N)\|^2_{H^s} \Big|^4 \right] \le C_{N,T} | t - t' |^2,
			\quad t, t' \in[0,T], \ N \ge 1. 
		\end{equation*}
		Consequently, Kolmogorov's continuity criterion shows that
		$\| v(\cdot \land \tau_N)\|^2_{H^s}$ admits a (H\"older-)continuous 
		modification. This conclusion alone, however, does not imply that the original
		process $\|v(\cdot \land \tau_N)\|^2_{H^s}$ and its continuous modification are
		indistinguishable.  
		Although the weak $H^s$ continuity of
		$v(\cdot\wedge\tau_N)$ implies that $t\mapsto \|v(t\wedge\tau_N)\|_{H^s}$ is lower
		semicontinuous in $H^s$, this is not sufficient to identify the two
		processes.   The argument above therefore applies
		Lemma~\ref{lem:fractional-Sobolev-Morrey-time} pathwise to the
		approximating processes $\|J_n v(\cdot \land \tau_N)\|^2_{H^s}$, and combines the resulting
		H\"older estimates with the uniform moment bound for the random
		H\"older constants and a pathwise subsequence argument. Passing to
		the limit in these pathwise estimates shows directly that
		$
		t\longmapsto  
		\|v(t\wedge\tau_N)\|_{H^s}^2
		$
		itself has H\"older-continuous paths.  This avoids the preceding
		modification-identification issue.
	\end{Remark}

	\section{Nonlocal singularity formation under transport noise}\label{Section:BlowUp}
	
	In this section we prove Theorem~\ref{Thm-blowup} and  
	the conditional result on the blow-up rate, Theorem~\ref{Thm-blowup-rate}.
	
	We consider the
	transport-noise equation
	\begin{equation}\label{eq:sccf+transport noise}
		\d v+(\mathcal Hv)\,\partial_xv \d t
		=
		\sigma(x)\partial_xv\circ{\rm d}W(t),
		\quad
		v(0)=v_0,
		\quad t>0,
		\quad x\in\R.
	\end{equation}
	Throughout this section, Assumption~\ref{Hypo-sigma} is in force, and
	$(v,\tau^*)$ denotes the maximal $H^s$ solution with $s>7/2$.
	
	\begin{Remark}\label{Remark-localisation-regularity}
		We first record the localisation used in the stochastic pointwise
		computations. For $R>1+\|v_0\|_{H^s}$, let
		\begin{equation}\label{eq:tau-R-blowup}
			\tau_R
			\triangleq
			R\wedge
			\inf\bigl\{t\in[0,\tau^*):\|v(t)\|_{H^s}\ge R\bigr\}.
		\end{equation}
		Then $\tau_R\uparrow\tau^*$ almost surely as $R\to\infty$. On every
		interval $[0,T\wedge\tau_R]$, $T<\infty$, all spatial norms needed below
		are bounded. Indeed,
		\[
		v,\ \mathcal Hv\in H^s\hookrightarrow C_b^3,
		\qquad
		\Lambda v\in H^{s-1}\hookrightarrow C_b^2,
		\]
		and
		\[
		\Lambda\bigl((\mathcal Hv)v_x\bigr),\ \Lambda(\sigma v_x)
		\in H^{s-2}\hookrightarrow C_b^1,
		\qquad
		\Lambda\bigl[\sigma\partial_x(\sigma v_x)\bigr]
		\in H^{s-3}\hookrightarrow C_b.
		\]
		These are the spatial regularities required by the It\^o--Wentzell formula. All
		applications of the It\^o--Wentzell formula, the stochastic Fubini theorem,
		and pointwise It\^o calculus below are first made on
		$[0,T\wedge\tau_R]$. The bounds entering the blow-up comparison are
		independent of $R$, so we then let $R\to\infty$. We shall not repeat this
		localisation at each step.
	\end{Remark}
	\begin{Remark}\label{Remark-progressive measurability}
		Throughout this section, set
		\[
		\mathcal I_{\tau^*}
		\triangleq
		\{(\omega,t):0\le t<\tau^*(\omega)\}.
		\]
		The stochastic interval $\mathcal I_{\tau^*}$ is viewed as a progressive
		subset of $\Omega\times[0,\infty)$. Processes
		defined only on this interval are always taken in progressively measurable
		versions; whenever a globally defined process is needed, we use the extension
		by zero after $\tau^*$. As before, first entrance times below are understood with the
		convention $\inf\varnothing=\infty$.
	\end{Remark}

	\subsection{Stochastic characteristics and persistence of the maximum}
	
	For each $x\in\R$, we restate the Stratonovich characteristic flow given by \eqref{eq:char-flow}:
	\begin{equation*} 
		\d\phi(t,x)
		=
		\mathcal Hv(t,\phi(t,x)) \d t
		-
		\sigma(\phi(t,x))\circ{\rm d}W(t),
		\quad
		\phi(0,x)=x.
	\end{equation*}
	The localisation in Remark~\ref{Remark-localisation-regularity}
	allows us to choose a version of the characteristic flow for which all
	the properties below hold simultaneously up to the maximal existence
	time.
	
	\begin{Lemma}[Stochastic flow and related invariance]\label{lem:flow}
		On a common event of probability one,  the following properties hold:
		\begin{itemize}[leftmargin=0.79cm]
			\setlength\itemsep{0.2em}
			
			\item for every $t<\tau^*$, the map
			\[
			\phi(t,\cdot):\R\to\R,
			\]
			is a $C^1$-diffeomorphism. Moreover, $(t,x)\mapsto
			\phi(t,x)$ is continuous, and for every fixed $x$, the path
			$t\mapsto\phi(t,x)$ is a continuous semimartingale.
			
			\item along characteristics,
			\begin{equation}\label{eq:invariance}
				v(t,\phi(t,x))=v_0(x),
				\quad
				0\le t<\tau^*,
				\quad x\in\R.
			\end{equation}
			Equivalently, $v(t,\cdot)=v_0\circ\phi^{-1}(t,\cdot)$,
			$0\le t<\tau^*$.
		\end{itemize}

	\end{Lemma}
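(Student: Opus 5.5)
We plan to construct the flow by localisation on the stopping times $\tau_R$ of \eqref{eq:tau-R-blowup}, then glue, and to obtain the invariance \eqref{eq:invariance} from the It\^o--Wentzell formula applied to the composition $v(t,\phi(t,x))$.

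\textbf{Step 1: the flow on $[0,\tau_R]$.} Fix $R$ and consider the stopped velocity field $b_R(t,y)\triangleq \mathcal Hv(t\wedge\tau_R,y)$. By Remark~\ref{Remark-localisation-regularity}, $b_R(t,\cdot)\in C_b^3$ with norm bounded by $C R$ uniformly in $t$. The field $b_R$ is also progressively measurable and continuous in $t$. The diffusion coefficient $\sigma\in\mathscr S$ is deterministic and smooth. Converting to It\^o form, the SDE reads
\[
\d\phi=\Bigl[b_R(t,\phi)+\tfrac12\sigma\sigma'(\phi)\Bigr]\d t-\sigma(\phi)\,\d W(t).
\]
Its coefficients are random but adapted, globally Lipschitz and of class $C^2$ in space, with bounded derivatives. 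Kunita's theory of stochastic flows (random-coefficient version) then gives a version $\phi_R(t,x)$ with three properties: it is jointly continuous, it is $C^1$ (indeed $C^{2,\beta'}$) in $x$, and $\phi_R(t,\cdot)$ is a diffeomorphism of $\R$ for all $t$, simultaneously, on one full-measure event.

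In one dimension the Jacobian can be written explicitly, which makes the diffeomorphism property transparent:
\[
\partial_x\phi_R(t,x)=\exp\Bigl(\int_0^t \partial_y b_R(r,\phi_R)\,\d r-\int_0^t\sigma'(\phi_R)\circ\d W(r)\Bigr)>0.
\]
Properness follows from the boundedness of the coefficients, since $|\phi_R(t,x)-x|$ is controlled by an a.s.\ finite random variable uniformly in $x$. Together with $\partial_x\phi_R>0$, this yields surjectivity.

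\textbf{Step 2: gluing.} By pathwise uniqueness for the SDE, $\phi_{R}=\phi_{R'}$ on $[0,\tau_R]$ for $R<R'$, outside a null set. Intersecting over $R\in\mathbb N$ gives one event on which $\phi(t,x)\triangleq\phi_R(t,x)$ for $t\le\tau_R$ is well defined. Since $\tau_R\uparrow\tau^*$, this defines $\phi$ on $[0,\tau^*)$ and all the regularity properties transfer.

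\textbf{Step 3: invariance.} Apply the Stratonovich It\^o--Wentzell formula to $v(t,\cdot)$, which solves \eqref{eq:sccf+transport noise} in $C_b^1$, composed with $\phi(t,x)$ on $[0,T\wedge\tau_R]$. Because the field has $C_b^3$ regularity there, the formula is legitimate. We obtain
\[
\d\,v(t,\phi)=\Bigl[-(\mathcal Hv)v_x+(\mathcal Hv)v_x\Bigr](t,\phi)\d t+\Bigl[\sigma v_x-\sigma v_x\Bigr](t,\phi)\circ\d W=0.
\]
So, for each $x$, $v(t,\phi(t,x))=v_0(x)$ a.s. Continuity in $(t,x)$ upgrades this from countably many $x$ to all $x$, and from a fixed $t$ to all $t<\tau^*$, on one common event. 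The identity $v(t,\cdot)=v_0\circ\phi^{-1}(t,\cdot)$ follows from the diffeomorphism property.

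\textbf{Main obstacle.} The delicate point is the rigorous It\^o--Wentzell step in Stratonovich form. The field $v$ satisfies its equation only in $C_b^1$, so we must convert both equations to It\^o form and check that the correction terms cancel. These corrections are $\frac12\sigma\partial_x(\sigma v_x)$ from the field, $\frac12\sigma\sigma' v_x+\frac12\sigma^2 v_{xx}$ from the characteristic, and the cross-variation $-\sigma\,\partial_x(\sigma v_x)$. This cancellation requires $v_{xx}$ continuous and bounded, which the $H^s$, $s>7/2$ localisation provides. We would follow Kunita's generalized It\^o formula, verifying the cancellation explicitly.
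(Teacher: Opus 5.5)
Your proposal is correct and follows essentially the same route as the paper. The shared steps are: localise the drift via $\tau_R$, apply Kunita's stochastic-flow theorem to the stopped equation, glue the localised flows by pathwise uniqueness, use the explicit positive Jacobian, and obtain invariance from the It\^o--Wentzell formula. The only differences are that the paper works directly in Stratonovich form, whereas you additionally verify the cancellation of the It\^o corrections, and that in the gluing step pathwise uniqueness gives $\phi_R=\phi_{R'}$ only for each fixed $x$, so you should pass through rational $x$ and joint continuity there as well, as the paper does and as you do in Step~3.
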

	
	\begin{proof}
		Let $(R_m)_{m\ge1}$ be a deterministic increasing sequence such that
		\[
		R_m>1+\|v_0\|_{H^s},
		\qquad
		R_m\uparrow\infty.
		\]
		For every $m\ge1$, define the stopped drift
		\[
		b_m(t,x)
		\triangleq
		\mathcal H
		v(t\wedge\tau_{R_m},x).
		\]
		By the definition of $\tau_{R_m}$ in \eqref{eq:tau-R-blowup} and the Sobolev embedding
		$H^s(\R)\hookrightarrow C_b^3(\R)$ for $s>7/2$, we have
		\[
		\sup_{t\ge0}
		\|b_m(t,\cdot)\|_{C_b^3}
		\lesssim
		\sup_{0\le r\le\tau_{R_m}}
		\|v(r)\|_{H^s}
		\le R_m.
		\]
		In particular, $b_m$ is globally Lipschitz in the spatial variable,
		with bounded spatial derivatives. The same properties hold for
		$\sigma$, since $\sigma\in\mathscr S(\R;\R)$.
		
		Consider the globally defined localised equation
		\begin{equation*}
			\mathrm{d}\phi_m(t,x)
			=
			b_m(t,\phi_m(t,x)) \d t
			-
			\sigma(\phi_m(t,x))\circ{\rm d}W(t),
			\qquad
			\phi_m(0,x)=x.
		\end{equation*}
		For every pair $(m,N)\in\mathbb N^2$, the standard $C^1$
		stochastic-flow theorem (see, for example,
		\cite[Theorem 4.6.5]{Kunita-1990-Book}), applied on the deterministic interval
		$[0,N]$, yields an event $\Omega_{m,N}$ of probability one on which
		$
		x\longmapsto\phi_m(t,x)
		$
		is a $C^1$-diffeomorphism for every $t\in[0,N]$, and
		$(t,x)\mapsto\phi_m(t,x)$ is jointly continuous.

		If $m,n\ge1$, then
		$\phi_m(\cdot,x)$ and $\phi_n(\cdot,x)$ solve the same equation on
		$
		0\le t\le\tau_{R_m}\wedge\tau_{R_n}.
		$
		Hence pathwise uniqueness gives, for every fixed $x\in\R$,
		\[
		\phi_m(t,x)=\phi_n(t,x),
		\qquad
		0\le t\le\tau_{R_m}\wedge\tau_{R_n},
		\]
		almost surely. We first take the intersection of the corresponding
		full-probability events over $
		m,n\in\mathbb N$ and $
		x\in\mathbb Q$,
		and then use the continuity in $x$ to extend this identity
		simultaneously to every $x\in\R$.
		
		We may therefore take
		\[
		\Omega_{*}
		\subset
		\bigcap_{m,N\ge1}\Omega_{m,N}
		\]
		to be a common event of probability one on which the above
		consistency holds and
		$
		\tau_{R_m}\uparrow\tau^*.
		$
		For $\omega\in\Omega_{*}$ and
		$0\le t<\tau^*(\omega)$, $\phi(\omega,t,x)$ is constructed by first  choosing $m$
		such that $t<\tau_{R_m}$, and then letting
		\[
		\phi(\omega,t,x)
		\triangleq
		\phi_m(\omega,t,x).
		\]
		The consistency just proved shows that this definition does not
		depend on the choice of $m$. Since $\phi_m(t,\cdot)$ is a
		$C^1$-diffeomorphism, the same is true of $\phi(t,\cdot)$.
		The joint continuity and the continuous local semimartingale
		property follow in the same way from the corresponding localised
		properties.

		Let
		\[
		\mathfrak J_\phi(t,x)
		\triangleq
		\partial_x\phi(t,x)
		\]
		be the spatial Jacobian. Fix $m,N\ge1$ and $x\in\mathbb R$.
		Differentiating \eqref{eq:char-flow} with respect to the initial
		point $x$ on the localised interval
		$[0,N\wedge\tau_{R_m}]$ gives
		\[
		\d\mathfrak J_\phi(t,x)
		=
		(\mathcal Hv)_x(t,\phi(t,x))
		\mathfrak J_\phi(t,x)\d t
		-
		\sigma'(\phi(t,x))
		\mathfrak J_\phi(t,x)\circ{\rm d}W(t).
		\]
		This is a scalar linear Stratonovich equation with
		$\mathfrak J_\phi(0,x)=1$. Its explicit solution is
		\begin{equation*}
			\mathfrak J_\phi(t,x)
			=
			\exp\!\Bigg(
			\int_0^t
			(\mathcal Hv)_x(t',\phi(t',x))\d t'
			-
			\int_0^t
			\sigma'(\phi(t',x))\circ{\rm d}W(t')
			\Bigg)
			>0,
			\qquad 0\le t\le N\wedge\tau_{R_m}.
		\end{equation*}
		Since $m$ and $N$ are arbitrary and
		$\tau_{R_m}\uparrow\tau^*$, it follows that
		$\mathfrak J_\phi(t,x)>0$ for every $t<\tau^*$ and every
		$x\in\mathbb R$. Thus the flow is orientation-preserving on its
		entire interval of existence. Together with the construction and
		consistency of the localised flows, this shows that
		$\phi(t,\cdot):\mathbb R\to\mathbb R$ is a
		$C^1$-diffeomorphism for every $t<\tau^*$.
		
		Finally, after the localisation
		described above, the It\^o--Wentzell formula gives
		\begin{align*}
			\d v(t,\phi(t,x))
			=
			[\d v(t,\cdot)]_{x=\phi(t,x)}
			+
			v_x(t,\phi(t,x))\circ\d\phi(t,x).
		\end{align*}
		By \eqref{eq:sccf+transport noise} and \eqref{eq:char-flow}, we have
		\begin{align*}
			\d v(t,\phi(t,x))=&-\mathcal Hv(t,\phi(t,x)) v_x(t,\phi(t,x)) \d t
			+
			\sigma(\phi(t,x))v_x(t,\phi(t,x))\circ{\rm d}W(t)\\
			&
			+v_x(t,\phi(t,x))
			\left[
			\mathcal Hv(t,\phi(t,x)) \d t
			-
			\sigma(\phi(t,x))\circ{\rm d}W(t)
			\right]
			=0.
		\end{align*}
		Hence $v(t,\phi(t,x))=v_0(x)$ for each fixed $x$, simultaneously for all $t<\tau^*$.
		Taking a common full-probability event first for rational
		$x$ and rational $t$, and then using the joint continuity of $v$ and
		$\phi$, yields \eqref{eq:invariance} simultaneously for all
		$x\in\R$ and $t<\tau^*$.
	\end{proof}

	\begin{Remark}
		Since $\phi(t,\cdot)$ is a bijection, \eqref{eq:invariance}
		implies
		\begin{equation}\label{eq:Linfty-invariance}
			\|v(t)\|_{L^\infty}=\|v_0\|_{L^\infty}\triangleq V_{\infty},
			\quad
			0\le t<\tau^*.
		\end{equation}
	\end{Remark}
	
	Let $x_0$ be the global maximum point appearing in  Theorem~\ref{Thm-blowup}, and 
recall \eqref{eq:z=phi(x0)}:
	\begin{equation*} 
	z_0(t)\triangleq\phi(t,x_0),
	\quad
	0\le t<\tau^*.
\end{equation*}
	By Lemma~\ref{lem:flow},
	\[
	v(t,z_0(t))
	=v_0(x_0)
	=\sup_{x\in\R}v_0(x)
	=\sup_{x\in\R}v(t,x).
	\]
	Thus $z_0(t)$ is a global maximum point of $v(t,\cdot)$ and
	\begin{equation*}
		v_x(t,z_0(t))=0,
		\quad
		0\le t<\tau^*.
	\end{equation*}
	
	We recall  the quantity that drives the blow-up mechanism introduced in \eqref{eq:M(t) define}:
	\begin{equation*} 
		M(t)\triangleq\Lambda v(t,z_0(t)),
		\quad
		0\le t<\tau^*.
	\end{equation*}
	Set
	\begin{equation*}
		M_0\triangleq M(0)=\Lambda v_0(x_0).
	\end{equation*}
	
	\begin{Lemma}[Strict positivity of $M$]\label{lem:M-positive}
		Suppose that $M_0>0$. Then
		\begin{equation*}
			M(t)>0,
			\quad
			0\le t<\tau^*,
			\quad\text{almost surely}.
		\end{equation*}
	\end{Lemma}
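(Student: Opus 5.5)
The plan is to use the singular-integral representation \eqref{eq: Hilbert Lambda kernel} of $\Lambda$ at the global maximum point $z_0(t)$, together with the invariance from Lemma~\ref{lem:flow}. Fix $\omega$ in the common full-probability event of Lemma~\ref{lem:flow}, and fix $t<\tau^*(\omega)$. By \eqref{eq:invariance}, $v(t,z_0(t))=v_0(x_0)=\sup_x v(t,x)$. Hence the integrand
\[
\frac{v(t,z_0(t))-v(t,y)}{(z_0(t)-y)^2}
\]
is nonnegative for every $y\neq z_0(t)$. Since $v(t)\in H^s\hookrightarrow C_b^3$ and $v_x(t,z_0(t))=0$, the integrand is bounded near $y=z_0(t)$ by $\tfrac12\|v_{xx}(t)\|_{L^\infty}$. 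Away from that point it is bounded by $2V_\infty/(z_0-y)^2$. The principal value is therefore an absolutely convergent Lebesgue integral of a nonnegative continuous function, so $M(t)\ge0$.

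For strict positivity, I argue by contradiction. Suppose $M(t)=0$. Then the continuous nonnegative integrand vanishes identically, so $v(t,\cdot)\equiv v_0(x_0)$ is constant. Since $v(t)\in H^s\subset L^2(\mathbb R)$, this constant must be $0$, so $v(t,\cdot)\equiv0$. Because $\phi(t,\cdot)$ is a bijection by Lemma~\ref{lem:flow}, \eqref{eq:invariance} gives $v_0=v(t,\phi(t,\cdot))\equiv0$. But then $\Lambda v_0\equiv0$, which contradicts $M_0=\Lambda v_0(x_0)>0$. (Equivalently, $M_0>0$ forces $v_0$ to be nonconstant, and the flow preserves the range of $v$.) Hence $M(t)>0$ for every $t<\tau^*$.

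All of this is done on the single event of Lemma~\ref{lem:flow}, on which \eqref{eq:invariance} and the diffeomorphism property hold simultaneously for all $t<\tau^*$ and all $x$. The conclusion is therefore almost sure uniformly in $t$, and no intersection over uncountably many null sets is needed.

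The only delicate point is justifying that the principal value is a genuine nonnegative Lebesgue integral. This needs the vanishing of $v_x$ at the maximum and $C^2$ regularity, both of which follow from $s>7/2$. Apart from that, the argument is elementary.
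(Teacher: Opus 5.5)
Your proof is correct and follows the paper's argument: you show that the kernel integral for $\Lambda v$ at the transported maximum $z_0(t)$ converges absolutely, using $v_x(t,z_0(t))=0$ and $C^2$ regularity, which gives $M(t)\ge 0$; strict positivity then follows because $v(t,\cdot)$ is nonconstant, a property it inherits from $v_0$ through \eqref{eq:invariance}. Your detour through $L^2$ (forcing the constant to be $0$) is harmless but unnecessary, since the paper simply notes that $v_0\circ\phi^{-1}(t,\cdot)$ is nonconstant whenever $v_0$ is.
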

	
	\begin{proof}
		Fix $\omega$ in the common full-probability event constructed above and let $t<\tau^*(\omega)$. Since $z_0(t)$ is a maximum point and
		$v_x(t,z_0(t))=0$, the singular-integral representation of $\Lambda$
		in \eqref{eq: Hilbert Lambda kernel} is an absolutely convergent integral at $z_0(t)$, with no principal-value interpretation required. Then we arrive at
		\begin{equation}\label{eq:M-positive-integral}
			M(t)
			=
			\frac1\pi
			\int_\R
			\frac{v(t,z_0(t))-v(t,y)}{(z_0(t)-y)^2} \d y
			\ge0.
		\end{equation}
		The assumption $M_0>0$ implies that $v_0$ is not constant. By
		\eqref{eq:invariance}, $v(t,\cdot)=v_0\circ\phi^{-1}(t,\cdot)$ is
		also not constant. Hence there exists $y_1\in\R$ such that
		$v(t,y_1)<v(t,z_0(t))$. By continuity, this strict inequality holds
		on a set of positive measure. The integral in
		\eqref{eq:M-positive-integral} is absolutely convergent: near
		$y=z_0(t)$ this follows from $v_x(t,z_0(t))=0$ and
		$v(t,\cdot)\in C_b^2$, while at infinity it follows from the boundedness
		of $v$. Therefore the nonnegative integral is strictly positive.
	\end{proof}

	\subsection{Evolution of the nonlocal quantity and related estimates}\label{Section:blow-up-nonlocal-estimates}
	
	Set
	\[
	u\triangleq\Lambda v.
	\]
	Applying $\Lambda$ to \eqref{eq:sccf+transport noise} gives, in
	Stratonovich form,
	\begin{equation*}
		\d u
		=
		-\Lambda\bigl((\mathcal Hv)v_x\bigr) \d t
		+
		\Lambda(\sigma v_x)\circ{\rm d}W(t).
	\end{equation*}
	Applying the It\^o--Wentzell formula to
	$M(t)=u(t,z_0(t))$, and using \eqref{eq:char-flow}, yields
	\begin{align}
		\mathrm{d} M(t)
		=
		\left[
		-\Lambda\bigl((\mathcal Hv)v_x\bigr)
		+(\mathcal Hv)\Lambda v_x
		\right](t,z_0(t)) \d t +
		\left[
		\Lambda(\sigma v_x)-\sigma\Lambda v_x
		\right](t,z_0(t))\circ{\rm d}W(t).
		\label{eq:M-Strat-raw}
	\end{align}
	
	For the drift, the identities
	\[
	v_x=\mathcal H\Lambda v,
	\qquad
	\Lambda v_x=-\mathcal Hv_{xx},
	\]
	and Lemma~\ref{lem:identity}, evaluated at the maximum point $z_0(t)$,
	give
	\begin{equation}\label{eq:drift-simplified}
		\left[
		-\Lambda\bigl((\mathcal Hv)v_x\bigr)
		+(\mathcal Hv)\Lambda v_x
		\right](t,z_0(t))
		=
		\frac12M(t)^2
		+
		\mathcal E[v(t,\cdot)](z_0(t)),
	\end{equation}
	where $\mathcal E\ge0$ is defined in \eqref{eq:E-functional}.
	
	For the noise coefficient, define
	\begin{equation}\label{eq:Gamma-def}
		\Gamma(t)
		\triangleq
		[\Lambda,\sigma]v_x(t,z_0(t)).
	\end{equation}
	Using the same truncation in the two principal-value integrals, one has,
	for every $z\in\R$,
	\begin{align*}
		[\Lambda,\sigma]v_x(t,z)
		=
		\frac1\pi\,\pv\int_\R
		\frac{\sigma(z)-\sigma(y)}{(z-y)^2}v_x(t,y) \d y.
	\end{align*}
	With the kernel $\mathsf K_\sigma$ from Lemma~\ref{Lem:sigma-kernel},
	\[
	\frac{\sigma(z)-\sigma(y)}{(z-y)^2}
	=
	\frac{\sigma'(z)}{z-y}+\mathsf K_\sigma(z,y),
	\]
	and therefore
	\begin{equation}\label{eq:Gamma-decomp}
		\Gamma(t)
		=
		\sigma'(z_0(t))M(t)
		+
		\mathcal R_\sigma[v](t,z_0(t)),\quad \text{where}\quad
		\mathcal R_\sigma[v](t,z)
		\triangleq
		\frac1\pi\int_\R \mathsf K_\sigma(z,y)v_x(t,y) \d y.
	\end{equation}
	Indeed, with the convention used in \eqref{eq: Hilbert Lambda kernel},
	\[
	\frac1\pi\,\pv\int_\R\frac{v_x(t,y)}{z-y} \d y
	=-\mathcal Hv_x(t,z)
	=\Lambda v(t,z).
	\]
	Note that $\mathsf K_\sigma(z,\cdot),v_x\in L^2$ implies that $\mathsf K_\sigma(z,\cdot)v_x(\cdot)\in L^1$.
	Integrating $\mathcal R_\sigma[v](t,z)$ by parts on a finite interval and then
	letting its endpoints tend to infinity gives
	\begin{equation}\label{eq:R-IBP}
		\mathcal R_\sigma[v](t,z)
		=
		-\frac1\pi
		\int_\R
		\partial_y\mathsf K_\sigma(z,y)v(t,y) \d y.
	\end{equation}
	Indeed, for each fixed $z$, $\mathsf K_\sigma(z,y) = \mathcal{O}(\vert{}y\vert{}^{-1})$ as $\vert{}y\vert{}\to\infty$ (since $\sigma \in \mathscr{S}$). This fact, together with $v(t,\cdot)\in L^\infty$, ensures the boundary terms vanish.

	By
	Lemma~\ref{Lem:sigma-kernel} and \eqref{eq:Linfty-invariance},
	\begin{equation}\label{eq:R-L-infty}
		|\mathcal R_\sigma[v](t,z)|
		\le
		C_\sigma V_{\infty},
		\quad
		C_\sigma
		\triangleq
		\frac1\pi
		\sup_{z\in\R}
		\|\partial_y\mathsf K_\sigma(z,\cdot)\|_{L^1}.
	\end{equation}
	Consequently, after increasing the constant if necessary, there exists
	$c_\sigma>0$, depending only on $\sigma$, such that
	\begin{equation}\label{eq:Gamma-bound}
		|\Gamma(t)|
		\le
		c_\sigma\bigl(M(t)+V_{\infty}\bigr),
		\quad
		0\le t<\tau^*.
	\end{equation}
	Here we used the strict positivity of $M$ from Lemma~\ref{lem:M-positive}.
	Combining \eqref{eq:M-Strat-raw}, \eqref{eq:drift-simplified}, and
	\eqref{eq:Gamma-def}, we obtain
	\begin{equation}\label{eq:M-Strat}
		\d M(t)
		=
		\left[
		\frac12M(t)^2
		+
		\mathcal E[v(t,\cdot)](z_0(t))
		\right]\d t
		+
		\Gamma(t)\circ{\rm d}W(t).
	\end{equation}

	We next estimate the It\^o correction generated by $\Gamma$. Note that
	\[
	\Gamma(t)\circ{\rm d}W(t)
	=
	\Gamma(t) \d W(t)
	+
	\frac12 \d\langle\Gamma,W\rangle_t.
	\]
	Write
	\begin{equation*}
		\Gamma(t)=\Gamma_{\mathrm{loc}}(t)+\Gamma_{\mathrm{rem}}(t),
		\quad
		\Gamma_{\mathrm{loc}}(t)\triangleq\sigma'(z_0(t))M(t),
		\quad
		\Gamma_{\mathrm{rem}}(t)\triangleq \mathcal R_\sigma[v](t,z_0(t)).
	\end{equation*}
	The It\^o form of \eqref{eq:char-flow} is
	\begin{equation}\label{eq:z0-Ito}
		\d z_0(t)
		=
		\left[
		\mathcal Hv(t,z_0(t))
		+
		\frac12\sigma(z_0(t))\sigma'(z_0(t))
		\right]\d t
		-
		\sigma(z_0(t)) \d W(t),
	\end{equation}
	and hence
	\begin{equation*}
		\d\langle z_0,W\rangle_t
		=
		-\sigma(z_0(t)) \d t.
	\end{equation*}
	The martingale coefficient of $M$ in the It\^o form of
	\eqref{eq:M-Strat} is $\Gamma$, so
	\begin{equation*}
		\d\langle M,W\rangle_t
		=
		\Gamma(t) \d t.
	\end{equation*}
	It follows that
	\begin{align}
		\d\langle\Gamma_{\mathrm{loc}},W\rangle_t
		=
		\left[
		\sigma'(z_0(t))\Gamma(t)
		-
		\sigma(z_0(t))\sigma''(z_0(t))M(t)
		\right]\d t.
		\label{eq:Gamma-loc-W-quad}
	\end{align}
	
	To compute the bracket of $\Gamma_{\mathrm{rem}}$ with $W$, we first verify that $\Gamma_{\mathrm{rem}}$ is
	a continuous local semimartingale. By \eqref{eq:R-IBP},
	\[
	\Gamma_{\mathrm{rem}}(t)
	=
	-\frac1\pi
	\int_\R
	\partial_y\mathsf K_\sigma(z_0(t),y)v(t,y)\d y.
	\]
	
	As in Remark~\ref{Remark-localisation-regularity}, we will first carry out the analysis on
	$[0,T\wedge\tau_R]$,
	where $\tau_R$ is given in \eqref{eq:tau-R-blowup}. Since $R$ and
	$T$ are arbitrary and $\tau_R\uparrow\tau^*$, the resulting
	identities hold locally on $[0,\tau^*)$.
	
	For notational convenience, we set
	\begin{equation}\label{eq:q=Ky}
		q(z,y)
	\triangleq
	\partial_y\mathsf K_\sigma(z,y),
	\qquad
	q_z
	\triangleq
	\partial_zq,
	\qquad
	q_{zz}
	\triangleq
	\partial_z^2q
	\end{equation}
	and
	define the characteristic drift and the
	field drift, respectively, by
	\begin{align*}
		\mathfrak a(t)
		\triangleq
		\mathcal Hv(t,z_0(t))
		+
		\frac12
		\sigma(z_0(t))\sigma'(z_0(t)),
		\quad
		\mathfrak b(t,y)
		\triangleq
		-(\mathcal Hv)(t,y)v_x(t,y)
		+
		\frac12
		\sigma(y)
		\partial_y\bigl(\sigma(y)v_x(t,y)\bigr).
	\end{align*}
	Then \eqref{eq:z0-Ito} and the It\^o form of \eqref{eq:sccf+transport noise} can be
	written as
	\begin{align*}
		\mathrm{d}z_0(t)
		=
		\mathfrak a(t) \d t
		-
		\sigma(z_0(t))\d W(t),
	\end{align*}
	and
	\begin{align*}
		\mathrm{d}v(t,y)
		=
		\mathfrak b(t,y) \d t
		+
		\sigma(y)v_x(t,y)\d W(t),
	\end{align*}
	respectively.
	For every fixed $y\in\R$, It\^o's formula gives
	\begin{align*}
		\mathrm{d}q(z_0(t),y)
		=
		\Bigl[
		\mathfrak a(t)
		q_z(z_0(t),y)
		+
		\frac12
		\sigma(z_0(t))^2
		q_{zz}(z_0(t),y)
		\Bigr]\d t-
		\sigma(z_0(t))
		q_z(z_0(t),y)\d W(t).
	\end{align*}
	Hence, by the product It\^o formula,
	\begin{equation}\label{eq:Gamma-rem-integrand-Ito}
		\mathrm{d}\bigl(q(z_0(t),y)v(t,y)\bigr)
		=
		\mathscr D(t,y) \d t
		+
		\mathscr G(t,y)\d W(t),
	\end{equation}
	where
	\begin{align}
		\mathscr D(t,y)
		\triangleq \ &
		q(z_0(t),y)
		\mathfrak b(t,y)
		+
		\mathfrak a(t)
		q_z(z_0(t),y)v(t,y)  \notag\\
		&+
		\frac12
		\sigma(z_0(t))^2
		q_{zz}(z_0(t),y)v(t,y) -
		\sigma(z_0(t))\sigma(y)
		q_z(z_0(t),y)v_x(t,y),
		\label{eq:Gamma-rem-integrand-drift}
	\end{align}
	\begin{align}
		\mathscr G(t,y)
		\triangleq \ &
		-\sigma(z_0(t))
		q_z(z_0(t),y)v(t,y) +
		q(z_0(t),y)\sigma(y)v_x(t,y).
		\label{eq:Gamma-rem-integrand-martingale}
	\end{align}
	Here the term containing $q_{zz}$ is the second-order It\^o
	correction generated by the stochastic motion of $z_0$, whereas
	the last term in \eqref{eq:Gamma-rem-integrand-drift} is the
	quadratic-covariation contribution between
	$q(z_0(t),y)$ and $v(t,y)$.
	
	By Lemma~\ref{Lem:sigma-kernel}, applied with
	\[
	(a,b)=(0,1),\qquad
	(a,b)=(1,1),\qquad
	(a,b)=(2,1),
	\]
	we have
	\begin{equation}\label{eq:q-kernel-bounds-for-Gamma-rem}
		\sup_{z\in\R}
		\left(
		\|q(z,\cdot)\|_{L_y^1}
		+
		\|q_z(z,\cdot)\|_{L_y^1}
		+
		\|q_{zz}(z,\cdot)\|_{L_y^1}
		\right)
		<\infty.
	\end{equation}
	On $[0,T\land \tau_R]$, the quantities
	\[
	\|v(t)\|_{L^\infty},
	\|v_x(t)\|_{L^\infty},
	\|\mathfrak b(t,\cdot)\|_{L^\infty},
	|\mathfrak a(t)|
	\]
	are bounded by the localised Sobolev estimates in
	Remark~\ref{Remark-localisation-regularity}. Since $\sigma\in \mathscr{S}(\R; \R)$, it follows from
	\eqref{eq:Gamma-rem-integrand-drift},
	\eqref{eq:Gamma-rem-integrand-martingale}, and
	\eqref{eq:q-kernel-bounds-for-Gamma-rem} that, for some deterministic
	constant $C_{R,\sigma}<\infty$,
	\begin{equation*}
		\int_\R
		|\mathscr D(t,y)|\d y
		+
		\left(
		\int_\R
		|\mathscr G(t,y)|\d y
		\right)^2
		\le
		C_{R,\sigma},
		\qquad
		0\le t\le T\land \tau_R.
	\end{equation*}
	Thus deterministic Fubini applies to the finite-variation part of
	\eqref{eq:Gamma-rem-integrand-Ito}, while the stochastic Fubini theorem applies to its
	martingale part. Alternatively, the stochastic interchange can be
	justified by first truncating the spatial integral and then using
	the It\^o isometry and dominated convergence. We obtain
	\begin{align}
		\Gamma_{\mathrm{rem}}(t\wedge T\land \tau_R )
		=
		\Gamma_{\mathrm{rem}}(0)
		-
		\frac1\pi
		\int_0^{t\wedge T\land \tau_R }
		\int_\R
		\mathscr D(r,y)\d y\d r -
		\frac1\pi
		\int_0^{t\wedge T\land \tau_R }
		\left(
		\int_\R
		\mathscr G(r,y)\d y
		\right)\d W(r).
		\label{eq:Gamma-rem-semimartingale-decomposition}
	\end{align}
	In particular, after removing the localisation, $\Gamma_{\mathrm{rem}}$ is a continuous
	local semimartingale on $[0,\tau^*)$.
	
	Only the martingale coefficient in
	\eqref{eq:Gamma-rem-semimartingale-decomposition} contributes to the
	quadratic covariation with $W$. Therefore,
	$\d t\otimes\d\mathbb P$-almost everywhere on
	$\{t<\tau^*\}$,
	\begin{align*}
		\frac{\d}{\d t}\langle\Gamma_{\mathrm{rem}},W\rangle_t
		= 
		-\frac1\pi
		\int_\R
		\mathscr G(t,y)\d y 
		= 
		\frac1\pi
		\int_\R
		\sigma(z_0(t))
		q_z(z_0(t),y)v(t,y)\d y-
		\frac1\pi
		\int_\R
		q(z_0(t),y)\sigma(y)v_x(t,y)\d y.
	\end{align*}

	For any fixed $z\in\mathbb R$, we can infer from the estimate $\|\sigma(\cdot)q(z,\cdot)\|_{L^1}\le\|\sigma\|_{L^\infty}\|q(z,\cdot)\|_{L^1}$, \eqref{eq:q=Ky}, and \eqref{eq:weighted-kernel-derivative-L1-bound} that $\sigma(\cdot)q(z,\cdot)\in W^{1,1}(\mathbb R)$.
	Since
	$v(t,\cdot)\in W^{1,\infty}(\R)$, integration by parts on $\R$
	yields
	\[
	-\int_\R
	q(z_0(t),y)\sigma(y)v_x(t,y)\d y
	=
	\int_\R
	\partial_y
	\bigl[
	\sigma(y)q(z_0(t),y)
	\bigr]
	v(t,y)\d y.
	\]
	Recalling that
	\[
	q(z,y)
	=
	\partial_y\mathsf K_\sigma(z,y),
	\qquad
	q_z(z,y)
	=
	\partial_z\partial_y\mathsf K_\sigma(z,y),
	\]
	we conclude that
	\begin{align}
		\frac{\d}{\d t}\langle\Gamma_{\mathrm{rem}},W\rangle_t
		=
		\frac1\pi
		\int_\R
		\Bigl[
		\sigma(z_0(t))
		\partial_z\partial_y\mathsf K_\sigma(z_0(t),y)
		+
		\partial_y
		\bigl(
		\sigma(y)
		\partial_y\mathsf K_\sigma(z_0(t),y)
		\bigr)
		\Bigr]
		v(t,y)\d y.
		\label{eq:Gamma-rem-W-quad-final}
	\end{align}
	
	Combining \eqref{eq:Gamma-loc-W-quad} and
	\eqref{eq:Gamma-rem-W-quad-final}, define, for $0\le t<\tau^*$,
	\begin{equation}\label{eq:kappa-rate-define}
		\kappa_{\Gamma}(t)
		\triangleq
		\frac{\d}{\d t}\langle\Gamma,W\rangle_t=
		\sigma'(z_0(t))\Gamma(t)
		-
		\sigma(z_0(t))\sigma''(z_0(t))M(t)
		+
		\Xi(t),
	\end{equation}
	where
	\begin{align*}
		\Xi(t)
		\triangleq
		\frac1\pi
		\int_\R
		\Bigl[
		\sigma(z_0(t))
		\partial_z\partial_y\mathsf K_\sigma(z_0(t),y)
		+
		\partial_y\bigl(
		\sigma(y)\partial_y\mathsf K_\sigma(z_0(t),y)
		\bigr)
		\Bigr]
		v(t,y)\,\d y.
	\end{align*}
	By Lemma~\ref{Lem:sigma-kernel} and \eqref{eq:Linfty-invariance},
	\begin{equation}\label{eq:Xi-rate-bound}
		|\Xi(t)|\le C_{\sigma,1}V_{\infty}
	\end{equation}
	for a constant $C_{\sigma,1}$ depending only on $\sigma$.
	Together with \eqref{eq:Gamma-bound}, this implies that there exist
	constants $C_{\sigma,2},C_{\sigma,3}>0$, depending only on $\sigma$, such
	that
	\begin{equation}\label{eq:Gamma-W-quad-estimate}
		\kappa_\Gamma(t)
		\ge
		-C_{\sigma,2}M(t)-C_{\sigma,3}V_{\infty},
		\qquad 0\le t<\tau^*.
	\end{equation}
	
	\begin{Remark}\label{Remark-kappa-measurability}
		If
		\[
		A_\sigma(z,y)
		\triangleq
		\sigma(z)\partial_z\partial_y\mathsf K_\sigma(z,y)
		+
		\partial_y\bigl(\sigma(y)\partial_y\mathsf K_\sigma(z,y)\bigr),
		\]
		then Lemma~\ref{Lem:sigma-kernel}, together with the same estimates after
		one additional $z$-derivative, gives
		\[
		\sup_{z\in\R}
		\left(
		\|A_\sigma(z,\cdot)\|_{L^1}
		+
		\|\partial_zA_\sigma(z,\cdot)\|_{L^1}
		\right)<\infty.
		\]
		Consequently, the map
		$
		(z,f)\longmapsto
		\frac1\pi\int_\R A_\sigma(z,y)f(y)\,\d y
		$
		is continuous from $\R\times L^\infty(\R)$ to $\R$. Since $z_0$ has
		continuous adapted paths and
		$v\in C([0,\tau^*);H^s)\hookrightarrow C([0,\tau^*);L^\infty)$,
		the processes $\Xi$ and $\kappa_\Gamma$ are continuous and adapted, hence
		progressively measurable and locally integrable. 
			Moreover, for every
			$R,T>0$ and $t\ge0$, we have
			\begin{equation*} 
					\langle\Gamma,W\rangle_{t\wedge T\wedge\tau_R}
					=
					\int_0^{t\wedge T\wedge\tau_R}
					\kappa_\Gamma(r)\,\d r.
				\end{equation*}
			Thus \eqref{eq:kappa-rate-define} is a fixed progressively measurable
			version of the bracket density, rather than merely an unspecified
			Radon--Nikod\'ym representative.
	\end{Remark}

	Now we summarise the preceding calculation in the form used below.  
	\begin{Lemma}\label{lem:dM lemma}
		There exist constants $c_\sigma,c_1,c_2>0$, depending only on
		$\sigma$ through its $W^{2,\infty}$ norm and the kernel bounds in
		Lemma~\ref{Lem:sigma-kernel}, and a progressively measurable, locally integrable process
		$\mathscr R_M$, such that, for $0\le t<\tau^*$,
		\begin{equation}\label{eq:M-Ito-exact}
			\mathrm{d}M(t)
			=
			\left[
			\frac12M(t)^2+\mathscr R_M(t)
			\right]\d t
			+
			\Gamma(t) \d W(t),
		\end{equation}
		with
		\begin{equation}\label{eq:M-Ito-drift-bound}
			\mathscr R_M(t)\ge -c_1M(t)-c_2V_{\infty},
		\end{equation}
		and
		\begin{equation}\label{eq:Gamma-bound-summary}
			|\Gamma(t)|\le c_\sigma\bigl(M(t)+V_{\infty}\bigr).
		\end{equation}
	\end{Lemma}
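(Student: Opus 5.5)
The plan is to convert the Stratonovich equation \eqref{eq:M-Strat} into It\^o form and read off $\mathscr R_M$ explicitly. Since $\Gamma(t)\circ\d W(t)=\Gamma(t)\,\d W(t)+\frac12\,\d\langle\Gamma,W\rangle_t$, and Remark~\ref{Remark-kappa-measurability} identifies $\d\langle\Gamma,W\rangle_t=\kappa_\Gamma(t)\,\d t$ with a continuous adapted density on each localised interval $[0,T\wedge\tau_R]$, I will set
\[
\mathscr R_M(t)\triangleq\mathcal E[v(t,\cdot)](z_0(t))+\frac12\kappa_\Gamma(t),\qquad 0\le t<\tau^*.
\]
Then \eqref{eq:M-Ito-exact} follows from \eqref{eq:M-Strat} on every $[0,T\wedge\tau_R]$. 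Letting $R,T\to\infty$, with $\tau_R\uparrow\tau^*$ as in Remark~\ref{Remark-localisation-regularity}, gives the identity on $[0,\tau^*)$.

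The measurability and local integrability of $\mathscr R_M$ come from two sources. The term $\kappa_\Gamma$ is continuous and adapted by Remark~\ref{Remark-kappa-measurability}. For the term $\mathcal E[v(t,\cdot)](z_0(t))$, I would avoid a direct continuity argument for the $\dot H^{1/2}$ functional. Instead, I would use \eqref{eq:drift-simplified}: it expresses $\mathcal E$ as the left-hand drift expression minus $\frac12M^2$. That left-hand side is continuous in $t$, because $\Lambda((\mathcal Hv)v_x)$ and $(\mathcal Hv)\Lambda v_x$ lie in $C([0,\tau^*);C_b)$ by the regularity in Remark~\ref{Remark-localisation-regularity}, and $z_0$ is continuous. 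Hence $\mathcal E(\cdot)$ is continuous and adapted, and in particular locally bounded on $[0,\tau^*)$.

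For the lower bound \eqref{eq:M-Ito-drift-bound}, I would combine $\mathcal E\ge0$ with \eqref{eq:Gamma-W-quad-estimate}:
\[
\mathscr R_M(t)\ge \frac12\kappa_\Gamma(t)\ge -\frac{C_{\sigma,2}}2M(t)-\frac{C_{\sigma,3}}2V_\infty .
\]
This gives $c_1=C_{\sigma,2}/2$ and $c_2=C_{\sigma,3}/2$. The bound \eqref{eq:Gamma-bound-summary} is exactly \eqref{eq:Gamma-bound}. That estimate follows from \eqref{eq:Gamma-decomp}, the inequality $|\sigma'(z_0)M|\le\|\sigma'\|_{L^\infty}M$ (valid because $M>0$ by Lemma~\ref{lem:M-positive}), and \eqref{eq:R-L-infty}. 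It remains to track that every constant depends only on $\|\sigma\|_{W^{2,\infty}}$ and on the $L^1$ kernel bounds of Lemma~\ref{Lem:sigma-kernel}:
\begin{itemize}
\item in \eqref{eq:Gamma-loc-W-quad}, the contribution is $\|\sigma'\|_{L^\infty}|\Gamma|+\|\sigma\sigma''\|_{L^\infty}M$;
\item in \eqref{eq:Xi-rate-bound}, the constant comes from \eqref{eq:general-kernel-L1-bound} with $(a,b)=(1,1)$ together with \eqref{eq:weighted-kernel-derivative-L1-bound}.
\end{itemize}

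The main delicate point is justifying that the It\^o--Stratonovich correction really equals $\frac12\kappa_\Gamma$. This requires $\Gamma$ to be a continuous semimartingale whose bracket with $W$ is computed jointly with the motion of $z_0$, including the $q_{zz}$ correction and the cross-variation term in \eqref{eq:Gamma-rem-integrand-drift}. All of this has already been established in \eqref{eq:Gamma-loc-W-quad}, \eqref{eq:Gamma-rem-semimartingale-decomposition}, \eqref{eq:Gamma-rem-W-quad-final} and Remark~\ref{Remark-kappa-measurability}. What remains is the bookkeeping of the localisation.
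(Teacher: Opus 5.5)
Your proposal is correct and follows the paper's proof essentially step for step. You make the same choice $\mathscr R_M=\mathcal E[v(t,\cdot)](z_0(t))+\frac12\kappa_\Gamma(t)$, obtain continuity and adaptedness of the energy term through \eqref{eq:drift-simplified}, and derive the lower bound from $\mathcal E\ge0$ combined with \eqref{eq:Gamma-W-quad-estimate}, with \eqref{eq:Gamma-bound-summary} being \eqref{eq:Gamma-bound}.
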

	
	\begin{proof}
		Set
		\begin{equation}\label{eq:energy-rate-definition}
			\mathcal E_M(t)
			\triangleq
			\mathcal E[v(t,\cdot)](z_0(t)).
		\end{equation}
		By \eqref{eq:drift-simplified},
		\[
		\mathcal E_M(t)
		=
		\left[
		-\Lambda\bigl((\mathcal Hv)v_x\bigr)
		+(\mathcal Hv)\Lambda v_x
		\right](t,z_0(t))
		-
		\frac12M(t)^2.
		\]
		The map
		\[
		v\longmapsto
		-\Lambda\bigl((\mathcal Hv)v_x\bigr)
		+(\mathcal Hv)\Lambda v_x
		\]
		is continuous from $H^s$ to $H^{s-2}\hookrightarrow C_b(\R)$.
		Since $v\in C([0,\tau^*);H^s)$ and $z_0$ and $M$ have continuous
		adapted paths, $\mathcal E_M$ is continuous and adapted, hence
		progressively measurable and locally integrable.
		
		Define
		\[
		\mathscr R_M(t)
		\triangleq
		\mathcal E_M(t)+\frac12\kappa_\Gamma(t).
		\]
		By \eqref{eq:kappa-rate-define}, the Stratonovich equation
		\eqref{eq:M-Strat} is equivalent to \eqref{eq:M-Ito-exact} with this
		choice of $\mathscr R_M$. Since $\mathcal E_M\ge0$, estimate
		\eqref{eq:Gamma-W-quad-estimate} yields
		\eqref{eq:M-Ito-drift-bound} after renaming the constants. Estimate
		\eqref{eq:Gamma-bound-summary} is \eqref{eq:Gamma-bound}.
	\end{proof}
	
	\subsection{The reciprocal process}
	
	Set
	\begin{equation*}
		Y(t)\triangleq\frac1{M(t)},
		\quad
		Y_0\triangleq Y(0)=\frac1{M_0}.
	\end{equation*}
	By Lemma~\ref{lem:M-positive}, $Y$ is a well-defined, continuous,
	strictly positive semimartingale on the entire stochastic interval
	$[0,\tau^*)$.
	
	Applying It\^o's formula to $Y=M^{-1}$, first after stopping $M$ in a
	compact subinterval of $(0,\infty)$ and then removing this auxiliary
	localisation, and using \eqref{eq:M-Ito-exact}, we obtain
	\begin{equation}\label{eq:Y-Ito-exact}
		\mathrm{d}Y(t)
		=
		b_Y(t) \d t
		+
		\nu_Y(t)Y(t) \d W(t),
	\end{equation}
	where
	\begin{equation*}
		b_Y(t)
		\triangleq
		-\frac12
		-
		\frac{\mathscr R_M(t)}{M(t)^2}
		+
		\frac{\Gamma(t)^2}{M(t)^3},\quad
		\nu_Y(t)
		\triangleq
		-\frac{\Gamma(t)}{M(t)}.
	\end{equation*}
	By \eqref{eq:M-Ito-drift-bound},
	\[
	-\frac{\mathscr R_M(t)}{M(t)^2}
	\le
	c_1Y(t)+c_2V_{\infty}Y(t)^2.
	\]
	Moreover, \eqref{eq:Gamma-bound-summary} gives
	\begin{equation}\label{eq:nuY-bound}
		|\nu_Y(t)|
		\le
		c_\sigma\bigl(1+V_{\infty}Y(t)\bigr)
	\end{equation}
	and
	\[
	\frac{\Gamma(t)^2}{M(t)^3}
	=
	Y(t)\left(\frac{\Gamma(t)}{M(t)}\right)^2
	\le
	2c_\sigma^2Y(t)
	+
	2c_\sigma^2V_{\infty}^2Y(t)^3.
	\]
	Thus, with
	\begin{equation*}
		a_1\triangleq c_1+2c_\sigma^2,
	\end{equation*}
	we have the drift estimate
	\begin{equation}\label{eq:dY-bound-full}
		b_Y(t)
		\le
		-\frac12
		+
		a_1Y(t)
		+
		c_2V_{\infty}Y(t)^2
		+
		2c_\sigma^2V_{\infty}^2Y(t)^3.
	\end{equation}
	
	Choose $Y_0$ sufficiently small so that
	\begin{equation}\label{eq:Y0-condition}
		2a_1Y_0
		+
		4c_2V_{\infty}Y_0^2
		+
		16c_\sigma^2V_{\infty}^2Y_0^3
		\le
		\frac14.
	\end{equation}
	Using the progressive-extension convention stated in Remark \ref{Remark-progressive measurability}, set
	\[
	\mathcal O_Y
	\triangleq
	\mathcal I_{\tau^*}
	\cap
	\bigl\{(\omega,t):Y(t,\omega)\ge2Y_0\bigr\}.
	\]
	This is a progressive subset of $\Omega\times[0,\infty)$. Hence, by the
	d\'ebut theorem, its first entrance time
	\[
	\widehat\tau_Y
	\triangleq
	\inf\bigl\{t\ge0:(\omega,t)\in\mathcal O_Y\bigr\}
	\]
	is a stopping time. We set 
	\begin{equation*}
		\tau_Y
		\triangleq
		\tau^*\wedge\widehat\tau_Y.
	\end{equation*}
	Moreover,
	\begin{equation}\label{eq:Y-before-tauY}
		0<Y(t)<2Y_0,
		\quad
		0\le t<\tau_Y,
	\end{equation}
	and, if $\tau_Y<\tau^*$, continuity gives
	\begin{equation}\label{eq:Y-at-tauY}
		Y(\tau_Y)=2Y_0.
	\end{equation}
	By \eqref{eq:Y0-condition}, \eqref{eq:dY-bound-full}, and
	\eqref{eq:Y-before-tauY},
	\begin{equation}\label{eq:bY-negative}
		b_Y(t)\le-\frac14,
		\quad
		0\le t<\tau_Y.
	\end{equation}
	
	Define the globally stopped integrand
	\begin{equation*}
		\mathfrak h_Y(t)
		\triangleq
		\begin{cases}
			\nu_Y(t)Y(t),&0\le t<\tau_Y,\\
			0,&t\ge\tau_Y.
		\end{cases}
	\end{equation*}
	The process $\mathfrak h_Y$ is progressively measurable. By
	\eqref{eq:nuY-bound} and \eqref{eq:Y-before-tauY}, it is bounded by a
	deterministic constant. Hence the stochastic integral below is a
	square-integrable continuous martingale on every finite time interval:
	\begin{equation*}
		\mathcal N(t)
		\triangleq
		\int_0^t \mathfrak h_Y(r) \d W(r),
		\quad t\ge0.
	\end{equation*}
	The process $\mathcal N$ is defined on the whole probability space and
	for all $t\ge0$, including on paths for which $\tau^*$ is finite. For
	$t<\tau_Y$, integrating \eqref{eq:Y-Ito-exact} and using
	\eqref{eq:bY-negative} gives
	\begin{equation}\label{eq:Y-ineq-integrated}
		Y(t)
		\le
		Y_0-\frac14t+\mathcal N(t).
	\end{equation}
	If $\tau_Y<\tau^*$, the same inequality holds at $t=\tau_Y$ by taking
	$t\uparrow\tau_Y$ and using continuity.
	
	\subsection{Conclusion of the proof of Theorem~\ref{Thm-blowup}}
	
	On $[0,\tau_Y)$, estimates \eqref{eq:nuY-bound} and
	\eqref{eq:Y-before-tauY} imply
	\begin{equation*}
		|\nu_Y(t)|
		\le
		c_\sigma\bigl(1+2V_{\infty}Y_0\bigr)
		\triangleq
		\overline\nu_Y.
	\end{equation*}
	Set
	\begin{equation*}
		T^*\triangleq8Y_0=\frac8{M_0}.
	\end{equation*}
	Since $\mathfrak h_Y=0$ after $\tau_Y$, we have the deterministic quadratic-variation
	bound
	\begin{align*}
		\langle\mathcal N\rangle_{T^*}
		\le
		\overline\nu_Y^{\,2}(2Y_0)^2T^*
		=
		32\overline\nu_Y^{\,2}Y_0^3.
	\end{align*}
	Consequently, Markov's inequality, Doob's $L^2$ maximal inequality, and
	It\^o's isometry give
	\begin{align*}
		\mathbb P\left(
		\sup_{0\le t\le T^*}|\mathcal N(t)|>\frac{Y_0}{2}
		\right)\le
		\frac4{Y_0^2}
		\mathbb E\left[
		\sup_{0\le t\le T^*}|\mathcal N(t)|^2
		\right]
		\le
		\frac{16}{Y_0^2}
		\mathbb E\langle\mathcal N\rangle_{T^*}
		\le
		512\overline\nu_Y^{\,2}Y_0.
	\end{align*}
	Define the event
	\begin{equation*}
		\Omega_0
		\triangleq
		\left\{
		\sup_{0\le t\le T^*}|\mathcal N(t)|
		\le
		\frac{Y_0}{2}
		\right\}.
	\end{equation*}
	Then
	\begin{equation}\label{eq:good-probability}
		\mathbb P(\Omega_0)
		\ge
		1-512\overline\nu_Y^{\,2}Y_0.
	\end{equation}
	
	We claim that
	\begin{equation}\label{eq:Omega0-implies-blowup}
		\Omega_0
		\subset
		\{\tau^*\le T^*\}.
	\end{equation}
	Suppose, to the contrary, that a path belongs to $\Omega_0$ and satisfies
	$\tau^*>T^*$. We first show that $\tau_Y>T^*$. If
	$\tau_Y\le T^*$, then $\tau_Y<\tau^*$ and
	\eqref{eq:Y-at-tauY} applies. Passing to $t=\tau_Y$ in
	\eqref{eq:Y-ineq-integrated} and using the definition of $\Omega_0$, we
	obtain
	\[
	2Y_0
	=Y(\tau_Y)
	\le
	Y_0-\frac14\tau_Y+\frac{Y_0}{2}
	\le
	\frac32Y_0,
	\]
	a contradiction. Hence $\tau_Y>T^*$.
	
	We may therefore evaluate \eqref{eq:Y-ineq-integrated} at $t=T^*$.
	Again using the definition of $\Omega_0$, we find
	\[
	Y(T^*)
	\le
	Y_0-\frac14T^*+\frac{Y_0}{2}
	=
	Y_0-2Y_0+\frac{Y_0}{2}
	=
	-\frac{Y_0}{2}<0.
	\]
	This contradicts the strict positivity of $Y$ on $[0,\tau^*)$.
	Therefore \eqref{eq:Omega0-implies-blowup} holds, and
	\begin{equation}\label{eq:pre-final-probability}
		\mathbb P\left(\tau^*\le\frac8{M_0}\right)
		\ge
		\mathbb P(\Omega_0).
	\end{equation}
	
	It remains to choose the threshold $C^*$. If $Y_0\le1$, then
	\begin{equation*}
		\overline\nu_Y
		\le
		c_\sigma(1+2V_{\infty})
		\triangleq
		\overline\nu_{Y,0}.
	\end{equation*}
	A sufficient choice is
	\begin{equation}\label{eq:C-star-choice}
		C^*
		\triangleq
		\max\left\{
		1,
		\ 24a_1,
		\ \sqrt{48c_2V_{\infty}},
		\ \bigl(192c_\sigma^2V_{\infty}^2\bigr)^{1/3},
		\ \frac{512\overline\nu_{Y,0}^{\,2}}{\varepsilon}
		\right\}.
	\end{equation}
	Indeed, if $M_0>C^*$, then $Y_0=1/M_0\le1$, and the first three
	nontrivial lower bounds in \eqref{eq:C-star-choice} imply
	\[
	2a_1Y_0\le\frac1{12},
	\qquad
	4c_2V_{\infty}Y_0^2\le\frac1{12},
	\qquad
	16c_\sigma^2V_{\infty}^2Y_0^3\le\frac1{12}.
	\]
	Thus \eqref{eq:Y0-condition} holds. Moreover,
	\[
	512\overline\nu_Y^{\,2}Y_0
	\le
	\frac{512\overline\nu_{Y,0}^{\,2}}{M_0}
	\le
	\varepsilon.
	\]
	Combining this estimate with \eqref{eq:good-probability} and
	\eqref{eq:pre-final-probability}, we conclude that
	\[
	\mathbb P\left(\tau^*\le\frac8{M_0}\right)
	\ge
	1-\varepsilon.
	\]
	This completes the proof of Theorem~\ref{Thm-blowup}.

	\begin{Remark}\label{Remark:Y=1/M necessary}
		The change of variables $Y=M^{-1}$ is not logically indispensable. Its purpose is to avoid a non-uniform upper localisation in $M$ and to
		incorporate all large-$M$ scales into a single estimate.
		
		To see the issue with a one-step argument at the level of $M$, assume
		that $M_0$ is sufficiently large that
		\begin{equation}\label{eq:M-drift-coercive-large}
			\frac12m^2-c_1m-c_2V_{\infty}\ge\frac14m^2,
			\quad m\ge\frac{M_0}{2}.
		\end{equation}
		For $N>M_0$, define
		\begin{equation*}
			\tau_{N}
			\triangleq
			\inf\bigl\{t\in[0,\tau^*):M(t)\ge N\bigr\},
		\end{equation*}
		and introduce the lower-level stopping time
		\begin{equation*}
			\wp
			\triangleq
			\inf\bigl\{t\in[0,\tau^*):M(t)\le M_0/2\bigr\}.
		\end{equation*}
		The latter is different from the sign stopping time
		\[
		\tau_{\mathrm{sgn}}
		=
		\inf\bigl\{t\in[0,\tau^*):M(t)\le0\bigr\}.
		\]
		By the progressive-extension convention (see Remark \ref{Remark-progressive measurability}), $\tau_N$, $\wp$, and
		$\tau_{\mathrm{sgn}}$ are stopping times, with
		$\inf\varnothing=\infty$. Positivity of $M$ rules out
		$\tau_{\mathrm{sgn}}$, whereas
		\eqref{eq:M-drift-coercive-large} is available only while $M$ remains
		above $M_0/2$.
		
		Set
		\[
		\tau_N^{\mathrm{loc}}
		\triangleq
		\wp\wedge\tau_{N}\wedge\tau^*
		\]
		and define the globally stopped martingale
		\[
		\mathfrak M_N(t)
		\triangleq
		\int_0^t
		\mathbf 1_{\{r<\tau_N^{\mathrm{loc}}\}}
		\Gamma(r) \d W(r).
		\]
		On $[0,\tau_N^{\mathrm{loc}})$, one has $M_0/2<M<N$, and hence
		\begin{equation*}
			\langle\mathfrak M_N\rangle_T
			\le
			c_\sigma^2(N+V_{\infty})^2T.
		\end{equation*}
		Consequently, Doob's inequality gives
		\begin{equation}\label{eq:MN-Doob-direct}
			\mathbb P\left(
			\sup_{0\le t\le T}
			|\mathfrak M_N(t)|
			>
			\frac{M_0}{4}
			\right)
			\le
			Cc_\sigma^2
			\frac{(N+V_{\infty})^2T}{M_0^2}.
		\end{equation}
		
		For $T>0$, define the good event
		\[
		\Omega_{N,T}^{M}
		\triangleq
		\left\{
		\sup_{0\le t\le T}|\mathfrak M_N(t)|
		\le\frac{M_0}{4}
		\right\}.
		\]
		On $\Omega_{N,T}^{M}$, the integral form of
		\eqref{eq:M-Ito-exact}, together with
		\eqref{eq:M-Ito-drift-bound} and
		\eqref{eq:M-drift-coercive-large}, yields, for every
		$0\le t<T\wedge\tau_N^{\mathrm{loc}}$,
		\begin{equation}\label{eq:M-Osgood-direct}
			M(t)
			\ge
			\frac{3M_0}{4}
			+
			\frac14\int_0^tM(r)^2\,\d r.
		\end{equation}
		We first rule out the lower stopping time. If
		\[
		\wp<T\wedge\tau_N\wedge\tau^*,
		\]
		then, by continuity and the definition of $\wp$,
		$M(\wp)=M_0/2$. Letting $t\uparrow\wp$ in
		\eqref{eq:M-Osgood-direct}, however, gives
		$M(\wp)\ge3M_0/4$, a contradiction.
		
		Now define, for $0\le t<T\wedge\tau_N\wedge\tau^*$,
		\[
		g(t)
		\triangleq
		\frac{3M_0}{4}
		+
		\frac14\int_0^tM(r)^2\,\d r.
		\]
		Then $M(t)\ge g(t)$ and
		\[
		g'(t)=\frac14M(t)^2\ge\frac14g(t)^2,\quad  \left(\frac1{g(t)}\right)'
		\le-\frac14,
		\qquad
		\frac1{g(t)}
		\le
		\frac4{3M_0}-\frac t4.
		\]
	
		Set
		\[
		T_0\triangleq\frac{16}{3M_0}.
		\]
		Taking $T=T_0$, if $\tau_N\wedge\tau^*>T_0$, then the preceding estimate
		holds for every $t<T_0$ and forces $g(t)\to\infty$ as $t\uparrow T_0$.
		This contradicts the continuity of $M$ on $[0,T_0]$ and the bound
		$M(t)<N$ there. Hence, on $\Omega_{N,T_0}^{M}$,
		\[
		\tau_N\wedge\tau^*\le\frac{16}{3M_0}.
		\]
		Using \eqref{eq:MN-Doob-direct} with $T=T_0$ therefore gives
		\begin{equation*} 
			\mathbb P\left(
			\tau_N\wedge\tau^*
			\le
			\frac{16}{3M_0}
			\right)
			\ge
			1-A_\sigma\frac{(N+V_{\infty})^2}{M_0^3}
		\end{equation*}
		for some constant $A_\sigma>0$.
		
		Since the error term grows with
		$N$, this estimate \textbf{cannot} be passed directly to the limit
		$N\to\infty$. Choosing $N=2M_0$ gives a high-probability doubling of
		$M$, or breakdown before the doubling time, but it does not by itself
		give explosion.
	\end{Remark}

	\subsection{Conditional blow-up rate along the transported maximum}
	\label{subsec:conditional-blowup-rate}
	
	We now prove Theorem~\ref{Thm-blowup-rate}.  The restriction to the event
	$\Omega_M$ in \eqref{eq:Omega-M-rate-main} is essential at the present
	level of the theory.  Indeed, the general blow-up criterion
	\eqref{eq:blow-up criterion statement} only asserts that at least one of
	$\|v_x(t)\|_{L^\infty}$ and $\|\Lambda v(t)\|_{L^\infty}$ becomes
	unbounded. It does not by itself imply that the quantity $M(t)$ attached to
	the transported maximum diverges.  No largeness assumption on $M_0$ is
	needed in this subsection.
	
	Recall the following quantities (cf. \eqref{eq:energy-rate-definition} and \eqref{eq:kappa-rate-define})
	\begin{equation*}
		\mathcal E_M(t)
		\triangleq
		\mathcal E[v(t,\cdot)](z_0(t))
		\ge0,\qquad  \kappa_{\Gamma}(t)
		\triangleq
		\frac{\d}{\d t}\langle\Gamma,W\rangle_t,
	\end{equation*}
	As pointed out in Remark \ref{Remark-kappa-measurability} and the proof of Lemma \ref{lem:dM lemma}, $\mathcal E_M$ and $\kappa_{\Gamma}(t)$ are
	progressively measurable and locally integrable on
	$\mathcal I_{\tau^*}$. Since $M$ is continuous and strictly
	positive there, the same is true locally for
	$\eta=\mathcal E_M/M^2$.

	By \eqref{eq:kappa-rate-define},
	\eqref{eq:Xi-rate-bound}, and \eqref{eq:Gamma-bound}, there exists a
	constant $C_{\sigma,4}>0$, depending only on $\sigma$, such that
	\begin{equation}\label{eq:kappa-rate-two-sided}
		|\kappa_{\Gamma}(t)|
		\le
		C_{\sigma,4}\bigl(M(t)+V_{\infty}\bigr),
		\qquad 0\le t<\tau^*.
	\end{equation}

	As in Lemma \ref{lem:dM lemma}, the It\^o form of \eqref{eq:M-Strat} is
	\begin{equation*} 
		\d M(t)
		=
		\left[
		\frac12M(t)^2+\mathcal E_M(t)+\frac12\kappa_{\Gamma}(t)
		\right]\d t
		+
		\Gamma(t)\d W(t).
	\end{equation*}
	Since $M(t)>0$ by Lemma~\ref{lem:M-positive}, It\^o's formula applied to
	$Y=M^{-1}$ yields
	\begin{equation}\label{eq:Y-rate-exact}
		\begin{aligned}
			\d Y(t)
			={}&
			\left[
			-\frac12
			-\frac{\mathcal E_M(t)}{M(t)^2}
			-\frac12\frac{\kappa_{\Gamma}(t)}{M(t)^2}
			+\frac{\Gamma(t)^2}{M(t)^3}
			\right]\d t
			-
			\frac{\Gamma(t)}{M(t)^2}\d W(t).
		\end{aligned}
	\end{equation}
Recall the quantities $\eta(t)$ and $	\overline\eta(t)$ defined in \eqref{eq:eta-rate-main}:
	\begin{equation*}
		\eta(t)
		\triangleq
		\frac{\mathcal E_M(t)}{M(t)^2},	\qquad
		\overline\eta(t)
		\triangleq
		\frac1{\tau^*-t}
		\int_t^{\tau^*}\eta(r)\d r.
	\end{equation*}
	We also define
		\begin{equation*}
		\mu_Y(t)
		\triangleq
		-\frac12\frac{\kappa_{\Gamma}(t)}{M(t)}
		+
		\left(\frac{\Gamma(t)}{M(t)}\right)^2,
		\qquad
		\nu_Y(t)
		\triangleq
		-\frac{\Gamma(t)}{M(t)}.
	\end{equation*}
	Then \eqref{eq:Y-rate-exact} becomes the following equation with multiplicative noise:
	\begin{equation}\label{eq:Y-rate-linear-form}
		\d Y(t)
		=
		\left[
		-\left(\frac12+\eta(t)\right)
		+\mu_Y(t)Y(t)
		\right]\d t
		+
		\nu_Y(t)Y(t)\d W(t).
	\end{equation}
	Notice that the nonlocal term has not been discarded: it is retained in the
	nonnegative, scale-invariant quantity $\eta(t)=\mathcal E_M(t)/M(t)^2$.
	
	The bounds \eqref{eq:Gamma-bound} and \eqref{eq:kappa-rate-two-sided} imply
	\begin{equation}\label{eq:mu-nu-rate-bound}
		|\nu_Y(t)| \le
		c_\sigma\bigl(1+V_{\infty}Y(t)\bigr),\qquad
		|\mu_Y(t)| \le
		\frac{C_{\sigma,4}}2\bigl(1+V_{\infty}Y(t)\bigr)
		+
		c_\sigma^2\bigl(1+V_{\infty}Y(t)\bigr)^2.
	\end{equation}
	In particular, $\mu_Y$ and $\nu_Y$ are uniformly bounded on every region
	where $M\ge1/2$.  On $\Omega_M$, one has $Y(t)\to0$ and therefore there is a
	path-dependent time $t_0<\tau^*$ such that
	\begin{equation}\label{eq:M-eventually-large}
		M(t)\ge1,
		\qquad t_0\le t<\tau^*.
	\end{equation}
	
	\begin{Lemma}
		\label{lem:terminal-integrating-factor}
		There exists a strictly positive continuous adapted process
		$\mathscr Z=(\mathscr Z(t))_{t\ge0}$ such that, almost surely on
		$\Omega_M$, the random variable $\mathscr Z(\tau^*)$ is finite and
		strictly positive and, for all $t<\tau^*$ sufficiently close to
		$\tau^*$,
		\begin{equation}\label{eq:Y-terminal-representation}
			Y(t)
			=
			\int_t^{\tau^*}
			\frac{\mathscr Z(r)}{\mathscr Z(t)}
			\left(\frac12+\eta(r)\right)\d r.
		\end{equation}
		Moreover,
		\begin{equation}\label{eq:Z-ratio-to-one}
			\lim_{t\uparrow\tau^*}
			\sup_{r\in[t,\tau^*]}
			\left|
			\frac{\mathscr Z(r)}{\mathscr Z(t)}-1
			\right|
			=0.
		\end{equation}
	\end{Lemma}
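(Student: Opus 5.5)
The plan is to construct $\mathscr Z$ as the stochastic exponential that exactly cancels the multiplicative terms in \eqref{eq:Y-rate-linear-form}. I first identify the coefficients formally. Suppose $\d\mathscr Z=\mathscr Z(\alpha\,\d t+\beta\,\d W)$. The product It\^o formula applied to \eqref{eq:Y-rate-linear-form} gives
\[
\d(\mathscr ZY)=\mathscr Z\Big[-\Big(\tfrac12+\eta\Big)+(\mu_Y+\alpha+\beta\nu_Y)Y\Big]\d t+\mathscr Z(\nu_Y+\beta)Y\,\d W .
\]
The $Y$-terms vanish exactly when $\beta=-\nu_Y$ and $\alpha=\nu_Y^2-\mu_Y$. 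This choice leads to
\[
\mathscr Z(t)=\exp\Big(\int_0^t\Big(\tfrac12\nu_Y^2-\mu_Y\Big)(r)\,\d r-\int_0^t\nu_Y(r)\,\d W(r)\Big).
\]

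The difficulty is that $\mu_Y,\nu_Y$ are defined only on $\mathcal I_{\tau^*}$ and are bounded only where $M$ is not small, whereas I need $\mathscr Z$ to be a globally defined continuous process so that $\mathscr Z(\tau^*)$ makes sense. I therefore truncate. I set $\tilde\nu\triangleq\nu_Y\mathbf 1_{\{t<\tau^*,\,M(t)\ge1/2\}}$ and $\tilde\mu\triangleq\mu_Y\mathbf 1_{\{t<\tau^*,\,M(t)\ge1/2\}}$, extended by zero as in Remark~\ref{Remark-progressive measurability}. These processes are progressively measurable. By \eqref{eq:mu-nu-rate-bound} with $Y\le2$, they are bounded by a deterministic constant depending only on $\sigma$ and $V_\infty$. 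I define $\mathscr Z$ by the formula above with $\tilde\mu,\tilde\nu$ in place of $\mu_Y,\nu_Y$. Then $\mathscr Z$ is a strictly positive, continuous, adapted process on $[0,\infty)$, since the $\d r$-integral and the It\^o integral of bounded integrands have continuous versions. In particular, on $\Omega_M$ the value $\mathscr Z(\tau^*)$ is finite and strictly positive, because $\tau^*<\infty$ there.

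Next I verify the cancellation on the terminal interval. On $\Omega_M$, \eqref{eq:M-eventually-large} gives $t_0<\tau^*$ with $M\ge1$ on $[t_0,\tau^*)$, so the truncation is inactive there. To work with stopping times rather than the path-dependent $t_0$, I consider the stopping times $\varsigma_k\triangleq\inf\{t\ge0:M(t)\ge k\}\wedge\tau^*$ together with the exit times $\rho_k\triangleq\inf\{t\ge\varsigma_k: M(t)\le 1/2\}\wedge\tau_{R}$, where $\tau_R$ is given in \eqref{eq:tau-R-blowup}. On the stochastic interval $[\varsigma_k,\rho_k]$ the product It\^o formula is legitimate and yields $\d(\mathscr ZY)=-\mathscr Z(\frac12+\eta)\,\d t$. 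On $\Omega_M$, for $k$ large one has $M\ge 1/2$ on all of $[\varsigma_k,\tau^*)$, so letting $R\to\infty$ shows that $\rho_k\uparrow\tau^*$. Hence, for $\varsigma_k\le t\le s<\tau^*$,
\[
\mathscr Z(s)Y(s)-\mathscr Z(t)Y(t)=-\int_t^s\mathscr Z(r)\Big(\tfrac12+\eta(r)\Big)\d r .
\]

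Finally I let $s\uparrow\tau^*$. On $\Omega_M$ we have $Y(s)\to0$, and $\mathscr Z(s)\to\mathscr Z(\tau^*)<\infty$ by continuity, so the left side tends to $-\mathscr Z(t)Y(t)$. The integrand on the right is nonnegative because $\eta\ge0$, so monotone convergence gives the integral over $[t,\tau^*]$, which is therefore finite. Dividing by $\mathscr Z(t)>0$ yields \eqref{eq:Y-terminal-representation}. The limit \eqref{eq:Z-ratio-to-one} follows because $\mathscr Z$ is continuous on the compact interval $[t_0,\tau^*]$ and strictly positive there, so it is uniformly continuous and bounded below near $\tau^*$. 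Hence $\sup_{r\in[t,\tau^*]}|\mathscr Z(r)-\mathscr Z(t)|/\mathscr Z(t)\to0$ as $t\uparrow\tau^*$.

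I expect the main obstacle to be the localisation bookkeeping. The coefficients live only on $\mathcal I_{\tau^*}$, the good region $M\ge1/2$ is reached only at a path-dependent time, and the terminal limit takes place at a random time. The truncation by zero extension is what allows $\mathscr Z$ to be defined beyond $\tau^*$ and evaluated at $\tau^*$, and I expect it to resolve this difficulty.
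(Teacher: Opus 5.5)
Your proposal is correct and follows the paper's proof essentially step for step. Both use the same stochastic-exponential integrating factor with truncated coefficients extended by zero after $\tau^*$, the same exact cancellation in the product It\^o formula on the terminal interval where $M$ is large, monotone convergence as the upper limit approaches $\tau^*$, and the same compactness argument for the ratio. The differences are cosmetic. You use an indicator of $\{M\ge 1/2\}$ instead of the smooth cutoff $\chi_\infty(M)$. You localise the product formula between the stopping times $\varsigma_k$ and $\rho_k$, which is if anything slightly more careful than working on $[t_0,\tau^*)$ with the path-dependent time $t_0$. Finally, you let $s\uparrow\tau^*$ directly using $Y(s)\to0$, instead of passing through the hitting times $\tau_n$ with $Y(\tau_n)=1/n$.
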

	
	\begin{proof}
		Choose $\chi_\infty\in C^\infty([0,\infty);[0,1])$ such that
		\[
		\chi_\infty(m)=0\quad\text{for }0\le m\le\frac12,
		\qquad
		\chi_\infty(m)=1\quad\text{for }m\ge1.
		\]
		Using the progressive extension by zero after $\tau^*$ (see Remark \ref{Remark-progressive measurability}), define, for
		$t\ge0$,
		\[
		\widehat\mu_Y(t)
		\triangleq
		\begin{cases}
			\chi_\infty(M(t))\mu_Y(t),&t<\tau^*,\\
			0,&t\ge\tau^*,
		\end{cases}
		\qquad
		\widehat\nu_Y(t)
		\triangleq
		\begin{cases}
			\chi_\infty(M(t))\nu_Y(t),&t<\tau^*,\\
			0,&t\ge\tau^*.
		\end{cases}
		\]
		Whenever $\chi_\infty(M(t))\neq0$, one has $M(t)>1/2$ and hence
		$Y(t)<2$. Therefore \eqref{eq:mu-nu-rate-bound} gives the deterministic
		bounds
		\[
		|\widehat\nu_Y(t)|
		\le
		c_\sigma(1+2V_\infty)
		\]
		and
		\[
		|\widehat\mu_Y(t)|
		\le
		\frac{C_{\sigma,4}}2(1+2V_\infty)
		+
		c_\sigma^2(1+2V_\infty)^2.
		\]
		Thus $\widehat\mu_Y$ and $\widehat\nu_Y$ are bounded progressively
		measurable processes. Define, for $t\ge0$,   
		\begin{equation}\label{eq:Z-rate-definition}
			\mathscr Z(t)
			\triangleq
			\exp\!\left[
			-\int_0^t\widehat\nu_Y(r)\d W(r)
			+
			\int_0^t
			\left(
			\frac12\widehat\nu_Y(r)^2-\widehat\mu_Y(r)
			\right)\d r
			\right].
		\end{equation}
		It\^o's formula gives
		\begin{equation}\label{eq:Z-rate-SDE}
			\d\mathscr Z(t)
			=
			\left[
			\widehat\nu_Y(t)^2-\widehat\mu_Y(t)
			\right]\mathscr Z(t)\d t
			-
			\widehat\nu_Y(t)\mathscr Z(t)\d W(t).
		\end{equation}
		The definition \eqref{eq:Z-rate-definition} shows directly that
		$\mathscr Z$ is a globally defined, strictly positive continuous adapted
		process.
		
		Let $\Omega_{\mathrm{full}}$ be a common event of probability one on
		which all localised semimartingale identities used above, the chosen
		continuous versions, and the continuous stochastic integral in
		\eqref{eq:Z-rate-definition} hold simultaneously. Fix
		\[
		\omega\in\Omega_M\cap\Omega_{\mathrm{full}}
		\]
		and suppress $\omega$ from the notation. By \eqref{eq:M-eventually-large},
		$\widehat\mu_Y=\mu_Y$ and $\widehat\nu_Y=\nu_Y$ on
		$[t_0,\tau^*)$.  Combining \eqref{eq:Y-rate-linear-form} and
		\eqref{eq:Z-rate-SDE} with the product It\^o formula, including the
		quadratic-covariation term, gives the exact cancellation
		\begin{equation}\label{eq:ZY-rate-differential}
			\d\bigl(\mathscr Z(t)Y(t)\bigr)
			=
			-\mathscr Z(t)
			\left(\frac12+\eta(t)\right)\d t,
			\qquad t_0\le t<\tau^*.
		\end{equation}
		Indeed, the coefficient of $\mathscr ZY\d t$ is
		\[
		\mu_Y+(\nu_Y^2-\mu_Y)-\nu_Y^2=0,
		\]
		and the two stochastic terms cancel as well.
		
		For every integer $n>M_0$, let
		\begin{equation*} 
			\tau_n
			\triangleq
			\inf\bigl\{
			t\in[0,\tau^*):M(t)\ge n
			\bigr\}.
		\end{equation*}
		These are stopping times by the progressive-extension convention (see Remark \ref{Remark-progressive measurability}). On
		$\Omega_M$, continuity, $M(0)=M_0<n$, and
		$M(t)\to\infty$ as $t\uparrow\tau^*$ imply, for every integer
		$n>M_0$,
		\[
		\tau_n<\tau^*,
		\qquad
		M(\tau_n)=n,
		\qquad
		Y(\tau_n)=\frac1n.
		\]
		The sequence $\{\tau_n\}_{n\ge1}$ is nondecreasing. Let
		$\tau_\infty\triangleq\lim_n\tau_n\le\tau^*$. If
		$\tau_\infty<\tau^*$, continuity of $M$ at $\tau_\infty$ gives
		\[
		M(\tau_n)\longrightarrow M(\tau_\infty)<\infty,
		\]
		which contradicts $M(\tau_n)=n\to\infty$. Hence
		$\tau_n\uparrow\tau^*$.

		Fix $t\in[t_0,\tau^*)$ and take $n$ sufficiently large that
		$\tau_n>t$.  Integrating \eqref{eq:ZY-rate-differential} from $t$ to
		$\tau_n$ yields
		\begin{equation}\label{eq:ZY-rate-stopped}
			\mathscr Z(t)Y(t)
			=
			\frac{\mathscr Z(\tau_n)}n
			+
			\int_t^{\tau_n}
			\mathscr Z(r)
			\left(\frac12+\eta(r)\right)\d r.
		\end{equation}
		
		Since $\mathscr Z$ is globally defined, continuous, and strictly positive,
		and since $\tau^*<\infty$ on $\Omega_M$,
		\begin{equation*} 
			\mathscr Z(t)\longrightarrow
			\mathscr Z(\tau^*)\in(0,\infty)
			\qquad\text{as }t\uparrow\tau^*.
		\end{equation*}
		In particular, $\mathscr Z(\tau_n)/n\to0$.  Since the integrand in
		\eqref{eq:ZY-rate-stopped} is nonnegative, monotone convergence as
		$n\to\infty$ gives \eqref{eq:Y-terminal-representation}.  It also
		shows that
		\[
		\int_t^{\tau^*}\eta(r)\d r<\infty
		\]
		for every $t$ sufficiently close to $\tau^*$. Finally, the strict positivity and continuity of $\mathscr Z$ on the
		compact terminal interval $[t_0,\tau^*]$ imply
		\[
		\sup_{r\in[t,\tau^*]}
		\left|
		\frac{\mathscr Z(r)}{\mathscr Z(t)}-1
		\right|
		\le
		\frac{
			\sup_{r_1,r_2\in[t,\tau^*]}|\mathscr Z(r_1)-\mathscr Z(r_2)|
		}{
			\inf_{r\in[t,\tau^*]}\mathscr Z(r)
		}
		\longrightarrow0,
		\]
		which proves \eqref{eq:Z-ratio-to-one}.
	\end{proof}
	
	\begin{proof}[Proof of Theorem~\ref{Thm-blowup-rate}]
		Work on $\Omega_M$ and write
		\[
		\ell_*(t)\triangleq\tau^*-t,
		\qquad
		\overline\eta(t)
		=
		\frac1{\ell_*(t)}
		\int_t^{\tau^*}\eta(r)\d r.
		\]
		By Lemma~\ref{lem:terminal-integrating-factor},
		\begin{equation}\label{eq:Y-rate-average}
			\frac{Y(t)}{\ell_*(t)}
			=
			\frac1{\ell_*(t)}
			\int_t^{\tau^*}
			\frac{\mathscr Z(r)}{\mathscr Z(t)}
			\left(\frac12+\eta(r)\right)\d r.
		\end{equation}
		Set
		\[
		\mathfrak e_Z(t)
		\triangleq
		\sup_{r\in[t,\tau^*]}
		\left|
		\frac{\mathscr Z(r)}{\mathscr Z(t)}-1
		\right|.
		\]
		Then $\mathfrak e_Z(t)\to0$ by \eqref{eq:Z-ratio-to-one}.  For $t$
		sufficiently close to $\tau^*$, $\mathfrak e_Z(t)<1$, and
		\begin{equation}\label{eq:Y-rate-squeeze}
			\bigl(1-\mathfrak e_Z(t)\bigr)
			\left(\frac12+\overline\eta(t)\right)
			\le
			\frac{Y(t)}{\ell_*(t)}
			\le
			\bigl(1+\mathfrak e_Z(t)\bigr)
			\left(\frac12+\overline\eta(t)\right).
		\end{equation}
		
		Since $\overline\eta(t)\ge0$, the lower bound in
		\eqref{eq:Y-rate-squeeze} gives
		\[
		\liminf_{t\uparrow\tau^*}
		\frac{Y(t)}{\tau^*-t}
		\ge\frac12.
		\]
		Using $M(t)=1/Y(t)$, we obtain
		\[
		\limsup_{t\uparrow\tau^*}
		M(t)(\tau^*-t)
		\le2,
		\]
		which proves \eqref{eq:type-I-rate-main}.
		
		If \eqref{eq:eta-minus-main} holds, then the same lower bound yields
		\[
		\liminf_{t\uparrow\tau^*}
		\frac{Y(t)}{\tau^*-t}
		\ge
		\frac12+\eta_{\mathrm{av}}^-.
		\]
		Taking reciprocals proves \eqref{eq:strict-type-I-rate-main}. A strictly
		positive lower limit of the terminal Ces\`aro averages yields a strict
		improvement of the universal upper bound $2$.
		
		Finally, suppose that \eqref{eq:eta-star-main} holds with
		$\eta_*<\infty$.  From \eqref{eq:Y-rate-average},
		\begin{align*}
			\left|
			\frac{Y(t)}{\ell_*(t)}
			-
			\left(\frac12+\overline\eta(t)\right)
			\right|
			&\le
			\mathfrak e_Z(t)
			\left(\frac12+\overline\eta(t)\right)
			\longrightarrow0.
		\end{align*}
		Therefore
		\[
		\frac{Y(t)}{\tau^*-t}
		\longrightarrow
		\frac12+\eta_*.
		\]
		The limit on the right is strictly positive, so taking reciprocals
		gives \eqref{eq:exact-rate-main}.
	\end{proof}
	
	\begin{Remark}
		The strict inequality in \eqref{eq:strict-type-I-rate-main} cannot be
		concluded from $\mathcal E_M(t)\ge0$ alone.  Within the event $\Omega_M$, no additional assumption on the terminal average of
		$\eta(t)=\mathcal E_M(t)/M(t)^2$  is required for the upper bound with constant $2$.
		If
		$\overline\eta(t)\to0$, then \eqref{eq:exact-rate-main} gives the
		borderline constant
		\[
		M(t)(\tau^*-t)\longrightarrow2.
		\]
		If instead $\overline\eta(t)\to\eta_*$ for some
		$\eta_*\in(0,\infty)$, then the limiting constant is strictly smaller
		than $2$.
	\end{Remark}
	
	\begin{Remark}
		The preceding proof does not require $\sigma$ to be affine, the
		characteristic $z_0(t)$ to have a limit, or the initial profile to be
		symmetric. The explicit stochastic coefficients in the reciprocal equation
		enter the terminal representation through the multiplicative factor
		$\mathscr Z(r)/\mathscr Z(t)$, which converges uniformly to one on the
		shrinking interval $[t,\tau^*]$. Thus this explicit multiplicative factor
		does not contribute separately to the first-order asymptotic coefficient.
		This does not make the rate independent of the noise: the processes $v$,
		$z_0$, $\mathcal E_M$, and hence $\eta=\mathcal E_M/M^2$, remain path
		dependent. Any remaining dependence of the first-order coefficient on the
		noise is encoded implicitly in the terminal Ces\`aro behaviour of $\eta$.
		When that average converges to a finite limit $\eta_*$, the limiting
		coefficient is $1/(\frac12+\eta_*)$.
		
		The result remains conditional on $\Omega_M$.  Extending it to the whole
		event $\{\tau^*<\infty\}$ requires a separate single-point-dominance
		argument showing that breakdown necessarily forces
		$M(t)=\Lambda v(t,z_0(t))$ to diverge along the transported maximum.
	\end{Remark}

	\section*{Acknowledgements}

D.A.-O. is supported by Grant RYC2023-045563-I funded by
MICIU/AEI/10.13039/501100011033.  

D.A.-O. and R.G.-B. are funded by the
project ``An\'alisis Matem\'atico Aplicado y Ecuaciones Diferenciales''
(AMAED), Grant PID2022-141187NB-I00, funded by
MCIN/AEI/10.13039/501100011033/FEDER, UE. This publication is part of the
project PID2022-141187NB-I00 funded by
MCIN/AEI/10.13039/501100011033. 

Y.T. M. is funded by the National Natural Science Foundation of China under project 12526619.  Y.T. M. is also supported by the
Research Development Fund (RDF-24-01-063) from  Xi'an Jiaotong-Liverpool University.

H.T.  is supported by the National Natural Science Foundation of China under projects  24AAA00530 and 12531007.

 D.A.-O., R.G.-B., and H.T. acknowledge the hospitality of the Tianyuan Mathematics Research Center in Kunming, where part of this work was carried out during the workshop ``Nonlocality in Stochastic Evolutionary Systems.''

	\setlength{\bibsep}{1ex}

\end{document}